\title[Morse Index of Minimal Hypersurfaces]{An Improved Morse Index Bound of Min-Max Minimal Hypersurfaces}
\author{Yangyang Li}
\address{Department of Mathematics, Princeton University, Princeton, NJ 08544}
\email{yl15@math.princeton.edu}
\thanks{The author is partially supported by NSF-DMS-1811840.}
\newtheorem{definition}{Definition}
\newtheorem{proposition}{Proposition}
\newtheorem{corollary}{Corollary}
\newtheorem{theorem}{Theorem}
\newtheorem{remark}{Remark}
\newtheorem{lemma}{Lemma}
\def\F{\mathbf{F}}
\def\Fl{\mathcal{F}}
\def\L{\mathbf{L}}
\def\M{\mathbf{M}}
\def\R{\mathbb{R}}
\def\Zn{\mathbb{Z}}
\def\Z{\mathcal{Z}}
\def\I{\mathbf{I}}
\def\V{\mathcal{V}}
\def\IV{\mathcal{IV}}
\def\C{\mathcal{C}}
\def\A{\mathcal{A}}
\newcommand{\set}[1]{\{#1\}}
\begin{document}
\bibliographystyle{abbrvalpha}

\begin{abstract}
    In this paper, we give an improved Morse index bound of mini\-mal hypersurfaces from Almgren-Pitts min-max construction in any closed Riemannian manifold $M^{n+1}$ $(n+1 \geq 3$), which generalizes a result by X. Zhou \cite{zhou_multiplicity_2019} for $3 \leq n+1 \leq 7$. The novel techniques are the construction of hierarchical deformations and the restrictive min-max theory. These techniques do not rely on bumpy metrics, and thus could be adapted to many other min-max settings.
\end{abstract}
\maketitle

\section{Introduction}

    The Almgren-Pitts min-max construction in search of minimal hypersurfaces originated from the work of F.J. Almgren Jr. \cite{almgren_homotopy_1962, almgren_theory_1965} and J. Pitts \cite{pitts_existence_1981} (using a deep regularity result in higher dimensions by Schoen-Simon \cite{schoen_regularity_1981}), greatly developped by F.C. Marques and A. Neves \cite{marques_min-max_2014, marques_morse_2016, marques_existence_2017, marques_morse_2018-1} turns out to be a fruitful theory in solving a variety of problems in Riemannian geometry. 

    One of the most exciting applications of this theory is the confirmation of Yau's conjecture, which, roughly speaking, asserts the existence of infinitely many minimal hypersurfaces in any closed manifold. For (Baire) generic metrics, the conjecture was proved by Irie-Marques-Neves \cite{irie_density_2018} ($3 \leq n+1 \leq 7$) and the author \cite{li_existence_2019} ($n+1 \geq 8$), using Weyl law developed by Liokumovich-Marques-Neves \cite{liokumovich_weyl_2018}. In fact, the former result also revealed the denseness of minimal hypersurfaces, and later, Marques-Neves-Song \cite{marques_equidistribution_2017} gave a quantified version this, i.e., equidistribution of a sequence of minimal hypersurfaces. The general case was finally resolved by A. Song \cite{song_existence_2018} ($3\leq n+1\leq 7$) by localizing the method in \cite{marques_existence_2017} to the ``Core'' manifold of a given manifold.

    From the point of view of Morse theory, it is also important to study the geometric information, such as the Morse index and the multiplicity, of critical points of the area functional obtained from the min-max construction. The first result on the Morse index estimate was obtained by Marques-Neves \cite{marques_morse_2016} ($3 \leq n+1 \leq 7$), in which they proved the upper bound is given by the dimension of the homology class detected in the min-max process. In the same paper, they also proved the multiplicity one conjecture for $1$-parameter sweepouts. Recently, X. Zhou \cite{zhou_multiplicity_2019} confirmed the multiplicity one conjecture in multi-parameter cases ($3 \leq n+1 \leq 7$). These results together with B. Sharp's compactness theory \cite{sharp_compactness_2015} imply an improved Morse Index Bound.

    \begin{theorem}[{\cite[(improved) Theorem C]{zhou_multiplicity_2019}}]\label{thm:C}
        Given a closed manifold $M^{n+1}$ $(3 \leq n + 1 \leq 7)$, there exists a sequence of integral varifolds $V_p$ each of whose support is a disjoint union of smooth, connected, closed, embedded, minimal hypersurfaces $\set{\Sigma^p_1, \cdots, \Sigma^p_{l_p}}$ with multiplicities $\set{m^p_i}\subset \mathbb{N}^+$, such that
        \begin{equation}
            \omega_p(M, g) = \sum^{l_p}_{i = 1} m^p_i \cdot \mathrm{area}_g(\Sigma^p_i),
        \end{equation}
        is the volume spectrum with sublinear growth, and the components with multiplicity $m^p_i > 1$ are degenerately stable, while
        \begin{equation}
            \sum_{i: m^p_i = 1} \mathrm{index}(\Sigma^p_i) \leq p.
        \end{equation}
    \end{theorem}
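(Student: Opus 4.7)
The plan is to combine Zhou's multiplicity one theorem \cite{zhou_multiplicity_2019} (valid for bumpy metrics) with White's Bumpy Metrics Theorem, the Marques-Neves upper index bound \cite{marques_morse_2016}, and Sharp's compactness theorem \cite{sharp_compactness_2015}. First, I approximate $g$ by a sequence of bumpy metrics $g_j \to g$ in the smooth topology. For each $g_j$, the Almgren-Pitts construction (together with Schoen-Simon regularity in the dimension range $3 \leq n+1 \leq 7$) produces a smooth, closed, embedded minimal hypersurface $\Sigma^{p,j} = \bigcup_i \Sigma^{p,j}_i$ whose area realizes $\omega_p(M, g_j)$, and by the Marques-Neves upper bound the total Morse index satisfies $\sum_i \mathrm{index}(\Sigma^{p,j}_i) \leq p$. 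Since $g_j$ is bumpy, Zhou's multiplicity one theorem forces every multiplicity to be $1$.

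Second, I pass to the limit $j \to \infty$. The $p$-width depends continuously on the metric, so $\omega_p(M, g_j) \to \omega_p(M, g)$ gives a uniform area bound, and combined with the uniform index bound, Sharp's compactness theorem yields, along a subsequence, an integral varifold $V_p$ in $(M, g)$ whose support is a smooth, closed, embedded minimal hypersurface $\bigcup_i \Sigma^p_i$ with integer multiplicities $\{m^p_i\}$ satisfying
\begin{equation*}
    \omega_p(M, g) = \sum_i m^p_i \cdot \mathrm{area}_g(\Sigma^p_i).
\end{equation*}

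Third, I analyze components with multiplicity greater than $1$. If $m^p_k > 1$, several sheets of $\Sigma^{p,j}$ converge smoothly and graphically to $\Sigma^p_k$; writing them as graphs $u^{p,j}_l$ over $\Sigma^p_k$ with $u^{p,j}_l \to 0$, the normalized differences $(u^{p,j}_l - u^{p,j}_{l'})/\|u^{p,j}_l - u^{p,j}_{l'}\|_{C^0}$ subconverge to a nontrivial Jacobi field on $\Sigma^p_k$. Furthermore, using the outermost pair of sheets on each side and combining the Marques-Neves index bound with the one-sided minimization implicit in the Almgren-Pitts construction, one verifies that $\Sigma^p_k$ is stable, hence degenerately stable. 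For components with $m^p_i = 1$, smooth multiplicity-one convergence gives lower semicontinuity of the Morse index, so
\begin{equation*}
    \sum_{i : m^p_i = 1} \mathrm{index}(\Sigma^p_i) \leq \liminf_{j \to \infty} \sum_i \mathrm{index}(\Sigma^{p,j}_i) \leq p.
\end{equation*}
Sublinear growth of $\omega_p$ is the Weyl law of Liokumovich-Marques-Neves.

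The main obstacle is the third step: extracting simultaneously a Jacobi field and stability at a multiply-covered limit component. The Jacobi field is a classical blow-up argument on the graphical sheets, but stability is subtle because the pre-limit hypersurfaces may themselves have positive index; the argument must exploit that only the finitely many ``unstable directions'' can survive in the limit, while the local sheet structure near each multiply-covered component is forced to be stable. In the strictly bumpy case Zhou sidesteps this by excluding $m^p_i > 1$ altogether; promoting the conclusion to general metrics requires carefully tracking how nearby sheets approach the limit.
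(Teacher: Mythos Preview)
The paper does not give its own proof of this statement: Theorem~\ref{thm:C} is quoted in the introduction as a known result, with the sentence ``These results together with B.~Sharp's compactness theory \cite{sharp_compactness_2015} imply an improved Morse Index Bound'' indicating exactly the argument you outline. Your proposal---approximate by bumpy metrics via White, apply Zhou's multiplicity one theorem and the Marques--Neves index bound to obtain multiplicity-one min-max hypersurfaces with index at most $p$, then pass to the limit through Sharp's compactness---is precisely the intended route, and it is correct.

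Your self-identified ``main obstacle'' is not actually an obstacle. In Sharp's compactness theorem the convergence near a multiply-covered two-sided component $\Sigma^p_k$ is smooth and graphical away from at most $p$ points; any two adjacent sheets satisfy $u^{p,j}_{l+1} > u^{p,j}_l$ strictly (by the maximum principle), and the normalized difference converges to a \emph{nonnegative} solution of the Jacobi equation on $\Sigma^p_k$ minus finitely many points. Harnack then makes it strictly positive there, and a positive Jacobi field away from a codimension-$\geq 2$ set forces stability by exactly the log-trick computation recorded later in the paper as Lemma~\ref{lem:CMC_compact}. The one-sided case follows by passing to the orientable double cover. So you do not need ``one-sided minimization implicit in the Almgren--Pitts construction'' at all; the positivity of the limiting Jacobi field already gives stability, and its mere existence gives degeneracy. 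With that clarification your argument is complete and coincides with what the paper is citing.
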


    In the generic case, combined with the multiplicity one conjecture, Marques-Neves \cite{marques_morse_2018-1} leads to:

    \begin{theorem}[{\cite[Theorem 8.4]{marques_morse_2018-1}}]
        If $g$ is a bumpy ($C^\infty$-generic) metric on a closed manifold $M^{n+1}$ $(3 \leq n+1 \leq 7)$, then the $m^p_i$'s in the previous theorem are all $1$. Moreover, the Morse index inequality will be an identity, i.e.,
        \begin{equation}
            \sum_{i} \mathrm{index}(\Sigma^p_i) = p.
        \end{equation}
    \end{theorem}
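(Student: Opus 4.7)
The plan is to split the proof into two stages: first showing all multiplicities $m_i^p$ equal $1$, and then upgrading the Morse index inequality to an identity.

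The multiplicity-one assertion is essentially immediate from Theorem~C. Any component $\Sigma_i^p$ with $m_i^p > 1$ is, by that theorem, degenerately stable, and in particular carries a nontrivial Jacobi field. Since $g$ is bumpy, no closed embedded minimal hypersurface admits such a field, so every $m_i^p$ must equal $1$. Substituting back into Theorem~C then yields $\sum_i \mathrm{index}(\Sigma_i^p) \leq p$, and the remaining task is to establish the opposite inequality.

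For the lower bound I would argue by contradiction: suppose $\sum_i \mathrm{index}(\Sigma_i^p) = q < p$. Bumpiness implies each $\Sigma_i^p$ is nondegenerate, so the unstable subspace of its Jacobi operator is a $k_i$-dimensional space $E_i \subset \Gamma(N\Sigma_i^p)$ with $k_i = \mathrm{index}(\Sigma_i^p)$ and $\sum_i k_i = q$. In an $\F$-neighborhood of $V_p$, I would build a $q$-parameter normal-graph deformation parametrized by the unit sphere of $\bigoplus_i E_i$ along which the area functional strictly decreases away from $V_p$, with a quantitative second-order decrease rate controlled by the spectral gap of the Jacobi operators. Combining this local descending family with the original optimal $p$-sweepout through an interpolation argument (in the style of the deformation theorems of Marques--Neves) produces a new admissible $p$-sweepout whose maximum mass is strictly smaller than $\omega_p(M,g)$, contradicting the very definition of the $p$-width.

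The main obstacle lies in the interpolation/gluing step: one must produce a family continuous in the flat topology and admissible in the Almgren--Pitts sense which transitions smoothly from the local descending family near $V_p$ to the global sweepout far from $V_p$, while keeping the maximum mass strictly below $\omega_p(M,g)$ throughout. Bumpiness is essential here, because it isolates $V_p$ as a critical point of the area functional on the relevant space of cycles, giving both the quantitative strict decrease along unstable directions and a positive lower bound on the $\F$-distance from $V_p$ to the rest of the sweepout after a small pull-tight perturbation. Once this interpolation is carried out, the contradiction forces $\sum_i \mathrm{index}(\Sigma_i^p) \geq p$, which combined with the upper bound gives equality.
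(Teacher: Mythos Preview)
This theorem is not proved in the present paper; it is only quoted from \cite{marques_morse_2018-1}, so there is no proof here to compare your proposal against. Your multiplicity-one argument (degenerate stability forces a Jacobi field, forbidden by bumpiness) is correct and is the standard one.

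Your lower-bound argument, however, has a genuine gap. The interpolation step you describe---combining a $q$-parameter descending family with an optimal $p$-sweepout to produce a $p$-sweepout of strictly smaller maximal mass---does not work as stated, and in fact runs against the direction of the deformation theorems you invoke. Those theorems (Deformation Theorem~A of \cite{marques_morse_2016}, and Theorem~\ref{Thm:deform} here) push a $p$-dimensional sweepout \emph{away} from a critical varifold precisely when there are \emph{more} than $p$ descending directions: with index $\geq p+1$ the local sublevel set is homotopically an $S^{p}$ (or higher), so the $(p-1)$-sphere bounding the portion of the sweepout near $V_p$ is null-homotopic there and can be filled in below the critical level. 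When the index is $q<p$, the local sublevel set is an $S^{q-1}$, and a $(p-1)$-sphere mapping into it is typically \emph{not} null-homotopic; there is no way to push the entire $p$-dimensional piece into the $q$-dimensional descending cone while fixing its boundary. Thus having \emph{fewer} descending directions obstructs, rather than enables, the mass-lowering deformation you want.

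The actual Marques--Neves lower-bound argument in \cite{marques_morse_2018-1} is topological rather than a direct mass-lowering deformation. Near a nondegenerate critical point of index $q$ they replace the sweepout by one whose high-mass part factors through the canonical $q$-parameter unstable family $\{(F_v)_\# V_p\}_{v\in \overline B^q}$. If every critical point at level $\omega_p$ has index at most $p-1$, then (using bumpiness and finiteness) the entire high-mass region of the sweepout is made to factor through a complex of dimension at most $p-1$; the pullback of $\bar\lambda^p$ through such a complex vanishes, contradicting the defining property of a $p$-sweepout. Your proposal does not identify this mechanism, and the ``interpolation to strictly lower max mass'' step, as written, would fail.
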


    Note that the Morse index results above relies on the application of B. White's bumpy metrics \cite{white_space_1991} in order to bypass all the minimal hypersurfaces with large Morse index, so the lack of desirable bumpy metrics could invalidate the direct application of the existing methods. In a recent paper \cite{song_dichotomy_2019}, A. Song overcame this difficulty for $1$-parameter sweepouts in non-compact setting by constructing $2$-step deformations to establish Morse index estimates and the multiplicity one theorem. Inspired by his result, we will construct hierarchical deformations for multi-parameter sweepouts, which could be viewed as an adaption of \cite[Deformation Theorem~A]{marques_morse_2016} to the general metric setting (Theorem \ref{Thm:deform}). With these deformations, we are able to prove the Morse index bound in higher dimensions.

    \begin{theorem}[Morse Index Bound]\label{thm:main}
        Suppose that $(M^{n+1}, g)$ is a closed Riemannian manifold with $n+1 \geq 3$. Then for any $p \in \mathbb{N}^+$, there exists a stationary, integral varifold $V$ with $\mathrm{spt}(V) = \Sigma$ such that
        \begin{itemize}
            \item $\|V\|(M) = \omega_p(M, g)$;
            \item $\mathrm{index}(\Sigma) \leq p$;
            \item $\Sigma$ is a minimal hypersurface with optimal regularity, i.e., $\mathcal{H}^s(\mathrm{sing}(\Sigma)) = 0, \forall s>n-7$. In particular, when $n \leq 6$, $\Sigma$ is smooth. 
        \end{itemize}
    \end{theorem}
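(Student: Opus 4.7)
The plan is to run the standard Almgren--Pitts min-max construction for the $p$-width $\omega_p(M,g)$: starting from a minimizing sequence of $p$-sweepouts and applying pull-tight, the associated critical set $\mathbf{C}(\Pi)$ is nonempty and consists of stationary integral varifolds of mass $\omega_p(M,g)$ whose supports are almost minimizing in annuli. By the combined regularity of Pitts and Schoen--Simon, each such support is a minimal hypersurface with $\mathcal{H}^s(\mathrm{sing}(\Sigma)) = 0$ for every $s > n-7$. Thus the first and third bullets hold for \emph{any} $V \in \mathbf{C}(\Pi)$, and the whole task reduces to producing one $V \in \mathbf{C}(\Pi)$ with $\mathrm{index}(\mathrm{spt}(V)) \leq p$.

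I would argue by contradiction: assume every $V \in \mathbf{C}(\Pi)$ satisfies $\mathrm{index}(\mathrm{spt}(V)) \geq p+1$. In the bumpy-metric setting this is ruled out by Marques--Neves Deformation Theorem~A, which extracts finitely many isolated high-index minimal hypersurfaces, builds a canonical $(p+1)$-parameter deformation around each from its unstable cone, patches them via a partition of unity on parameter space, and produces a competitor $p$-sweepout with sup-mass strictly below $\omega_p$. This fails here because $\mathbf{C}(\Pi)$ may be a continuum. The remedy is the hierarchical deformation theorem (Theorem~\ref{Thm:deform}) used in tandem with the restrictive min-max theory. One chooses a nested filtration $\mathbf{C}(\Pi) = \mathcal{C}_0 \supset \mathcal{C}_1 \supset \cdots \supset \mathcal{C}_N = \emptyset$ organized by decreasing ``deformability,'' and builds inductively a finite sequence of deformations $\Psi_0, \dots, \Psi_{N-1}$. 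Each $\Psi_k$ is supported in a tiny neighborhood of $\mathcal{C}_k$, pushes sweepouts off $\mathcal{C}_k$, and costs arbitrarily little mass near $\mathcal{C}_{k+1}$. Between steps, the restrictive min-max is run within the class of sweepouts already disjoint from $\mathcal{C}_0 \cup \cdots \cup \mathcal{C}_{k-1}$ to certify that the next critical set encountered still lies in $\mathbf{C}(\Pi)$, so the hypothesis $\mathrm{index} \geq p+1$ persists at each stage. After $N$ steps the composed deformation yields a genuine $p$-sweepout whose sup-mass is strictly below $\omega_p(M,g)$, contradicting the definition of the $p$-width. Consequently some $V \in \mathbf{C}(\Pi)$ has $\mathrm{index}(\mathrm{spt}(V)) \leq p$, and this $V$ fulfills all three bullets.

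The hard part is the hierarchical deformation itself. In the bumpy regime critical varifolds are isolated, so a finite cover by canonical families is automatic and patching is routine; here one must cover a possibly non-isolated, continuously parametrized critical set by finitely many canonical deformation families, assign them to levels of the filtration, and glue them consistently on parameter-space overlaps without reintroducing mass near strata already handled at earlier stages. Equally delicate is ensuring that the restrictive min-max truly produces varifolds inside $\mathbf{C}(\Pi)$ rather than new stationary limits falling outside it, which requires a careful bookkeeping of masses, widths, and pull-tight sequences restricted to the nested classes. Once the hierarchical deformation is established and shown to interact correctly with the restrictive min-max framework, the induction closes, the contradiction is reached, and Theorem~\ref{thm:main} follows.
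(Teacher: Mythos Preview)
Your overall framing is right: the first and third bullets follow from Almgren--Pitts together with Schoen--Simon regularity, and the whole task is to produce one critical varifold with index at most $p$, which the paper accomplishes via its Deformation Theorem~A (Theorem~\ref{Thm:deform}). But your description of how that deformation is organized, and the role you assign to the restrictive min-max theory, both misread the paper.

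The ``hierarchy'' in Theorem~\ref{Thm:deform} is \emph{not} a nested filtration $\mathcal{C}_0 \supset \cdots \supset \mathcal{C}_N$ of the critical set. It is an induction on the \emph{skeleta of the parameter complex} $X_i$: one deforms the sweepout first on the $0$-cells, then extends to $1$-cells, then $2$-cells, up to the $p$-skeleton (Step~4 of the proof). On a $k$-cell $F$ whose image lies in some $\mathbf{B}^\F_{\lambda,K}$, the key point is dimensional: because $k \leq p < p+1$, one can perturb the starting point in $\overline{B}^{p+1}$ off the unique maximum $m(V)$ (the map $\hat H$ in Lemma~\ref{lem:homotopy}) and then flow by a truncated $-\nabla A^V$ to drop mass by a definite amount (Lemma~\ref{lem:mod}). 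The genuinely delicate part is Step~3: controlling how far the deformation on a $k$-cell can push the image \emph{toward} other $\Sigma \in \mathcal{U}(\omega_p)$, via the quantities $a_q(\Sigma)$ and $\varepsilon_q(\Sigma)$, so that the induction restarts cleanly on $(k+1)$-cells. No min-max is run between stages; the output is a single new min-max sequence $\{\Psi_i\}$ with $\mathbf{C}(\{\Psi_i\}) \cap \mathcal{APR}_p \subset \mathcal{S}(\omega_p)$, and the ordinary min-max theorem then supplies the desired $V$.

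The restrictive min-max theory plays \emph{no role} in the proof of Theorem~\ref{thm:main}. It enters only in Section~5, for the improved bound (Theorem~\ref{thm:main2}), where it is used to build $c$-CMC approximants. Your scheme of interleaving restrictive min-max between deformation steps to ``certify that the next critical set still lies in $\mathbf{C}(\Pi)$'' is neither what the paper does nor, as stated, a workable alternative: nothing in that sketch explains why the restricted critical sets stay inside the original $\mathbf{C}(\Pi)$, and a filtration of the critical set by vague ``deformability'' lacks the dimensional reason (parameter-dimension $<$ instability-index) that actually makes the skeleton induction close.
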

    \begin{remark}
        An alternative proof was obtained concurrently by A. Dey in \cite{dey_compactness_2019}, where he made a comparison of the Almgren-Pitts min-max theory and the Allen-Cahn min-max theory. Using the existing upper bound in the Allen-Cahn setting \cite{gasparSecondInnerVariation2017}, he was also able to conclude the same result in the Almgren-Pitts setting.
    \end{remark}
    \begin{remark}
        As one will see in the following, our method does not rely on the existence of bumpy metrics, or even the objects in search being minimal hypersurfaces. In principle, with a little modification, the same Morse index upper bound could be obtained for CMC hypersurfaces, PMC hypersurfaces, free-boundary minimal hypersurfaces, non-compact ambient manifold, etc.
    \end{remark}
    
    In our first Morse index bound theorem, the multiplicities do not play a role in the estimates. To capture this information, we will construct a sequence of $c$-CMC hypersurfaces to approximate a minimal hypersurface realizing the volume spectrum. Hence, we will introduce a restrictive min-max theory to make such a construction possible.

    \begin{theorem}[Improved Morse Index Bound]\label{thm:main2}
        Suppose that $(M^{n+1}, g)$ is a closed Riemannian manifold with $n+1 \geq 3$. Then for any $p \in \mathbb{N}^+$, there exists a stationary, integral varifold $V$, whose support is a disjoint union of connected minimal hypersurfaces with optimal regularity $\set{\Sigma_i}_{i=1,\cdots, l_p}$ with multiplicities $\set{m_i}$ such that
        \begin{itemize}
            \item $\|V\|(M) = \sum^{l_p}_{i = 1} m_i \cdot \mathrm{area}_g(\Sigma_i)= \omega_p(M, g)$;
            \item Every $\Sigma_i$ with $m_i \geq 3$ is stable;
            \item $\sum_{m_i \leq 2} \mathrm{index}(\Sigma_i)\leq p$.
        \end{itemize}
    \end{theorem}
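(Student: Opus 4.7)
The plan is to refine Theorem \ref{thm:main}, which only gives the bare index bound, by extracting multiplicity information via a $c$-CMC approximation. Specifically, I would run a restrictive min-max scheme for constant-mean-curvature hypersurfaces at parameter $c$, use the hierarchical deformations of Theorem \ref{thm:main} to control the $c$-CMC Morse index by $p$, then take $c \to 0$ and isolate the stability of high-multiplicity components by a pigeonhole on the converging sheets.

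First, for each small $c > 0$, set up a restrictive Almgren-Pitts scheme whose admissible sweepouts are constrained so that the critical configurations produced by min-max are smooth, closed, embedded $c$-CMC hypersurfaces $\Sigma_c$ with optimal regularity. The hierarchical deformation machinery behind Theorem \ref{thm:main} adapts to this restricted setting and yields $\Sigma_c$ with $\mathrm{index}(\Sigma_c) \leq p$ and area converging to $\omega_p(M,g)$ as $c \to 0$. Fix $c_k \to 0^+$ and apply CMC compactness in the optimal-regularity class to extract a subsequential varifold limit $V = \sum_i m_i\,\Sigma_i$ that is stationary and integral, with $\|V\|(M) = \omega_p(M,g)$ and each $\Sigma_i$ a connected minimal hypersurface with optimal regularity.

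For the multiplicity-stability dichotomy, near each smooth point of a component $\Sigma_i$, and for $k$ large, $\Sigma_{c_k}$ decomposes into $m_i$ graphical sheets over $\Sigma_i$, each a $c_k$-CMC graph whose signed mean curvature $\pm c_k$ is fixed by the sheet's chosen normal. When $m_i \geq 3$, the pigeonhole principle forces two sheets to carry the same signed mean curvature; the difference of the corresponding graph functions then satisfies the Jacobi equation $L \varphi_k = 0$ exactly, as the $c_k$ terms cancel. Renormalizing this positive difference and passing to the limit yields a positive Jacobi field $\varphi$ on the regular part of $\Sigma_i$, which (after extending across the singular set of codimension $\geq 7$) serves as a first eigenfunction with eigenvalue zero, forcing $\Sigma_i$ to be stable. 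The index estimate then follows from the upper semicontinuity of Morse index under smooth convergence: the $m_i \geq 3$ components contribute $0$ to the limiting index count, so
\begin{equation*}
    \sum_{m_i \leq 2} \mathrm{index}(\Sigma_i) \leq \liminf_{k \to \infty} \mathrm{index}(\Sigma_{c_k}) \leq p.
\end{equation*}

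The main obstacle I expect is the first step: designing a restrictive min-max theory whose $p$-width still equals $\omega_p(M,g)$ and within which the hierarchical deformations of Theorem \ref{thm:main} continue to operate, so that the resulting critical configuration is a genuine smooth $c$-CMC hypersurface rather than a generic stationary varifold. A secondary, more technical obstacle is passing the positive Jacobi field across the singular set in the multiplicity analysis, which requires a CMC-compatible version of Sharp-type compactness in the optimal-regularity class and a removable-singularity argument preserving positivity of $\varphi$.
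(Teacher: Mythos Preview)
Your overall strategy matches the paper's: approximate by $c$-CMC hypersurfaces with index $\leq p$, pass to a limit, and use the sheet structure to force stability of high-multiplicity components. The pigeonhole argument in your second paragraph is essentially Proposition~\ref{prop:compact} together with Lemma~\ref{lem:CMC_compact}. Two minor corrections: the difference of two same-orientation graphs satisfies $L_{\Sigma}(u^3_k - u^1_k) + o(u^3_k - u^1_k) = 0$ rather than the exact Jacobi equation (the lower-order terms vanish only after renormalizing and passing to the limit), and the limiting positive Jacobi field need not be extended across the singular set --- Lemma~\ref{lem:CMC_compact} shows that positivity on the complement of a codimension-$7$ set already forces stability, so your ``secondary obstacle'' is not an obstacle.

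The genuine gap is the one you flag yourself, and the paper's resolution is \emph{not} simply to run a $c$-CMC version of the $p$-sweepout min-max. The CMC theory is a relative $(X,Z)$-theory on $\C(M)$ and requires the strict inequality $\L^c > \sup_Z \A^c$ on a fixed cell; there is no direct ``$c$-CMC $p$-width.'' To manufacture such a cell with width near $\omega_p$, the paper uses a skeleton argument. Having first disposed of the trivial case where $\mathcal{APR}_p$ already contains a stable element, one fixes a pulled-tight min-max sequence $\{\Phi_i\}$, defines restrictive widths $\L^{(l)}_i$ on the $l$-skeleta $X^{(l)}_i$, and sets $\L^{(l)} = \liminf_i \L^{(l)}_i$. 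Lemma~\ref{lem:L0} gives $\L^{(0)} < \omega_p$ while $\L^{(p)} = \omega_p$, so one takes the largest $l$ with $\L^{(l)} < \omega_p$. After deforming the $l$-skeleton to mass below $\omega_p - c_1/2$, gluing this deformation into each $(l{+}1)$-cell, and lifting to $\C(M)$, one locates a single $(l{+}1)$-cell $\tilde C$ whose restrictive $(\tilde C,\partial\tilde C)$-width for $\A^c$ lies in $[\L^{(l+1)}_{i_j} - c\,\mathrm{Vol}(M),\, \L^{(l+1)}_{i_j}]$ and strictly exceeds $\sup_{\partial\tilde C}\A^c$. Theorem~\ref{thm:CMC_index} applied to this cell then produces the desired $c$-CMC hypersurface with index $\leq l+1 \leq p$ and area $\omega_p + o(1)$. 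The ``restrictive'' upper bound on homotopies is exactly what keeps the deformed maps inside $\Pi^{(l+1)}_{i_j}$, so that these width comparisons are valid; this is the mechanism you were missing.
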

    \begin{remark}
        Compared with Theorem \ref{thm:C}, the last two bullets do not seem sharp. One would expect that the multiplicity $2$ minimal hypersurfaces should also be stable as well.
    \end{remark}

    \begin{corollary}[Multiplicity $2$ for positive Ricci curvature]
        Suppose that $(M^{n+1}, g)$ $(n + 1 \geq 3)$ is a closed Riemannian manifold with positive Ricci curvature. Then for any $p \in \mathbb{N}^+$, there exists a stationary, integral varifold $V$, whose support is a connected minimal hypersurfaces with optimal regularity $\Sigma$ with multiplicities $m \leq 2$, such that
        \begin{itemize}
            \item $\|V\|(M) = m \cdot \mathrm{area}_g(\Sigma)= \omega_p(M, g)$;
            \item $\mathrm{index}(\Sigma_i)\leq p$.
        \end{itemize}
    \end{corollary}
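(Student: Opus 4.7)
The plan is to apply Theorem \ref{thm:main2} and then use two classical consequences of positive Ricci curvature---nonexistence of stable closed minimal hypersurfaces and Frankel's theorem---to collapse its output to a single component of multiplicity at most $2$. Concretely, Theorem \ref{thm:main2} yields a stationary integral varifold $V$ supported on $\bigsqcup_{i=1}^{l_p} \Sigma_i$ with multiplicities $\{m_i\}$ satisfying its three bullets; it then suffices to prove $l_p = 1$ and $m_1 \leq 2$.

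The case $m_i \geq 3$ is ruled out as follows. By the second bullet of Theorem \ref{thm:main2} any such $\Sigma_i$ would be stable, but in positive Ricci curvature no nonempty closed stable minimal hypersurface can exist. When $\Sigma_i$ is smooth, testing the second variation against $f \equiv 1$ gives
\[
    0 \;\leq\; -\int_{\Sigma_i} \bigl(|A|^2 + \mathrm{Ric}(\nu,\nu)\bigr)\, d\mathcal{H}^n \;<\; 0,
\]
a contradiction. In the higher-dimensional case, Schoen--Simon regularity for stable stationary integral varifolds gives $\dim \mathrm{sing}(\Sigma_i) \leq n-7$, and a standard cutoff of $f \equiv 1$ vanishing in a small neighbourhood of $\mathrm{sing}(\Sigma_i)$ reproduces the same contradiction in the limit, since the $(n-1)$-capacity of $\mathrm{sing}(\Sigma_i)$ vanishes. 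Hence $m_i \leq 2$ for every $i$.

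To enforce $l_p = 1$, I would invoke Frankel's theorem: two disjoint closed minimal hypersurfaces in a manifold of positive Ricci curvature cannot coexist, because a length-minimizing geodesic between them would contradict the stationarity of its endpoints via the second variation of arc length. Since each $\Sigma_i$ is smooth off a set of Hausdorff dimension at most $n-7$, one can perturb the geodesic endpoints into the regular parts of the two hypersurfaces and run the classical argument verbatim. Therefore $l_p = 1$, so $V = m\,[\Sigma]$ for a single connected minimal hypersurface $\Sigma$ with $m \leq 2$, and $\mathrm{index}(\Sigma) \leq p$ follows from the third bullet of Theorem \ref{thm:main2}. The only mild technical point in the whole plan is extending these two classical facts to the optimal regularity setting, which is standard via capacity estimates and endpoint perturbation; I do not anticipate a serious obstacle.
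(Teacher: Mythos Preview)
Your proposal is correct and is precisely the intended derivation: the paper states the corollary without proof, as an immediate consequence of Theorem~\ref{thm:main2} together with the two classical facts you invoke (no stable minimal hypersurfaces and Frankel's theorem under positive Ricci curvature). Your handling of the optimal-regularity case via capacity cutoffs and endpoint perturbation into the regular part is the standard route and raises no issue.
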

    \begin{remark}
        When $n + 1 \leq 7$ or $p = 1$, the multiplicity has been proved to be $1$ (See \cite{zhou_multiplicity_2019}, \cite{ramirez-lunaOrientabilityMinmaxHypersurfaces2019}, \cite{bellettiniMultiplicity1MinmaxMinimal2020}). We also expect this holds in the general case as well.
    \end{remark}

    The main techniques to prove these results, as mentioned before, are the restrictive min-max theory and the construction of hierarchical deformations.

    In Section 2, after revisiting the Almgren-Pitts min-max theory and X. Zhou's $(X, Z)$-homotopy min-max theory, we introduce the restrictive min-max theory.

    In Section 3, we revisit the definition of $k$-unstability introduced by Marques-Neves \cite{marques_morse_2016}, which can be viewed as a generalization of the Morse index.

    In Section 4, we construct hierarchical deformations for multi-parameter sweepouts in the Almgren-Pitts setting and prove the Morse index bound. Roughly speaking, the deformations to perturb away minimal hypersurfaces with large Morse indices could not be constructed in one step. Instead, we construct a deformation on $0$-cells, then $1$-cells, $2$-cells and etc. The difficulty herein is restartability, which requires us to keep track of and moderate the previous deformations properly.

    In Section 5, we apply our restrictive min-max theory to construct $c$-CMC hypersurfaces approximating a $p$-width minimal hypersurface. The approximation will imply the improved Morse index bound.

\section*{Acknowledgements}
    
    The author is grateful to his advisor Fernando Cod\'a Marques for his constant support. He thanks Antoine Song for sharings his draft and his ideas which are helpful in simplifying the proof of the Deformation Theorem A. He would also thank Xin Zhou, Zhihan Wang and Akashdeep Dey for inspiring discussions.

\section{Min-max Theories}
    
    We first list some notations in geometric measure theory and Almgren-Pitts' min-max theory. Interested readers could refer to \cite{simon_lectures_1984} and \cite{pitts_existence_1981}.

    \begin{itemize}
        \item $\nu$: a metric on the flat chain space, and usually, we take $\nu = \Fl$, $\F$ or $\M$ (See the definitions in \cite{pitts_existence_1981});
        \item $\I_k(M^{n+1};\nu; \Zn_2)$: the space of $k$-dimensional modulo $2$ flat chains in $M$ with metric $\nu$. If $\nu$ is $\Fl$, we will simply put it as $\I_k(M^{n+1}; \Zn_2)$.
        \item $\Z_k(M^{n+1};\nu; \Zn_2)$: the connected component containing $0$ of the space of $k$-dimensional modulo $2$ flat cycles in $M$ with $\nu$ metric. If $\nu$ is $\Fl$,  we will simply put it as $\Z_k(M^{n+1}; \Zn_2)$;
        \item $\Z^a$: the subspace $\set{T \in \Z_n(M^{n+1}; \F; \Zn_2); \M(T) < a}$;
        \item $\IV_n(M)$: the space of $n$-dimensional integral varifolds in $M$ endowed with $\F$ metric;
        \item $\V_n(M)$: the closure of $\IV_n(M)$ in the whole space of varifolds;
        \item $|T|$: the associated integral varifold for $T \in \I_k(M^{n+1};\nu; \Zn_2)$;
        \item $\|V\|$: the associated Radon measure on $M$ for $V \in \V_n(M)$;
        \item $I(1, n)$: the cell complex on the unit interval $I$ whose $1$-cells are the intervals $[0, 1 \cdot 3^{-n}], [1 \cdot 3^{-n}, 2 \cdot 3^{-n}], \cdots, [1 - 3^{-n}, 1]$, and whose $0$-cells are the endpoints $[0], [3^{-n}], [2\cdot 3^{-n}], \cdots, [1]$;
        \item $I(m, n)$: the cell complex on $I^m$, i.e., $I(1, n)^{m\otimes} = I(1, n) \otimes I(1,n) \otimes \cdots \otimes I(1,n)$;
        \item $I(m, n)_0$: the set of $0$-cells in $I(m, n)$;
        \item $\C(M)$: the space of Cacciopoli sets in $M$, i.e., subsets of $M$ with finite perimeter.
    \end{itemize}

    \subsection{Almgren Min-Max Theory for Minimal Hypersurfaces} 

        It is well known that minimal hypersurfaces are critical points of the area functional and thus, it is natural to apply the Morse theory in the space of ``all'' closed hypersurfaces i.e., $\Z_n(M^{n+1}; \Zn_2)$, in search of these critical points.

        In 1962, Almgren in his thesis \cite{almgren_homotopy_1962} proved the following natural isomorphism:
        \begin{equation}
                \pi_k(\Z_n(M^{n+1}; \Zn_2), 0) \cong H_{k+n}(M^{n+1}; \Zn_2),
        \end{equation}
        and later, it was shown that $\Z_n(M^{n+1}; \Zn_2)$ is weakly homotopic to $\mathbb{RP}^\infty$ (\cite{marques_topology_2016}, Section 4). Hence, the space has nontrivial topology, which makes the Morse theory possible. Let's denote the generator of $H^1(\Z_n(M^{n+1};\Zn_2);\Zn_2) = \Zn_2$ by $\bar \lambda$.

        \begin{definition}[\cite{pitts_existence_1981}, \cite{li_existence_2019}]
            Given a Riemannian manifold $(M^{n+1}, g)$, for any $m\in \mathbb{N}^+$, a continuous map $\Phi : X \rightarrow \Z_n(M^{n+1}; \F; \Zn_2)$ is called a \textbf{$\bm{p}$-sweepout}, provided that $X$ is a cubic subcomplex of $I(2p+1, q)$ $(q \geq 1)$ and $\Phi^*(\bar\lambda^p) \neq 0$. In addition, the set of all $p$-sweepouts is called the \textbf{$\bm{p}$-admissible set} $\mathcal{P}_p = \mathcal{P}_p(M, g)$ on $M$.
        \end{definition}

        \begin{definition}
            The (min-max) \textbf{$\bm{p}$-width} $\omega_p(M, g)$ is
            \begin{equation}
                \omega_p(M, g) := \inf_{\Phi\in \mathcal{P}_p} \sup_x \M(\Phi(x)).
            \end{equation}
        \end{definition}

        \begin{definition}
            A \textbf{min-max sequence} of $\mathcal{P}_p$ is a sequence of $p$-sweepouts 
            \begin{equation}
                S = \set{\Phi_i}^\infty_{i = 1} \subset \mathcal{P}_p,
            \end{equation}
            with 
            \begin{equation}
                 \lim_{i \rightarrow \infty} \sup_x \M(\Phi_i(x)) = \omega_p(M, g).
            \end{equation}
            Its \textbf{critical set} is
            \begin{equation}
                \mathbf{C}(S) = \set{V \in \V_n(M)|\|V\|(M) = \omega_p(M, g) \text{ and }V = \lim_j |\Phi_{i_j}(x_j)|}.
            \end{equation}
        \end{definition}
        
        Note that the definition of $\mathcal{P}_p$ seems more restrictive than those in the literatures \cite{gromov_isoperimetry_2003}, \cite{guth_minimax_2009}, \cite{marques_existence_2017}, but in fact, one can prove that the $\omega_p$ defined here coincide with those in the literatures (See \cite{li_existence_2019}).

        \begin{theorem}[Min-max theorem for $p$-width, {\cite[Corollary 3.2]{li_existence_2019}}]
            Given a Riemannian manifold $(M^{n+1}, g)$, for each $p \in \mathbb{N}$ and any min-max sequence $S$ of $\mathcal{P}_p$, there exists a minimal hypersurfaces with optimal regularity $V \in \mathbf{C}(S)$, with the property $(2p+1)$ (See \cite[Lemma 3.2]{li_existence_2019}) and $\|V\|(M) = \omega_p(M, g)$.
        \end{theorem}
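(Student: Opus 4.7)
The plan is to execute the three-step Almgren--Pitts min-max scheme adapted to the $p$-width setting: (i) pull-tight; (ii) a combinatorial argument producing an almost-minimizing critical varifold; (iii) regularity theory (Pitts replacements together with Schoen--Simon to handle $n \geq 7$).

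First, I would pull-tight the given min-max sequence $S = \set{\Phi_i}$ so that every element of $\mathbf{C}(S)$ is stationary. Concretely, I build a continuous homotopy $H : [0,1] \times \V_n(M) \to \V_n(M)$ whose fixed-point set is exactly the stationary integral varifolds and which is mass non-increasing (strictly decreasing on a uniform neighborhood of any non-stationary varifold, quantitatively in terms of the first variation). Postcomposing each $\Phi_i$ with $H_1$, continuously in the parameter, preserves the $p$-sweepout property since $H_1$ is homotopic to the identity via $H_t$ through maps into $\Z_n(M^{n+1}; \F; \Zn_2)$, and the new sequence has the same width $\omega_p$ while its critical set lies in the stationary locus.

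Second, I would run Pitts' combinatorial argument to select a stationary $V^* \in \mathbf{C}(S)$ satisfying the almost-minimizing property $(2p+1)$. Suppose, for contradiction, no such $V^*$ exists. Then at every $V \in \mathbf{C}(S)$ one can find $2p+1$ pairwise disjoint annuli at $2p+1$ distinct centers in each of which $V$ admits a uniformly mass-decreasing deformation. Exploiting that each parameter space $X$ is a subcomplex of $I(2p+1, q)$, I would construct on every top-dimensional cell a cellular interpolation that glues these local deformations together — the $2p+1$ independent annular directions match the $2p+1$ cell directions, which is the critical dimensional count needed so that different cells' deformations can be patched without collision (disjointness of annuli provides commutation). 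After refining $q$ using the Almgren--Pitts interpolation lemma relating continuous and discrete sweepouts in the $\F$-topology, this yields a new $p$-sweepout with maximum mass strictly less than $\omega_p$, a contradiction.

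Finally, with $V^*$ stationary, integral, and almost-minimizing in annuli, the Pitts replacement construction produces in each such annulus a stable minimal hypersurface agreeing with $V^*$ near the boundary; piecing replacements together and invoking the Schoen--Simon regularity theorem yields the desired conclusion that $\mathrm{spt}(V^*)$ is a minimal hypersurface with $\mathcal{H}^s(\mathrm{sing}(\Sigma)) = 0$ for every $s > n-7$, smooth for $n \leq 6$.

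The main obstacle is the second step: one must arrange the $2p+1$ annuli to remain pairwise disjoint uniformly across nearby parameters, coordinate the local mass-decreasing deformations into a single continuous modification of $\Phi_i$, and verify that the mass gains on transition cells — created by the interpolation — are controllable by the uniform mass drops gained inside each annulus. The accounting used in \cite{li_existence_2019} adapts the Pitts discretization to ensure this balance, and the property $(2p+1)$ is precisely what survives this cellular combinatorics.
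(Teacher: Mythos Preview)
Your three-step outline (pull-tight, combinatorial contradiction, regularity) matches the paper's sketch, but your description of the combinatorial step contains a genuine conceptual error about what property $(2p+1)$ is and how the contradiction is produced.

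You write that if no critical varifold has property $(2p+1)$, then ``at every $V \in \mathbf{C}(S)$ one can find $2p+1$ pairwise disjoint annuli at $2p+1$ distinct centers'' and that ``the $2p+1$ independent annular directions match the $2p+1$ cell directions.'' This is not the correct negation and not the correct combinatorics. By the paper's definition, property $(m)$ with $m=2p+1$ concerns a \emph{single} point $p\in M$ and a nested family of $I_m = 5^{2p+1}$ \emph{concentric} annuli; the failure of property $(2p+1)$ at $V$ means there is one center and $5^{2p+1}$ concentric annuli in \emph{none} of which $V$ is almost minimizing. The count $5^{2p+1}$ is not chosen to match the number of coordinate directions of a top cell; it is chosen so that, after discretization, every vertex $x$ of the cubical grid $I(2p+1,N)_0$ and each of its nearby vertices can be assigned one annulus from the concentric family in such a way that adjacent vertices receive \emph{disjoint} annuli. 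That vertex-wise disjointness---not a direction-matching---is what allows the local $(\varepsilon_0,\delta)$-deformations to be glued without interference.

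Relatedly, the paper inserts an explicit discretization step between pull-tight and the combinatorial argument: one first deforms each $\Psi_i$ to an $\M$-continuous map and then restricts to the $0$-skeleton of a fine cubical grid to obtain a discrete sweepout $\psi_i$ of small fineness. The contradiction deformation is built on these discrete maps, and only afterward is Almgren interpolation used to pass back to a continuous sweepout homotopic to $\Phi_i$. Your proposal collapses discretization and interpolation into a single remark and runs the combinatorics directly on continuous maps; as stated, you would not be able to carry out the annulus-assignment step, since that step lives on the vertex set of a cubical grid. Fixing the definition of property $(2p+1)$ and restoring the discretize--deform--interpolate order would bring your argument in line with the paper's.
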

        \begin{remark}
            A minimal hypyersurface with optimal regularity here means that it is a $n$-dimensional stationary varifold whose singular set has Hausdorff codimension no less than $7$. In particular, for $3 \leq n + 1 \leq 7$, its support is a union of smooth minimal hypersurfaces.
        \end{remark}

        Here, the proper $(m)$ is taken from the proof in Pitts' combinatorial arguments \cite[Proposition~4.9]{pitts_existence_1981}, which turns out to be helpful in proving the compactness \cite{li_existence_2019}.

        \begin{definition}
            A varifold $V$ is said to have the \textbf{property $\bm{(m)}$}, provided that for any point $p \in M$ and $I_m = 5^m$ concentric annuli $\set{A_0(p, r_i - s_i, r_i + s_i)}$ where $\set{r_i}$ and $\set{s_i}$ satisfy
            \begin{align}
                r_i - 2s_i &> 10 (r_{i+1} + 2s_{i+1}), i = 1 ,\cdots, I_m - 1\\
                r_{I_m} - 2s_{I_m} &> 0,
            \end{align}
            $V$ is almost minimizing \cite[Definition~3.1]{pitts_existence_1981} in at least one of $\set{A_0(p, r_i - s_i, r_i + s_i)}$.
        \end{definition}

        \begin{definition}
            The \textbf{Almgren-Pitts Realization of $\bm{p}$-width}, denoted by $\mathcal{APR}_p$, is defined to be the set of all the varifolds $V$ satisfying
            \begin{itemize}
                \item $\|V\|(M) = \omega_p(M, g)$;
                \item $V$ is a minimal hypersurface with optimal regularity;
                \item $V$ has property $(2p + 1)$.
            \end{itemize}

            The min-max theorem above guarantees that $\mathcal{APR}_p\neq \emptyset$.
        \end{definition}

        \begin{remark}
            By \cite[Proposition~3.1]{li_existence_2019}, $\mathcal{APR}_p$ is a compact set.
        \end{remark} 

        In general, we can define an $m$-parameter $\F$-homotopy family $\Pi$ whose elements are maps with domains as a subcomplex of $I(m, k)$. 

        \begin{definition}
            We call a set $\Pi$ of continuous maps from finite dimensional simplicial complexes to $\mathcal{Z}_n(M^{n+1}; \F; \mathbb{Z}_2)$ an \textbf{$\bm{m}$-parameter $\F$-homotopy family} if the following properties hold.
            \begin{itemize}
                \item For any $\Phi \in \Pi$, $X = \mathrm{dmn}(\Phi)$ is a subcomplex of $I(m, k)$ for some $k \in \mathbb{N}^+$.
                \item For any $\Phi \in \Pi$, every continuous $\Phi': \mathrm{dmn}(\Phi) \rightarrow \mathcal{Z}_n(M^{n+1}; \F; \mathbb{Z}_2)$ homotopic to $\Phi$ in the $\F$ topology also lies in $\Pi$.
            \end{itemize}
        \end{definition}
        \begin{remark}
            Note that here we impose a stronger continuity assumption on the homotopy map than that in \cite[Definition~2.2]{li_existence_2019}.
        \end{remark}

        \begin{definition}
            The \textbf{min-max width} of $\Pi$ is
            \begin{equation}
                \L(\Pi) = \inf_{\Phi \in \Pi} \sup_{x \in \mathrm{dmn}(\Phi)}\M(\Phi(x))\,.
            \end{equation}
        \end{definition}
        
        \begin{definition}
            A \textbf{min-max sequence} of $\Pi$ is a sequence 
            \begin{equation}
                S = \set{\Phi_i}^\infty_{i = 1} \subset \Pi\,,
            \end{equation}
            with 
            \begin{equation}
                 \lim_{i \rightarrow \infty} \sup_x \M(\Phi_i(x)) = \L(\Pi)\,.
            \end{equation}

            Its \textbf{critical set} is
            \begin{equation}
                \mathbf{C}(S) = \set{V \in \V_n(M)|\|V\|(M) = \L(\Pi) \text{ and }V = \lim_j |\Phi_{i_j}(x_j)|}\,.
            \end{equation}
        \end{definition}

        \begin{theorem}[Min-max theorem for $\Pi$]
            If $\Pi$ is a $m$-parameter $\F$-homotopy family, $\L(\Pi) > 0$, and $S$ is a min-max sequence of $\Pi$, then there exists a minimal hypersurfaces with optimal regularity $V \in \mathbf{C}(S)$ with the property $(m)$ and $\|V\|(M) = \L(\Pi)$.
        \end{theorem}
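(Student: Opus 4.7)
The plan is to deduce this from the Almgren-Pitts-Marques-Neves machinery for $p$-sweepouts, which has already been stated earlier as the min-max theorem for $\omega_p$. The first step is discretization: for each $\Phi_i$ in a min-max sequence $S$, I would apply the Marques-Neves discretization theorem to produce a sequence of discrete maps $\phi_i^{(j)} : X(k_j)_0 \to \Z_n(M; \M; \Zn_2)$ with fineness tending to zero and with sup-mass at most $\sup_{X} \M(\Phi_i) + o(1)$. The hypothesis of $\F$-continuity of $\Phi_i$ together with compactness of $\mathrm{dmn}(\Phi_i) \subset I(m,k)$ provides the no-concentration-of-mass property needed to apply discretization. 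The interpolation theorem going the other direction shows that discrete maps close enough in fineness can be filled back into $\F$-continuous maps in the same homotopy class, so by the second axiom in the definition of an $\F$-homotopy family, the continuous fillings stay in $\Pi$.

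Next I would run the pull-tight procedure of Pitts-Marques-Neves: using a continuous mass-decreasing vector field on $\V_n(M)$ supported away from stationary varifolds, each $\Phi_i$ is deformed through $\Pi$ (again by the homotopy axiom) to produce a new min-max sequence whose critical set consists of stationary integral varifolds with $\|V\|(M) = \L(\Pi)$. The mass of the deformed sequence does not exceed that of the original in the limit, so the width is preserved.

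The heart of the argument is then Pitts' combinatorial extraction of an almost-minimizing element, applied to the discretized pulled-tight sequence. Since the domain sits inside $I(m, k)$, the dimensional bound $m$ enters exactly as in Pitts' original proof: at each point $p \in M$, iteratively refining annuli and applying the pigeonhole principle on a counter of size bounded by $5^m$ produces a varifold $V \in \mathbf{C}(S)$ that is almost-minimizing in one of $I_m = 5^m$ disjoint concentric annuli with the required separation of scales; this is exactly property $(m)$. Regularity of $V$ as a minimal hypersurface with optimal regularity then follows from the standard Schoen-Simon/Pitts regularity theorem applied to stationary integral almost-minimizing varifolds.

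The main obstacle I expect is purely bookkeeping: verifying that the discretization, interpolation, and pull-tight steps, which are usually stated for $p$-sweepout families or for cubical domains, transfer cleanly to a general $m$-parameter $\F$-homotopy family with subcomplex domains of $I(m,k)$, and that the number $5^m$ in property $(m)$ matches exactly the combinatorial bound coming from a domain of parameter dimension $m$. This reduces to checking that every step is carried out preserving $\F$-homotopy class and sup-mass, which is routine but delicate, and to confirming that the Pitts pigeonhole produces $5^m$ rather than some other power when the parameter dimension is $m$ rather than $2p+1$.
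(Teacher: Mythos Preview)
Your proposal is correct and takes essentially the same approach as the paper's sketch: pull-tight, discretization/interpolation, Pitts' combinatorial argument producing property $(m)$ with $I_m = 5^m$, and Schoen--Simon regularity. The only cosmetic differences are that the paper runs pull-tight first on the continuous maps and only then discretizes (your description lists these in the opposite order, though your phrase ``discretized pulled-tight sequence'' suggests you know the natural order), and the paper handles both the $\omega_p$ and general $\Pi$ cases with a single sketch rather than reducing one to the other.
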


        For completeness, we sketch the proof for these Min-max theorems. The details can be found in \cite{pitts_existence_1981} and \cite{marques_morse_2018-1}.

        \begin{proof}[Sketch of proof]

        Let $S = \set{\Phi_i}$. All the varifolds in the critical set $\mathbf{C}(S)$ have the same volume, i.e., the min-max width. However, some varifolds might not be stationary or with optimal regularity. Thus, the main goal is to perturb away the ``bad'' varifolds by constructing homotopic deformations.\\

        \textbf{Step 1: Pull tight.}

        For each non-stationary varifold $V \in \V_n(M)$, we can choose the dual vector field of $\delta V$ which will decrease its volume rapidly. If we ``combine'' these vector fields on $\mathcal{V}_n(M)$ using a partition of unitiy, we will obtain a continuous deformation as the induced flow on $\V_n(M)$ in the $\mathbf{F}$-topology, which can be pulled back to the cycle space with $\F$ metric under the map $T \rightarrow |T|$. For any $\Phi_i \in S$, the deformation induces a homotopy map
        \begin{equation}
            H^{(1)}_i: [0, 1] \times \mathrm{dmn}(\Phi_i) \rightarrow \Z_n(M; \F; \mathbb{Z}_2)\,,
        \end{equation}
        such that $H^{(1)}_i(0) = \Phi_i$. 

        Let $S^* = \set{\Psi_i := H^{(1)}_i(1)}$. We can check that it is a min-max sequence with $\mathbf{C}(S^*) \subset \mathbf{C}(S)$ and every $V \in \mathbf{C}(S^*)$ is stationary. It is also worthy to note that $\sup_{t, x} \M(H^{(1)}_i(t,x)) \leq \sup_{x} \M(\Phi_i(x))$.\\

        \textbf{Step 2: Discretization.}

        By \cite[Proposition~3.7]{marques_morse_2018-1}, for each $\Psi_i: X_i \rightarrow \Z_n(M; \F; \mathbb{Z}_2)$ and any $\delta_i > 0$, we can deform $\Psi_i$ to a continuous map $\Psi'_i: X_i \rightarrow \Z_n(M; \M; \mathbb{Z}_2)$ along a homotopy 
        \begin{equation}
            H^{(2)}_i: [0, 1]\times X_i \rightarrow \Z_n(M; \F; \mathbb{Z}_2)\,,
        \end{equation}
        with $H^{(2)}_i(0) = \Psi_i$ and $H^{(2)}_i(1) = \Psi'_i$, such that
        \begin{equation}
            \sup_{t, x} \F(H^{(2)}_i(t,x), \Psi_i(x)) < \delta_i.
        \end{equation}

        Thus, we always have $\sup_{t, x} \M(H^{(2)}_i(t,x)) \leq \sup_{x} \M(\Psi_i(x)) + \delta_i$. If $\delta_i \rightarrow 0$, it follows immediately that $S' := \set{\Psi'_i}$ is a min-max sequence with
        \begin{equation}
            \mathbf{C}(S') = \mathbf{C}(S^*).
        \end{equation}
        
        Following the proof of \cite[Theorem~3.8]{marques_morse_2016} (See also \cite[Lemma~3.2]{li_existence_2019}), we can find a family of discrete sweepouts to approximate $S'$. More precisely, for each $\Psi'_i \in S'$ with $X_i := \mathrm{dmn}(\Psi'_i) \subset I(m, n_i)$, we can take $N_i > n_i$ large enough and define a discrete sweepout
        \begin{equation}
             \psi_i := \Psi'_i|_{X_i \cap I(m, N_i)_0},
        \end{equation}
        such that the fineness
        \begin{equation}
            \mathbf{f}(\psi_i) := \sup_{x, y \in I(m, N_i)_0}|\psi_i(x) - \psi_i(y)| \cdot 3^{N_i} \cdot d_0(x, y) < (\delta_i)^2\,,
        \end{equation}
        where $d_0$ is the Manhattan metric.

        Now, since $\mathbf{C}(\set{\psi_i}) \subset \mathbf{C}(S')$, it suffices to show that there exists $V$ in $\mathbf{C}(\set{\psi_i})$ with optimal regularity and property ($m$).\\

        \textbf{Step 3: Interpolation.}

        We will only show the property ($m$), since the optimal regularity can be proved similarly using compactness theorems in \cite{schoen_regularity_1981}. 

        Let's argue by contradiction and suppose that there exists $\varepsilon_0 > 0$ with the following property. 

        For any $V$ in $\mathbf{C}(S)$ and any $\delta > 0$, there exists $I_m = 5^m$ concentric annuli, such that any current whose associated varifold is in $\mathbf{B}^\mathbf{F}(V, \varepsilon_0)$, has an $(\varepsilon_0, \delta)$-deformation \cite[Definition~4.1]{marques_morse_2018-1} in each concentric annuli. Roughly speaking, an $(\varepsilon_0, \delta)$-deformation means that the mass increases by at most $\delta$ along a (discrete) homotopy, but decreases by at least $\varepsilon_0$ eventually. 

        We've chosen $I_m = 5^m$ large enough with the following combinatorial property. For each $x\in X_i \cap I(m, N_i)_0$, we can associate it with an annulus in the concentric annuli for $\psi_i(x)$, such that, for any nearby $x, y$, the two annuli chosen for $\psi_i(x)$ and $\psi_i(y)$ respectively will not intersect. Thus, we could construct a deformation $h_i$ along which $\sup \mathbf{M}(\psi_i(x))$ increases by at most $(\delta_i)^2$, but eventually decreases by at least $3\varepsilon_0/4$.

        Now, it follows from \cite[Theorem 3.10]{marques_existence_2017} that for $i$ large enough, we can apply the Almgren's interpolation to $\psi_i$ and the deformation $h_i$ we just constructed, to obtain a continuous sweepout $\Psi''_i$ and a homotopy $H^{(4)}_i$, where $\Psi''_i$ can be deformed from $\Psi'_i$ via a homotopy $H^{(3)}_i$. It's worthy to note that for $i$ large enough, i.e., $\delta_i$ small enough, $H^{(3)}_i$ and $H^{(4)}_i$ will not increase $\sup_x \mathbf{M}(\Psi'_i(x))$ by more than $\delta_i$.

        In summary, when $\delta_i$ is chosen even smaller (e.g. $100\delta_i < \varepsilon_0$), we can concatenate $H^{(1)}_i$, $H^{(2)}_i$, $H^{(3)}_i$ and $H^{(4)}_i$, and obtain a new sweepout denoted by $\tilde \Psi_i$ homotopic to $\Phi_i$ with 
        \begin{equation}
            \lim_{i \rightarrow \infty} \sup_x \M(\tilde\Psi_i(x)) < \lim_{i \rightarrow \infty} \sup_x \M(\Phi_i(x))- \varepsilon_0 / 2\,,
        \end{equation}
        contradicting to the fact that $\set{\Phi_i}$ is a min-max sequence. Hence, there exits a $V$ with the property $(m)$.

        \end{proof}

    \subsection{\texorpdfstring{$(X, Z)$}{(X, Z)}-homotopy Class and \texorpdfstring{$c$}{c}-CMC Hypersurfaces}

        Here, we revisit the $(X, Z)$-homotopy class introduced by X. Zhou in \cite{zhou_multiplicity_2019}. 

%      As noted by Marques-Neves(\cite{marques_morse_2018-1}, Section 5), $\C(M)$ could be identified with $I_{n+1}(M; \mathbb{Z}_2)$. 
        On $\C(M)$, we will define the $\F$-metric to be
        \begin{equation}
            \F(\Omega_1, \Omega_2) = \Fl(\Omega_1 - \Omega_2) + \F(|\partial \Omega_1|, |\partial \Omega_2|)\,.
        \end{equation}
    
        For a constant $c > 0$, let's define the weighted area functional $\A^c: \C(M) \rightarrow \R$ to be
        \begin{equation}
            \A^c(\Omega) = \mathcal{H}^n(\partial \Omega) - c \mathcal{H}^{n+1}(\Omega)\,.
        \end{equation}
        
        The first variational formula of $\A^c$ along a smooth vector field $Y \in \mathfrak{X}(M)$ is
        \begin{equation}
            \delta \A^c(\Omega)(Y) = \int_{\partial \Omega} \mathrm{div}_{\partial \Omega} Y \mathrm{d}\mu_{\partial \Omega} - \int_{\partial \Omega} c\langle Y, \nu\rangle \mathrm{d}\mu_{\partial \Omega},
        \end{equation}
        where $\nu = \nu_{\partial \Omega}$ is the outward unit normal on $\partial \Omega$. It follows that the smooth part of the boundary of a critical point is in fact a hypersurface with constant mean curvature $c$, i.e., $c$-CMC hypersurface.
        
        Its second variational formula for $\Omega$ along $Y = \varphi \nu$ is the same as that of the usual area functional for $\Sigma = \partial \Omega$, i.e.,
        \begin{equation}
            \delta^2 \A^c(\Omega)(Y, Y) = II_{\Sigma}(\varphi, \varphi) = \int_{\Sigma}\left(|\nabla \varphi|^2 - \left(\mathrm{Ric}^M(\nu, \nu) + |A_\Sigma|^2\right) \varphi^2 \right).
        \end{equation}
        
        \begin{definition}[{\cite[Definition~1.1]{zhou_multiplicity_2019}}] Given a pair $(X, Z)$, where $Z$ is a subcomplex of $X$, and $\Phi_0: X \rightarrow (\C(M), \F)$, we define the \textbf{$\bm{(X, Z)}$-homotopy class of $\bm{\Phi_0}$}, denoted by $\Pi(\Phi_0)$, to be the set of all sequences of maps $\set{\Phi_i: X \rightarrow (\C(M), \F)}$ satisfying the following conditions:
            \begin{enumerate}
                \item Each $\Phi_i$ is homotopic to $\Phi_0$ in the $\F$-topology on $\C(M)$;
                \item The homotopy $H_i: [0, 1] \times X \rightarrow (\C(M), \F)$ associated with $\Phi_i$ satisfies
                \begin{equation}
                    \limsup_{i \rightarrow \infty}\sup\set{\mathbf{F}(H(t,x), \Phi_0(x)): t\in [0, 1], x \in Z} = 0\,.
                \end{equation}
            \end{enumerate}
            Here, each $\set{\Phi_i}_{i \in \mathbb{N}}$ is called a \textbf{$\bm{(X, Z)}$-homotopy sequence of mappings into $\bm{\C(M)}$}.
        \end{definition}

        \begin{definition}
            The \textbf{$\bm{c}$-width} of $\Pi(\Phi_0)$ is defined by:
            \begin{equation}
                \mathbf{L}^c(\Pi(\Phi_0)) = \inf_{\set{\Phi_i} \in \Pi(\Phi_0)} \limsup_{i \rightarrow \infty} \sup_{x \in X}\set{\A^c(\Phi_i(x))}\,.
            \end{equation}
        \end{definition}
        
        \begin{definition}
            A \textbf{min-max sequence} of $\Pi(\Phi_0)$ is a sequence
            \begin{equation}
                S = \set{\Phi_i}^\infty_{i = 1} \in \Pi(\Phi_0),
            \end{equation}
            with 
            \begin{equation}
                 \lim_{i \rightarrow\infty} \mathbf{L}^c(\Phi_i) = \mathbf{L}^c(\Pi(\Phi_0)).
            \end{equation}
            Its \textbf{critical set} is
            \begin{equation}
                \mathbf{C}(S) = \set{V \in \V_n(M)|V = \lim_j |\Phi_{i_j}(x_j)| \text{ and }\lim_j \A^c(\Phi_{i_j}(x_j)) = \L^c(\Pi(\Phi_0))}.
            \end{equation}
        \end{definition}

        \begin{theorem}[Min-max Theorem for $\Pi(\Phi_0)$, {\cite[Theorem~0.3]{zhouMinmaxTheoryConstant2017}} {\cite[Theorem~0.3]{zhou_existence_2018}} {\cite[Theorem~1.7]{zhou_multiplicity_2019}} {\cite[Theorem~3.4]{deyExistenceMultipleClosed2019}}]
            Given a closed Riemannian manifold $(M^{n + 1}, g)\,(n + 1 \geq 3)$ and $c > 0$, if the $(X, Z)$-homotopy class $\Pi(\Phi_0)$ of $\Phi_0: X \rightarrow \C(M)$ satisfies
            \begin{equation}
                \L^c(\Pi(\Phi_0)) > \sup_{x \in Z} \A^c(\Phi_0(x))\,,
            \end{equation}
            then there exists a $\Omega \in \C(M)$ such that $\A^c(\Omega) = \L^c(\Pi(\Phi_0))$ and $\partial \Omega$ is a $c$-CMC hypersurface with optimal regularity.
        \end{theorem}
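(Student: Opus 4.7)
The plan is to follow the standard three-step min-max machinery (pull-tight, discretization, combinatorial interpolation), running everything for the weighted functional $\A^c$ in place of the mass $\M$, while enforcing the $(X,Z)$-homotopy constraint by localizing every deformation away from $Z$. The strict gap hypothesis
\begin{equation}
\L^c(\Pi(\Phi_0)) > \sup_{x \in Z} \A^c(\Phi_0(x))
\end{equation}
is exactly what makes this localization possible: fix once and for all an $\varepsilon_0 > 0$ with $\L^c(\Pi(\Phi_0)) - 2\varepsilon_0 > \sup_Z \A^c(\Phi_0)$, and multiply every deformation by a cutoff $\eta(\A^c(\cdot))$ that vanishes on the sublevel set $\set{\A^c \leq \L^c - 2\varepsilon_0}$. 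Since the $(X,Z)$-homotopy condition forces $\sup_Z \A^c(\Phi_i(x)) \to \sup_Z \A^c(\Phi_0(x))$ as $i \to \infty$, such a cutoff leaves $\Phi_i|_Z$ untouched for $i$ large.

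First I would pull tight. At any $\Omega \in \C(M)$ for which $\partial \Omega$ is not weakly $c$-stationary, the first variation formula for $\A^c$ given in the excerpt produces a smooth vector field $Y_\Omega$ whose flow strictly decreases $\A^c$ nearby. Gluing such fields over an $\F$-neighborhood of the non-stationary part of the critical set by a partition of unity and applying the level-set cutoff above, one builds a continuous homotopy $H^{(1)}_i \colon [0,1] \times X \to \C(M)$ with $H^{(1)}_i(0,\cdot) = \Phi_i$, with $H^{(1)}_i(t,\cdot)|_Z$ constant in $t$ for $i$ large, and such that $S^* = \set{H^{(1)}_i(1,\cdot)}$ is a min-max sequence in $\Pi(\Phi_0)$ with $\mathbf{C}(S^*) \subset \mathbf{C}(S)$ consisting only of varifolds whose first variation of $\A^c$ vanishes, i.e.\ of boundaries of Cacciopoli sets that are weakly $c$-stationary.

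Next I would discretize and interpolate. Using the $\C(M)$-version of the discretization theorem together with the Almgren interpolation result (as in Zhou \cite{zhou_multiplicity_2019}, adapted to the functional $\A^c$), replace each $H^{(1)}_i(1,\cdot)$ by a discrete map $\phi_i \colon X \cap I(m, N_i)_0 \to \C(M)$ of fineness $\mathbf{f}(\phi_i) < \delta_i^2 \to 0$, still with $\sup \A^c(\phi_i) \to \L^c$. Choosing $\delta_i$ small enough relative to $\varepsilon_0$, the $\F$-drift on $Z$ remains beneath the cutoff, so both the discretizations and their subsequent Almgren reinterpolations back to continuous maps stay in $\Pi(\Phi_0)$. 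Now run the Pitts combinatorial argument for $\A^c$: assuming toward contradiction that no $V \in \mathbf{C}(\set{\phi_i})$ with $\|V\|(M) = \L^c$ is almost minimizing for $\A^c$ in a sufficient family of concentric annuli around every point, one assigns to each vertex $x$ one annulus $A(x)$ chosen from $5^m$ disjoint concentric annuli so that nearby vertices receive non-intersecting annuli, in which $\phi_i(x)$ admits an $(\varepsilon_0,\delta)$-deformation lowering $\A^c$ by at least $3\varepsilon_0/4$. Gluing these into a discrete homotopy, applying Almgren's interpolation to pass to a continuous sweepout $\tilde\Phi_i$ and concatenating with $H^{(1)}_i$, the level cutoff keeps the $Z$-values fixed so $\tilde\Phi_i \in \Pi(\Phi_0)$, yet
\begin{equation}
\limsup_{i \to \infty} \sup_{x \in X} \A^c(\tilde\Phi_i(x)) \leq \L^c(\Pi(\Phi_0)) - \tfrac{\varepsilon_0}{2},
\end{equation}
contradicting the definition of $\L^c$. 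Hence some $V = |\partial \Omega| \in \mathbf{C}(\set{\phi_i})$ is almost minimizing in annuli for $\A^c$, and the $c$-CMC regularity theory (Zhou--Zhu) upgrades its support to a $c$-CMC hypersurface with optimal regularity; the $\A^c$-stationarity and the mass identity then yield $\A^c(\Omega) = \L^c(\Pi(\Phi_0))$.

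The main obstacle is the mutual compatibility of the $(X,Z)$ constraint with the pull-tight, interpolation, and combinatorial competitor steps: each deformation must be supported away from $Z$, while still producing a definite drop in $\A^c$ at the critical level. These requirements are reconciled exactly by the gap $\L^c(\Pi(\Phi_0)) > \sup_Z \A^c(\Phi_0)$, used uniformly as a level-set cutoff; granting this, the remaining ingredients (first variation of $\A^c$, Almgren interpolation, Pitts colouring, and $c$-CMC almost-minimizing regularity) are all off-the-shelf from the works cited above.
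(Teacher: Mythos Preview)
The paper does not supply its own proof of this theorem: it is quoted verbatim as Theorem~1.7 of \cite{zhou_multiplicity_2019} and Theorem~3.4 of \cite{deyExistenceMultipleClosed2019}, and the only argument the paper records is the generic three-step sketch (pull-tight, discretization, interpolation/combinatorial lemma) given just above for the Almgren--Pitts width $\L(\Pi)$. Your proposal is precisely that sketch transported to the functional $\A^c$ on $\C(M)$ with the extra bookkeeping needed to preserve the $(X,Z)$-homotopy constraint, and this is exactly the line followed in the cited references; so the approach is correct and essentially the same.

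One small wording slip worth fixing: in the combinatorial step you write ``no $V \in \mathbf{C}(\{\phi_i\})$ with $\|V\|(M) = \L^c$''; the relevant level here is the $\A^c$-value of the approximating Caccioppoli sets, not the mass of the limit varifold (indeed $\|V\|(M)$ can differ from $\L^c$ by up to $c\,\mathrm{Vol}(M)$). The critical set in this setting is already defined via $\lim_j \A^c(\Phi_{i_j}(x_j)) = \L^c$, so you should phrase the contradiction hypothesis in those terms.
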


    \subsection{Restrictive Homotopy Class}

        In the proof of the min-max theorems (even in X. Zhou's setting), a key observation is that for $i$ large enough, along the homotopy maps $H^{(1)}_i, H^{(2)}_i, H^{(3)}_i$ and $H^{(4)}_i$, the mass will only increase by at most $3\delta_i$. In fact, the homotopy maps in Theorem \ref{Thm:deform} that we will prove later also satisfies this property. Hence, this indicates that, in the definition of homotopy classes, we can set an upper bound of the ($c$-)Area Functional along the homotopy maps and the min-max scheme still works for such a restrictive homotopy classes.

        \begin{figure}[!h]
            \centering
            \begin{tikzpicture}
            \draw[thick]  plot [smooth,tension=0.8] 
            coordinates {(-3,-1) (-1.5,2.2) (0, 0) (1.8,2.8) (3.1,-0.5)
            (3.6, 1)};

            \coordinate (O) at (0, 0);
            \coordinate (A) at (1.5, -1.5);
            \coordinate (B) at (-0.8, -0.8);
            \coordinate (T1) at (-1.5, 2.2);
            \coordinate (B1) at (-1.5, 0);
            \coordinate (T2) at (1.8, 2.8);
            \coordinate (B2) at (1.8, 0);
            \draw (A) node[below] {A} to [bend right=45] (O);
            \draw[dotted] (B) node[below] {B} to [bend left=45] (O);
            \draw[dashed, |-|] (T1) -- node[right] {$>\delta$} (B1);
            \draw[dashed, |-|] (T2) -- node[right] {$>\delta$} (B2);
            \end{tikzpicture}
            \caption{Restrictive Mountain Pass with an upper bound $\delta$}
        \end{figure}
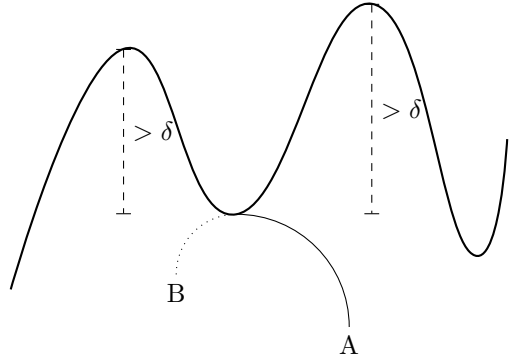

        \begin{definition}
            We call a subset $\Pi^\delta$ of continuous maps from finite dimensional simplicial complexes to $\mathcal{Z}_n(M^{n+1}; \F; \mathbb{Z}_2)$ an \textbf{restrictive $\bm{m}$-parameter $\F$-homotopy family with an upper bound $\mathbf{\delta}$} if the following properties hold.
            \begin{itemize}
                \item For any $\Phi \in \Pi^\delta$, $X = \mathrm{dmn}(\Phi)$ is a subcomplex of $I(m, k)$ for some $k \in \mathbb{N}^+$;
                \item For any $\Phi \in \Pi^\delta$, every continuous $\Phi': \mathrm{dmn}(\Phi) \rightarrow \mathcal{Z}_n(M^{n+1}; \F; \mathbb{Z}_2)$, homotopic to $\Phi$ via a homotopy map $H: X\times[0,1] \rightarrow \mathcal{Z}_n(M^{n+1}; \F; \mathbb{Z}_2)$ with $\sup_{t, x} \mathbf{M}(H(t,x)) < \sup_x \mathbf{M}(\Phi(x)) + \delta$, also lies in $\Pi^\delta$.
            \end{itemize}
        \end{definition}

        \begin{definition}
            Given a pair $(X, Z)$, where $Z$ is a subcomplex of $X$, a constant $c > 0$, and $\Phi_0: X \rightarrow (\C(M), \F)$, we define the \textbf{restrictive $\bm{(X, Z)}$-homotopy class of $\bm{\Phi_0}$ for $\A^c$ with an upper bound $\bm{\delta}$}, denoted by $\Pi^\delta_c(\Phi_0)$, to be the set of all sequences of $\mathbf{F}$-continuous maps $\set{\Phi_i: X \rightarrow (\C(M), \F)}$ satisfying the following conditions:
            \begin{enumerate}
                \item Each $\Phi_i$ is homotopic to $\Phi_0$ in the $\F$-topology on $\C(M)$, and
                \item The homotopy $H_i: [0, 1] \times X \rightarrow (\C(M), \F)$ associated with $\Phi_i$ satisfies
                    \begin{equation}
                        \limsup_{i \rightarrow \infty}\sup\set{\mathbf{F}(H(t,x), \Phi_0(x)): t\in [0, 1], x \in Z} = 0\,,
                    \end{equation}
                    and
                    \begin{equation}
                        \sup_{t, x} \A^c(H(t,x)) < \sup_{x} \A^c(\Phi_0(x)) + \delta\,.
                    \end{equation}
                    
            \end{enumerate}
            Here, each $\set{\Phi_i}_{i \in \mathbb{N}}$ is called a \textbf{restrictive $\bm{(X, Z)}$-homotopy sequence of mappings into $\bm{\C(M)}$}.
        \end{definition}

        Similarly, one can define the widths $\mathbf{L}(\Pi^\delta)$, $\mathbf{L}^c(\Pi^\delta_c(\Phi_0))$, min-max sequences, critical sets, etc. And the min-max theorems in this new setting can be proved analogously.

        \begin{theorem}[Min-max theorem for $\Pi^\delta$]
            If $\Pi^\delta$ is a $m$-parameter $\F$-homotopy family with an upper bound $\delta$, $\L(\Pi^\delta) > 0$ and $S$ is a min-max sequence of $\Pi^\delta$, there exists a minimal hypersurfaces with optimal regularity $V \in \mathbf{C}(S)$ with the property $(m)$ and $\|V\|(M) = \L(\Pi^\delta)$.
        \end{theorem}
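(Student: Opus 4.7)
The plan is to reprise, essentially verbatim, the four-step proof sketched above for the standard min-max theorem (pull tight, discretization, Almgren interpolation, and combinatorial contradiction on $5^m$ concentric annuli), while carefully tracking the mass increments at each stage to verify that the composed deformation stays within the fixed budget $\delta$ that characterizes membership in $\Pi^\delta$. The key structural observation, already emphasized by the author just before the definition of $\Pi^\delta$, is that every step of the standard argument produces a homotopy whose max-mass grows only by an amount that one is free to send to zero by choosing auxiliary parameters small.

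Given a min-max sequence $S = \set{\Phi_i} \subset \Pi^\delta$, fix a sequence $\delta_i \downarrow 0$ with $\delta_i < \delta/10$. First I would apply the pull-tight $H^{(1)}_i$; since its induced flow on $\V_n(M)$ is generated by mass-decreasing vector fields, $\sup_{t,x}\M(H^{(1)}_i(t,x)) \leq \sup_x \M(\Phi_i(x))$, so this step is automatically $\Pi^\delta$-admissible and yields an updated min-max sequence $S^*$ whose critical set $\mathbf{C}(S^*) \subset \mathbf{C}(S)$ consists entirely of stationary varifolds. Next I would perform the discretization $H^{(2)}_i$ whose $\F$-displacement is bounded by $\delta_i$, so the max-mass increase is at most $\delta_i$, and extract discrete sweepouts $\psi_i$ of fineness $< (\delta_i)^2$ exactly as in the sketch.

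Arguing by contradiction, if no $V \in \mathbf{C}(S)$ had property $(m)$, then by $\F$-compactness of $\mathbf{C}(S)$ together with the combinatorial argument on $5^m$ concentric annuli one extracts $\varepsilon_0 > 0$ and constructs a discrete deformation $h_i$ of $\psi_i$ along which $\sup \M$ grows by at most $(\delta_i)^2$ and ultimately decreases by at least $3\varepsilon_0/4$. Applying Almgren's continuous interpolation (\cite[Theorem~3.10]{marques_existence_2017}) converts this into continuous homotopies $H^{(3)}_i$ and $H^{(4)}_i$, each of whose max-mass increment is bounded by $\delta_i$. Concatenating $H^{(1)}_i, H^{(2)}_i, H^{(3)}_i, H^{(4)}_i$ yields a homotopy $\tilde H_i$ of total max-mass increment at most $3\delta_i + (\delta_i)^2 < \delta$ for $i$ large, so its endpoint $\tilde\Psi_i$ lies in $\Pi^\delta$ by the very definition of a restrictive $\F$-homotopy family. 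However, $\sup_x \M(\tilde\Psi_i) \leq \sup_x \M(\Phi_i) - \varepsilon_0/2$ for all sufficiently large $i$, contradicting $\lim_i \sup_x \M(\Phi_i) = \L(\Pi^\delta)$. Hence some $V \in \mathbf{C}(S)$ has property $(m)$, and optimal regularity follows from the Schoen-Simon compactness theorem exactly as in the unrestricted case.

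The main (and really only) obstacle is bookkeeping: one must verify that the auxiliary parameters $\delta_i$ from the standard proof can be taken simultaneously below a fixed fraction of the global budget $\delta$. Since $\delta_i \to 0$ while $\delta$ is a fixed positive constant, this is automatic for all sufficiently large $i$, and no new analytic input beyond that of the standard min-max theorem is required; the entire argument is essentially a verification that the perturbations built in the unrestricted proof already respect the new definitional constraint on $\Pi^\delta$.
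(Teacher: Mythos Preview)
Your proposal is correct and matches the paper's approach exactly. The paper does not spell out a proof of this theorem at all; it merely remarks that the homotopies $H^{(1)}_i,\dots,H^{(4)}_i$ in the standard argument increase mass by at most $3\delta_i$ and then states that ``the min-max theorems in this new setting can be proved analogously,'' which is precisely the bookkeeping verification you carry out.
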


        \begin{theorem}[Min-max Theorem for $\Pi^\delta_c(\Phi_0)$]\label{thm:rest_c}
            Given a closed Riemannian manifold $(M^{n + 1}, g)\,(n \geq 2)$ and $c > 0$ and $\delta > 0$, if the restrictive $(X, Z)$-homotopy class $\Pi^\delta_c(\Phi_0)$ of $\Phi_0: X \rightarrow \C(M)$ for $\A^c$ with an upper bound $\delta$ satisfies
            \begin{equation}
                \L^c(\Pi^\delta_c(\Phi_0)) > \sup_{x \in Z} \A^c(\Phi_0(x))\,,
            \end{equation}
            then there exists a $\Omega \in \C(M)$ such that $\A^c(\Omega) = \L^c(\Pi^\delta_c(\Phi_0))$ and $\partial \Omega$ is a $c$-CMC hypersurface with optimal regularity.
        \end{theorem}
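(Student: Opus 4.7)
The plan is to follow X.\ Zhou's proof of the $(X,Z)$-homotopy min-max theorem (Theorem~1.7 in \cite{zhou_multiplicity_2019}) essentially verbatim, with an additional bookkeeping step verifying that each auxiliary homotopy constructed during the proof itself respects the restrictive $\A^c$-increase constraint of $\Pi^\delta_c(\Phi_0)$. In other words, what we actually prove is that the whole existence-and-regularity argument can be carried out using only homotopies whose $\A^c$-value never exceeds $\sup_{x}\A^c(\Phi_0(x)) + \delta$, so that the improved sweepout produced at the end still lies in the restrictive class.

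Concretely, I would take a min-max sequence $S = \set{\Phi_i} \in \Pi^\delta_c(\Phi_0)$ together with an auxiliary sequence $\delta_i \to 0$, and reproduce the standard four-step Pitts/Marques--Neves/Zhou scheme: (1) \emph{pull-tight} along the negative-gradient flow of $\A^c$ to produce $\Phi_i^{(1)}$ whose $\F$-subsequential limits on maximal slices are $\A^c$-stationary, i.e.\ $c$-CMC in the varifold sense; (2) $\F$-to-$\M$ \emph{discretization} through a homotopy $H_i^{(2)}$ with $\sup_{t,x}\F(H_i^{(2)}(t,x),\Phi_i^{(1)}(x)) < \delta_i$; (3) a Pitts-type \emph{combinatorial deformation} $h_i$ on the resulting discrete sweepout using the annular almost-minimizing dichotomy; and (4) Almgren-type \emph{interpolation} back to a continuous sweepout $\Phi_i^{(4)}$. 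Arguing by contradiction, assume no $\Omega \in \C(M)$ with $\A^c(\Omega) = \L^c(\Pi^\delta_c(\Phi_0))$ has regular $c$-CMC boundary; then Zhou's argument produces a new sweepout $\tilde\Phi_i$, homotopic to $\Phi_i$ through the concatenation $H_i^{(1)} \ast H_i^{(2)} \ast H_i^{(3)} \ast H_i^{(4)}$, with $\sup_x \A^c(\tilde\Phi_i(x))$ below $\L^c(\Pi^\delta_c(\Phi_0)) - \varepsilon_0/2$ for some $\varepsilon_0 > 0$ independent of $i$.

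The new step is verifying that the concatenated homotopy is admissible in the restrictive class. The $\F$-neighborhood-of-$Z$ requirement is handled exactly as in Zhou, since the strict inequality $\L^c(\Pi^\delta_c(\Phi_0)) > \sup_{x\in Z}\A^c(\Phi_0(x))$ $\F$-separates the critical set from $\Phi_0(Z)$, so pull-tight and the subsequent deformations can be localized away from a small $\F$-neighborhood of $Z$. For the $\A^c$-upper bound, one uses $\A^c(\Omega) = \M(\partial\Omega) - c\,\mathcal{H}^{n+1}(\Omega)$ together with the definition of $\F$ on $\C(M)$, which simultaneously controls $\Fl(\Omega_1 - \Omega_2)$ (hence the volume term $c\,\mathcal{H}^{n+1}$) and the varifold distance $\F(|\partial\Omega_1|,|\partial\Omega_2|)$. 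Combined with the $\M$-control coming from the discretization and Almgren interpolation theorems, each of $H^{(1)}, H^{(2)}, H^{(3)}, H^{(4)}$ changes $\A^c$ by $O(\delta_i)$ (step (1) in fact decreases it), so for $\delta_i$ sufficiently small---say $100\,\delta_i < \delta/2$---the entire concatenation stays within the $\delta$-budget and remains in $\Pi^\delta_c(\Phi_0)$.

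The main obstacle is therefore quantitative rather than conceptual: one must check that the discretization and interpolation machinery of Marques--Neves used in Zhou's proof produces \emph{$\F$-controlled} homotopies, not merely $\M$-controlled ones, so that the $c\,\mathcal{H}^{n+1}$ term is also kept under control along the way. This is precisely what Proposition~3.7 of \cite{marques_morse_2018-1} and Theorem~3.10 of \cite{marques_existence_2017} already supply. Once this bookkeeping is in place, the concatenated sweepout $\tilde\Phi_i$ contradicts the definition of $\L^c(\Pi^\delta_c(\Phi_0))$ exactly as in Zhou's proof, and the $c$-CMC regularity of the limit $\partial\Omega$ is extracted from its almost-minimizing property in annuli via the $c$-CMC regularity theorem of \cite{zhou_multiplicity_2019, deyExistenceMultipleClosed2019}.
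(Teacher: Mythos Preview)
Your proposal is correct and matches the paper's approach exactly. The paper does not give a detailed proof of this theorem at all; it simply asserts that ``the min-max theorems in this new setting can be proved analogously,'' having already observed in the paragraph introducing the restrictive class that along the standard homotopies $H^{(1)}_i, H^{(2)}_i, H^{(3)}_i, H^{(4)}_i$ (pull-tight, discretization, combinatorial deformation, interpolation) the mass increases by at most $3\delta_i$, so the restrictive constraint is automatically satisfied for $i$ large---precisely the bookkeeping you spell out.
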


        \begin{remark}
            To prove the Morse inequalities, the authors of \cite{marquesMorseInequalitiesArea2020} consider the homology min-max for $H_k(\Z^b, \Z^a)$, where the homotopy class $\Pi$ constructed therein is essentiall the same as the restrictive homotopy class.
        \end{remark}

\section{\texorpdfstring{$k$}{k}-unstability and Morse Index}

    In the following, $(M^{n+1}, g)$ denotes a smooth and closed Riemannian manifold and $\omega_p = \omega_p(M, g)$ for convenience.

    \subsection{\texorpdfstring{$k$}{k}-unstability}

    \begin{definition}[\cite{marques_morse_2016}]
        A stationary varifold $\Sigma \in \V_n(M)$ is called \textbf{$\bm{k}$-unstable}, provided that there exists $\varepsilon > 0$, $0 < c_0 < 1$ and a smooth family $\set{F_v}_{v\in \overline{B}^k} \subset \mathrm{Diff}(M)$ with 
        \begin{equation}
             F_0 = \mathrm{Id}, F_{-v} = F^{-1}_v\,,
        \end{equation}
        for all $v \in \overline{B}^k$ (a unit ball) such that, for any $V \in \overline{\mathbf{B}}^\F_{2\varepsilon}(\Sigma)$, the smooth function
        \begin{equation}
            A^V: \overline{B}^k \rightarrow [0, \infty), \qquad v \mapsto \|(F_v)_\# V\|(M)\,,
        \end{equation}
        satisfies
        \begin{enumerate}
            \item $A^V$ has a unique maximum at $m(V)\in B^k_{c_0 / \sqrt{10}}(0)$;
            \item $-\frac{1}{c_0} \mathrm{Id} \leq D^2 A^V(u) \leq -c_0 \mathrm{Id}$ for all $u \in \overline{B}^k$.
        \end{enumerate}
    \end{definition}

    \begin{lemma}\label{lem:tech}
        Given a $k$-unstable $\Sigma$ with a quadruple $(\varepsilon, c_0, \set{F_v}, m)$, there exists $\varepsilon_0 \in (0, \varepsilon)$ small enough, such that for any $\tilde \varepsilon \in (0, \varepsilon_0)$ and $V \in \overline{\mathbf{B}}^\F_{2\tilde\varepsilon}(\Sigma)$,
        \begin{equation}
            (F_{S^{k-1}})_{\#}(V) \cap \overline{\mathbf{B}}^\F_{2\tilde\varepsilon}(\Sigma) = \emptyset.
        \end{equation}
    \end{lemma}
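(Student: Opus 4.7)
The plan is to combine the quantitative concavity of $A^V$ given by the $k$-unstability assumption with the Lipschitz continuity of mass in the $\F$-metric on $\V_n(M)$.

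First I would establish a uniform mass drop on the unit sphere: there is a constant $\eta=\eta(c_0)>0$ such that $\|(F_v)_\# V\|(M) \le \|V\|(M) - \eta$ for every $V \in \overline{\mathbf{B}}^\F_{2\varepsilon}(\Sigma)$ and every $v \in S^{k-1}$. This is a Taylor expansion of $A^V$ around its interior maximum $m(V)$ (where $\nabla A^V=0$): the upper Hessian bound $D^2 A^V \le -c_0 \mathrm{Id}$ gives quadratic decay at rate $c_0/2$, and since $c_0<1$ forces $|v-m(V)| \ge 1 - c_0/\sqrt{10}$ to be bounded away from $0$ uniformly on $S^{k-1}$, this yields a definite drop from $A^V(m(V))$ to $A^V(v)$. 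Symmetrically, the lower Hessian bound $D^2 A^V \ge -c_0^{-1}\mathrm{Id}$, together with $|m(V)|<c_0/\sqrt{10}$, controls how much $A^V(m(V))$ can exceed $A^V(0)=\|V\|(M)$. Subtracting the two estimates produces an explicit $\eta$ proportional to $c_0$ (a clean choice being $\eta=c_0/6$), \emph{uniformly} in $V \in \overline{\mathbf{B}}^\F_{2\varepsilon}(\Sigma)$.

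Second I would use that the mass functional $V\mapsto \|V\|(M)$ is Lipschitz for the $\F$-metric on $\V_n(M)$, with an explicit constant $C$ (plugging the test function $f\equiv 1$ into the standard definition already yields $|\|V_1\|(M) - \|V_2\|(M)| \le \F(V_1,V_2)$). Consequently, if both $V$ and $(F_v)_\# V$ belonged to $\overline{\mathbf{B}}^\F_{2\tilde\varepsilon}(\Sigma)$, the triangle inequality would force
\begin{equation*}
\bigl|\|V\|(M) - \|(F_v)_\# V\|(M)\bigr| \le 4C\tilde\varepsilon,
\end{equation*}
contradicting the mass gap of the first step as soon as $\tilde\varepsilon < \eta/(4C)$. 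Setting $\varepsilon_0 := \min\{\varepsilon, \eta/(4C)\}$ then completes the proof.

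The main obstacle is not really deep: the conclusion is a compactness-free consequence of the two-sided Hessian bounds baked into the definition of $k$-unstability, together with standard mass continuity in the $\F$-metric. The only point requiring some care is exploiting the assumption $c_0<1$ to ensure that the unit sphere lies outside $B^k_{c_0/\sqrt{10}}(0)$ with a quantitative margin, so that the upper Hessian bound can be applied on a region where $|v-m(V)|$ is bounded below uniformly in $V$; this is what turns the qualitative unstability into the concrete mass gap $\eta$ driving the contradiction.
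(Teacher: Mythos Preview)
Your argument is correct. It differs from the paper's in that the paper argues by contradiction and compactness: assuming the conclusion fails along a sequence $\tilde\varepsilon_i\to 0$, it extracts limits $V_i\to\Sigma$ and $v_i\to v\in S^{k-1}$ to deduce $(F_v)_\#\Sigma=\Sigma$, and then derives a contradiction by computing $A^\Sigma(0)-A^\Sigma(v)\ge c_0/2$ via the upper Hessian bound alone (exploiting that $\nabla A^\Sigma(0)=0$ by stationarity of $\Sigma$). Your route is direct and quantitative: you use \emph{both} Hessian bounds to get a uniform mass drop $\eta$ for every $V\in\overline{\mathbf{B}}^\F_{2\varepsilon}(\Sigma)$, then combine with the $1$-Lipschitz dependence of mass on the $\F$-metric. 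The upshot is that your proof avoids compactness entirely and produces an explicit $\varepsilon_0$ in terms of $c_0$, while the paper's version is shorter because after passing to the limit it only needs the one-sided concavity estimate at $\Sigma$ itself.
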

    \begin{proof}
        Suppose not and there exists $V_i \in \overline{\mathbf{B}}^\F_{2\varepsilon}(\Sigma)$, $\varepsilon_i \rightarrow 0$, and $v_i \in S^k$, such that
        \begin{enumerate}
            \item $\mathbf{F}(V_i, \Sigma) \leq 2 \varepsilon_i$.
            \item $(F_{v_i})_\#(V_i) \in \overline{\mathbf{B}}^\F_{2\varepsilon_i}(\Sigma)$.
        \end{enumerate}

        By the compactness of $S^k$, up to a subsequence, we may assume that $v_i \rightarrow v \in S^k$. Since $F_v$ is smooth, we can take a limit as $i \rightarrow \infty$ and obtain
        \begin{equation}
            (F_v)_\#(\Sigma) = \Sigma\,.
        \end{equation}

        However, by calculating the area of both side, we have the following inequality
        \begin{equation}
            \begin{aligned}
                \|\Sigma\|(M) - \|(F_v)_\#(\Sigma)\|(M) &= A^\Sigma(0) - A^\Sigma(v)\\
                    &= - \int^1_0 \frac{\mathrm{d}}{\mathrm{d}t} A^\Sigma(tv) \mathrm{d}t\\
                    &= -\int^1_0 \int^t_0 \frac{\mathrm{d}^2}{\mathrm{d}^2 s} A^\Sigma(sv) \mathrm{d}s \mathrm{d}t\\
                    &= -\int^1_0 \int^t_0 D^2 A^\Sigma(sv)(v, v) \mathrm{d}s \mathrm{d}t\\
                    &\geq \int^1_0 \int^t_0 c_0 \mathrm{d}s \mathrm{d}t\\
                    &= \frac{c_0}{2} > 0\,,
            \end{aligned}
        \end{equation}
        which implies that $(F_v)_\#(\Sigma) \neq \Sigma$, giving a contradiction.
    \end{proof}

    In the following of this paper, for technical reason, we always take $\varepsilon = \varepsilon_0$ in the definition of $k$-unstability, and it is easy to check that the varifold is still $k$-unstable with the new $\varepsilon$.

    \begin{lemma}\label{lem:grad}
        Given a $k$-unstable varifold $\Sigma$ with a quadruple $(\varepsilon, c_0, \set{F_v}, m)$ and any fixed $V \in \overline{\mathbf{B}}^\F_{2\varepsilon}(\Sigma)$, if an integral curve $x: [0, L] \rightarrow \overline{B}^k$ disjoint from $m(V)$ is defined by
        \begin{equation}
            \frac{\mathrm{d}}{\mathrm{d}t}x(t) = -f(t)\nabla A^V(x(t))\,,
        \end{equation}
        where $f(t)$ is a positive continuous function, then we have
        \begin{equation}
            A^V(x(L)) - A^V(x(0)) \leq -\frac{c_0}{2}|x(L) - x(0)|^2 
        \end{equation}
    \end{lemma}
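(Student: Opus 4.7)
The plan is to reduce the claim to the strong concavity bound $D^2 A^V \leq -c_0\,\mathrm{Id}$ via Taylor's theorem, and then to control the remaining first-order term using the gradient-flow equation. First, applying Taylor's theorem with integral remainder to the curve $s \mapsto A^V(\gamma(s))$ on $[0,1]$, where $\gamma(s) = x(0) + s(x(L) - x(0))$, yields
\[
    A^V(x(L)) - A^V(x(0)) = \langle \nabla A^V(x(0)), x(L) - x(0)\rangle + \int_0^1 (1-s)\,\langle D^2 A^V(\gamma(s))(x(L) - x(0)), x(L) - x(0)\rangle\, ds.
\]
The Hessian bound immediately controls the remainder by $-\tfrac{c_0}{2}|x(L) - x(0)|^2$, giving
\[
    A^V(x(L)) - A^V(x(0)) \leq \langle \nabla A^V(x(0)), x(L) - x(0)\rangle - \tfrac{c_0}{2}|x(L) - x(0)|^2.
\]
It therefore suffices to show $\langle \nabla A^V(x(0)), x(L) - x(0)\rangle \leq 0$.

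Using the flow equation $\dot{x}(t) = -f(t)\nabla A^V(x(t))$, the displacement integrates as $x(L) - x(0) = -\int_0^L f(t)\nabla A^V(x(t))\, dt$, so the desired inequality becomes
\[
    \int_0^L f(t)\,\langle \nabla A^V(x(0)), \nabla A^V(x(t))\rangle\, dt \geq 0.
\]
I would prove this by studying the scalar $\phi(t) := \langle \nabla A^V(x(0)), \nabla A^V(x(t))\rangle$. Note $\phi(0) = |\nabla A^V(x(0))|^2 > 0$ since $x(0) \neq m(V)$, and differentiating along the flow gives $\phi'(t) = -f(t)\langle \nabla A^V(x(0)), D^2 A^V(x(t))\nabla A^V(x(t))\rangle$. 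Combined with the monotonicity $\tfrac{d}{dt}|\nabla A^V(x(t))|^2 \geq 2 c_0 f(t)|\nabla A^V(x(t))|^2$ (which follows from $D^2 A^V \leq -c_0\,\mathrm{Id}$ and shows $|\nabla A^V|$ grows exponentially along the flow), a Grönwall-type estimate should force $\phi(t) > 0$ on $[0, L]$.

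The main obstacle is this last step: while the geometric picture is intuitive -- since the trajectory moves exponentially away from the unique maximum $m(V)$, the gradient cannot rotate enough to reverse its component along $\nabla A^V(x(0))$ -- the rigorous argument requires balancing the exponential growth of $|\nabla A^V(x(t))|$ against the potential rotation caused by a Hessian that can have operator norm as large as $1/c_0$. An alternative I would pursue in parallel is a direct Lyapunov argument on $W(t) := A^V(x(t)) - A^V(x(0)) + \tfrac{c_0}{2}|x(t) - x(0)|^2$, which vanishes at $t = 0$ and satisfies $W'(t) = -f(t)\langle \nabla A^V(x(t)), \nabla A^V(x(t)) + c_0(x(t) - x(0))\rangle$. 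Writing $\nabla A^V(x(t)) = \nabla A^V(x(0)) + M_t(x(t) - x(0))$ with the symmetric operator $M_t = \int_0^1 D^2 A^V(x(0) + s(x(t) - x(0)))\, ds$ satisfying $-\tfrac{1}{c_0}\mathrm{Id} \leq M_t \leq -c_0\,\mathrm{Id}$, the expression $|\nabla A^V(x(t))|^2 + c_0\langle x(t) - x(0), \nabla A^V(x(t))\rangle$ can be rewritten in a form amenable to direct sign analysis, yielding $W'(t) \leq 0$ and hence $W(L) \leq 0$, which is exactly the claim.
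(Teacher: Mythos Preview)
Your strategy is genuinely different from the paper's, but as written the argument is incomplete, and the missing step is precisely the hard one. Both of your sub-approaches ultimately reduce to a ``no-rotation'' statement that you do not prove: for the Taylor route you need $\phi(t)=\langle\nabla A^V(x(0)),\nabla A^V(x(t))\rangle\geq 0$ (or at least its $f$-weighted integral nonnegative), and for the Lyapunov route you need $|v|^2+c_0\langle w,v\rangle\geq 0$. In either case the only control on $D^2A^V$ is the two-sided bound $-\tfrac{1}{c_0}\mathrm{Id}\leq D^2A^V\leq -c_0\,\mathrm{Id}$, so the linear ODE $\dot v=(-D^2A^V)v$ allows the direction of $v$ to rotate at angular speed up to roughly $\tfrac{1}{2}(\tfrac{1}{c_0}-c_0)$, and there is no a priori bound on $L$. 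Your Gr\"onwall sketch only shows $|v|$ grows, not that the component along $v_0$ stays positive; and in the Lyapunov computation, after writing $\nabla A^V(x(t))=\nabla A^V(x(0))+M_tw$ and using $-c_0 M_t^{-1}\leq\mathrm{Id}$, one is left with the cross term $\langle(-c_0 M_t^{-1})v_0,v\rangle$, whose sign is exactly the rotation question again. So the proposal identifies the right difficulty but does not resolve it.

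The paper sidesteps this entirely by comparing the energy drop to the \emph{arc-length} $\ell(L)=\int_0^L f(t)|\nabla A^V(x(t))|\,dt$ rather than to the displacement. Writing $A^V(x(L))-A^V(x(0))=-\int_0^L f|\nabla A^V|^2$, the paper observes that along the flow $\tfrac{d}{ds}|\nabla A^V(x(s))|\geq c_0 f(s)|\nabla A^V(x(s))|$ (this uses only the \emph{lower} Hessian bound), so $|\nabla A^V(x(t))|\geq\int_0^t c_0 f(s)|\nabla A^V(x(s))|\,ds$; substituting and recognizing $\ell'\ell$ gives $A^V(x(L))-A^V(x(0))\leq -\tfrac{c_0}{2}\ell(L)^2$. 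Since $\ell(L)\geq|x(L)-x(0)|$ trivially, the lemma follows. The point is that the arc-length argument never compares $\nabla A^V$ at two different times, so the eigenvalue ratio $1/c_0^2$ and the associated rotation never enter.
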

    \begin{proof}
        By the fundamental theorem of calculus, we have
        \begin{equation}
            \begin{aligned}
                &\quad A^V(x(L)) - A^V(x(0)) \\
                = &\int^L_0 \nabla A^V(x(t))\cdot \frac{\mathrm{d}}{\mathrm{d}t}x(t)\mathrm{d}t\\
                = &-\int^L_0 f(t)|\nabla A^V(x(t))|^2\mathrm{d}t\\
                \leq &-\int^L_0 f(t) |\nabla A^V(x(t))| \left(\int^t_0 \frac{\mathrm{d}}{\mathrm{d}s}|\nabla A^V(x(s))|\mathrm{d}s\right) \mathrm{d}t\\
                = &-\int^L_0 f(t) |\nabla A^V(x(t))| \int^t_0 \frac{\nabla^2 A^V(x(s)) \left(\nabla A^V(x(s)), \frac{\mathrm{d}}{\mathrm{d}s} x(s)\right)}{|\nabla A^V(x(s))|} \mathrm{d}s\mathrm{d}t\\
                = & \int^L_0 f(t) |\nabla A^V(x(t))| \int^t_0 f(s)\frac{\nabla^2 A^V(x(s)) \left(\nabla A^V(x(s)), \nabla A^V(x(s))\right)}{|\nabla A^V(x(s))|} \mathrm{d}s\mathrm{d}t\\
                \leq &- c_0 \int^L_0 f(t) |\nabla A^V(x(t))| \left(\int^t_0 f(s)|\nabla A^V(x(s))| \mathrm{d}s\right) \mathrm{d}t\,.
            \end{aligned}
        \end{equation}
        Here, we use the fact that $\nabla A^V(v) \neq 0$ for any $v \neq m(V)$. Indeed, this follows from
        \begin{equation}
            \begin{aligned}
                \left\langle\nabla A^V(v), v - m(V)\right\rangle &= \int^1_0 \frac{\mathrm{d}}{\mathrm{d}t} \left\langle\nabla A^V(v), v - m(V)\right\rangle \mathrm{d}t\\
                    &= \int^1_0 \nabla^2 A^V(t(v - m(V)) + m(V))\left(v - m(V), v - m(V)\right) \mathrm{d}t\\
                    &\leq -c_0 |v - m(V)|^2 < 0\,.
            \end{aligned}
        \end{equation}
        
        Now, let's consider the length function $l$ satisfying
        \begin{equation}
            l'(t) = \left|\frac{d}{dt}x(t)\right| = f(t)|\nabla A^V(x(t))|\,,
        \end{equation}
        with $l(0) = 0$. Apparently, the length of the curve $l(L)$ is no less than $|x(L) - x(0)|$.

        Therefore, we have
        \begin{equation}
            \begin{aligned}
                 &\quad A^V(x(L)) - A^V(x(0)) \\
                \leq& -c_0 \int^L_0 l'(t) \int^{t}_0 l'(s)\mathrm{d}u \mathrm{d}t\\
                =& -c_0 \int^L_0 l'(t) l(t) \mathrm{d}t\\
                =& -\frac{c_0}{2}l(L)^2\\
                \leq &  -\frac{c_0}{2}|x(L) - x(0)|^2 
            \end{aligned}
        \end{equation}
    \end{proof}

    \begin{lemma}\label{lem:dist_F}
        For every $k$-unstable varifold $\Sigma \in \mathcal{V}_n(M)$, there exists an increasing function
        \begin{equation}
            h_\Sigma: \mathbb{R}^+ \rightarrow \mathbb{R}^+,
        \end{equation}
        such that for any $V \in \overline{\mathbf{B}}^\F_{2 \varepsilon}(\Sigma)$, and $v, w \in \overline{B}^{k}$,
        \begin{equation}
            |v - w| \geq h_\Sigma(\F((F^\Sigma_v)_\#(V), (F^\Sigma_w)_\#(V)))\,.
        \end{equation}
    \end{lemma}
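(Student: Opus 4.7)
The plan is to show a uniform linear estimate
\[
\F\bigl((F_v)_\#(V),\,(F_w)_\#(V)\bigr) \;\leq\; K\,|v-w|
\]
valid for every $V \in \overline{\mathbf{B}}^\F_{2\varepsilon}(\Sigma)$ and all $v,w \in \overline{B}^k$, where $K$ depends only on $\Sigma$ and the smooth family $\{F_v\}$. Once this is established, setting $h_\Sigma(t) := t/K$ produces a strictly increasing (in fact linear) function with the desired property, because the inequality above is equivalent to $|v-w| \geq h_\Sigma\bigl(\F((F_v)_\#V,(F_w)_\#V)\bigr)$.

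First I would extract a uniform mass bound. Since $V \in \overline{\mathbf{B}}^\F_{2\varepsilon}(\Sigma)$ forces $\|V\|(M) \leq \|\Sigma\|(M) + 2\varepsilon$, and the smooth family $\{F_w\}_{w\in\overline{B}^k}$ has differentials with uniformly bounded operator norm on $M$ (by compactness of $\overline{B}^k$ and of $M$), there exists a constant $M_0 = M_0(\Sigma,\{F_v\})$ such that $\|(F_w)_\#V\|(M) \leq M_0$ for all such $V$ and $w$. Next, I would factor
\[
(F_v)_\#V \;=\; (F_v \circ F_w^{-1})_\#\bigl((F_w)_\#V\bigr),
\]
and observe that $(v,w)\mapsto F_v\circ F_w^{-1}$ is a smooth family of diffeomorphisms of $M$ parametrized by the compact set $\overline{B}^k\times\overline{B}^k$, coinciding with the identity on the diagonal. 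Taylor's theorem in $(v,w)$ then gives a uniform bound
\[
\|F_v\circ F_w^{-1} - \mathrm{Id}\|_{C^1(M)} \;\leq\; C_1\,|v-w|.
\]

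Third, I would prove the key pushforward-perturbation estimate: for any varifold $U$ with $\|U\|(M)\leq M_0$ and any smooth diffeomorphism $G:M\to M$ with $\|G-\mathrm{Id}\|_{C^1}\leq \eta$ sufficiently small,
\[
\F(G_\#U,\,U) \;\leq\; C_2\,\eta\,M_0.
\]
This is a direct computation: for any admissible test function $f\in C_c(G_n(M))$ (bounded by $1$ and $1$-Lipschitz in both the base and the Grassmannian variable), one expands
\[
G_\#U(f) - U(f) = \int \bigl[f(G(x),\,dG(x)S)\,J_G(x,S) - f(x,S)\bigr]\,dU(x,S),
\]
and uses $d(G(x),x)\leq\eta$, $\|dG(x)S - S\|\leq C\eta$, and $|J_G(x,S)-1|\leq C\eta$ to bound the integrand by $C'\eta$ pointwise; integrating against $U$ and taking the supremum over admissible $f$ yields the claimed linear estimate. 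Setting $G = F_v\circ F_w^{-1}$ and $U = (F_w)_\#V$ and chaining with the Taylor bound from the previous step produces $K = C_1C_2 M_0$.

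The main obstacle is the third step, where one must unpack the definition of the $\F$-metric on varifolds and control the pushforward by a diffeomorphism that is only $C^1$-close (not $C^\infty$-close) to the identity; the estimate is standard but notationally heavy, and the linear-in-$\eta$ scaling is exactly what allows $h_\Sigma$ to be taken linear rather than merely continuous at $0$. Everything else in the argument is a compactness or mass-comparison statement. Finally, I note that the same argument gives a Lipschitz estimate for the entire map $(v,V)\mapsto (F_v)_\#V$, a fact that may be useful elsewhere in the paper.
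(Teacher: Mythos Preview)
Your argument is correct and in fact yields a stronger conclusion than the paper's: you obtain an explicit linear function $h_\Sigma(t)=t/K$, whereas the paper only produces \emph{some} increasing function. The paper's proof is entirely soft: it fixes a value $c>0$, supposes the infimum of $|v-w|$ over all triples $(V,v,w)$ achieving $\F((F_v)_\#V,(F_w)_\#V)=c$ is zero, extracts a convergent subsequence $(V_i,v_i,w_i)\to(V,v,v)$ by compactness of $\overline{\mathbf{B}}^\F_{2\varepsilon}(\Sigma)\times\overline{B}^k\times\overline{B}^k$, and reaches the contradiction $\F((F_v)_\#V,(F_v)_\#V)=c>0$. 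This avoids unpacking the $\F$-metric or the pushforward formula entirely, at the cost of giving no quantitative information about $h_\Sigma$. Your route requires the heavier third step---the explicit varifold perturbation estimate---but rewards you with a Lipschitz bound for the whole map $(v,V)\mapsto (F_v)_\#V$, which is strictly more than the paper needs (only positivity of $h_\Sigma$ at positive arguments is used downstream). One minor remark: your linearized Jacobian bound $|J_G-1|\leq C\eta$ is literally a small-$\eta$ statement, but since $\overline{B}^k\times\overline{B}^k$ is compact and the $\F$-distance is globally bounded by $2M_0$, the large-$|v-w|$ regime is handled trivially by enlarging $K$.
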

    \begin{proof}
        It suffices to show that for every $c > 0$, if
        \begin{equation}
            \set{(v, w)|\exists V \in \overline{\mathbf{B}}^\F_{2 \varepsilon}(\Sigma), \F((F^\Sigma_v)_\#(V), (F^\Sigma_w)_\#(V)) = c} \neq \emptyset\,,
        \end{equation}
        then
        \begin{equation}
            \set{|v - w||\exists V \in \overline{\mathbf{B}}^\F_{2 \varepsilon}(\Sigma), \F((F^\Sigma_v)_\#(V), (F^\Sigma_w)_\#(V)) = c}\,,
        \end{equation}
        has a positive lower bound.

        Let's argue by contradiction. Suppose that there exists a sequence $V_i \in \overline{\mathbf{B}}^\F_{2 \varepsilon}(\Sigma)$ and $v_i, w_i \in \overline{B}^k$ with $\F((F^\Sigma_{v_i})_\#(V_i), (F^\Sigma_{w_i})_\#(V_i)) = c$ but $|v_i - w_i| \rightarrow 0$. By compactness, up to a subsequence, we may assume that
        \begin{equation}
            V_i \rightarrow V, v_i \rightarrow v, w_i \rightarrow v\,,
        \end{equation}
        and $\F((F^\Sigma_{v})_\#(V), (F^\Sigma_{v})_\#(V)) = c$ gives a contradiction.
    \end{proof}

    \begin{definition}
        For a stationary varifold $\Sigma \in \V_n(M)$, we define its \textbf{index} to be
        \begin{equation}
            \mathrm{index}(\Sigma) = k,
        \end{equation}
        provided that it is $k$-unstable but not $(k+1)$-unstable. In particular, $\Sigma$ is called \textbf{stable} if it is not $1$-unstable.
    \end{definition}
    \begin{remark}
        This definition is compatible with the usual definition of the Morse index for a smooth minimal hypersurface (See \cite{marques_morse_2016}).
    \end{remark}

    Similarly, we can also adopt the weak index notion to critical points of the $\A^c$ functional.

    \begin{definition}[{\cite[(modified) Definition~2.1]{zhou_multiplicity_2019}}]
        A critical point $\Omega \in \C(M)$ of $\A^c$ is called \textbf{$\bm{k}$-unstable}, provided that there exists $\varepsilon > 0$, $0 < c_0 < 1$ and a smooth family $\set{F_v}_{v\in \overline{B}^k} \subset \mathrm{Diff}(M)$ with $F_0 = \mathrm{Id}, F_{-v} = F^{-1}_v$ for all $v \in \overline{B}^k$ such that, for any $\Omega' \in \overline{\mathbf{B}}^\F_{2\varepsilon}(\Omega)$, the smooth function
        \begin{equation}
            A^{\Omega'}_c: \overline{B}^k \rightarrow [0, \infty), \qquad A^{\Omega'}_c(v) = \A^c(F_v (\Omega')),
        \end{equation}
        satisfies
        \begin{enumerate}
            \item $A^{\Omega'}_c$ has a unique maximum at $m(\Omega')\in B^k_{c_0 / \sqrt{10}}(0)$;
            \item $-\frac{1}{c_0} \mathrm{Id} \leq D^2 A^{\Omega'}_c(u) \leq -c_0 \mathrm{Id}$ for all $u \in \overline{B}^k$.
        \end{enumerate}
        In addition, by taking $\varepsilon$ small enough, we also assume that for any $\Omega' \in \overline{\mathbf{B}}^\F_{2\varepsilon}(\Omega)$,
        \begin{equation}
            (F_{S^{k-1}})_\#(\Omega') \cap \overline{\mathbf{B}}^\F_{2 \varepsilon}(\Omega) = \emptyset.
        \end{equation}
    \end{definition}

    \begin{definition}
        For a critical point $\Omega \in \C(M)$ of $\A^c$, we define its \textbf{index} to be 
        \begin{equation}
            \mathrm{index}(\Omega) = k,    
        \end{equation}
        provided that $\Omega$ is $k$-unstable but not $(k+1)$-unstable. In particular, $\Omega$ is called \textbf{stable} if it is not $1$-unstable.
    \end{definition}

    \subsection{Almgren-Pitts realizations and Morse index}

    \begin{definition}
        Let $\bm{\mathcal{U}(\omega_p)}$ be the subset of $\mathcal{APR}_p$ consisting of all the $(p+1)$-unstable minimal hypersurfaces, and $\bm{\mathcal{S}(\omega_p)}$ the complement of $\mathcal{U}(\omega_p)$ in $\mathcal{APR}_p$, which is the subset of minimal hypersurfaces with a Morse index upper bound $p$.
    \end{definition}

    \begin{remark}
        If $\mathcal{U}(\omega_p) = \emptyset$, the main theorems are trivially true. Thus, we may assume that $\mathcal{U}(\omega_p)\neq \emptyset$.
    \end{remark}

    \begin{lemma}\label{lem:union_compact}
        $\mathcal{U}(\omega_p)$ is a countable union of compact sets, i.e.,
        \begin{equation}
            \mathcal{U}(\omega_p) = \bigcup^\infty_{i = 1}\mathcal{U}_i(\omega_p)\,,
        \end{equation}
        where $\mathcal{U}_i(\omega_p)$ is compact.
    \end{lemma}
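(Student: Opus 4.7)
The plan is to exhibit $\mathcal{U}(\omega_p)$ as an open subset of the compact metric space $(\mathcal{APR}_p, \F)$ and then appeal to the standard fact that open subsets of compact metric spaces are $\sigma$-compact.

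The crucial observation is that $(p+1)$-unstability is an \emph{open} condition in the $\F$-topology. Suppose $\Sigma \in \mathcal{APR}_p$ is $(p+1)$-unstable with quadruple $(\varepsilon, c_0, \{F_v\}_{v \in \overline{B}^{p+1}}, m)$, and let $\Sigma' \in \mathcal{APR}_p$ satisfy $\F(\Sigma', \Sigma) < \varepsilon/2$. For every $V \in \overline{\mathbf{B}}^\F_{\varepsilon}(\Sigma')$ the triangle inequality gives $\F(V, \Sigma) \leq \varepsilon + \varepsilon/2 < 2\varepsilon$, so $V \in \overline{\mathbf{B}}^\F_{2\varepsilon}(\Sigma)$ and the already-available function $A^V$ retains its unique-maximum and Hessian bounds. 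Consequently $\Sigma'$ is $(p+1)$-unstable with the quadruple $(\varepsilon/2, c_0, \{F_v\}, m|_{\overline{\mathbf{B}}^\F_{\varepsilon}(\Sigma')})$, proving that $\mathcal{U}(\omega_p)$ is open in $\mathcal{APR}_p$.

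Granted openness, the complement $\mathcal{S}(\omega_p) = \mathcal{APR}_p \setminus \mathcal{U}(\omega_p)$ is closed in the compact set $\mathcal{APR}_p$ (compactness is supplied by the remark following the definition of $\mathcal{APR}_p$). Define
\begin{equation*}
    \mathcal{U}_i(\omega_p) := \left\{\Sigma \in \mathcal{APR}_p : \F(\Sigma, \mathcal{S}(\omega_p)) \geq 1/i\right\}, \qquad i \in \mathbb{N}^+.
\end{equation*}
Since $\Sigma \mapsto \F(\Sigma, \mathcal{S}(\omega_p))$ is continuous, each $\mathcal{U}_i(\omega_p)$ is closed in $\mathcal{APR}_p$, hence compact; it is contained in $\mathcal{U}(\omega_p)$ since $\F(\Sigma, \mathcal{S}) \geq 1/i > 0$ forbids $\Sigma \in \mathcal{S}$; and every $\Sigma \in \mathcal{U}(\omega_p)$ has $\F(\Sigma, \mathcal{S}) > 0$ by openness, so $\Sigma \in \mathcal{U}_i$ for all sufficiently large $i$. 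Therefore $\mathcal{U}(\omega_p) = \bigcup_{i \geq 1} \mathcal{U}_i(\omega_p)$, as required.

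The main (and really only) obstacle is verifying the openness step: a priori one might worry that the witnessing data $(\varepsilon, c_0, \{F_v\}, m)$ for $\Sigma$ must be modified when $\Sigma$ is perturbed. The point is that the definition of $k$-unstability quantifies only over varifolds $V$ in the $2\varepsilon$-neighborhood of $\Sigma$---not over $\Sigma$ itself---and the ambient family $\{F_v\}$ is defined independently of $\Sigma$. A modest shrinkage of the radius therefore accommodates nearby basepoints $\Sigma'$ without touching $\{F_v\}$ at all, and the argument reduces to the triangle inequality.
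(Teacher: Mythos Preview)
Your proof is correct and arrives at exactly the same decomposition the paper uses, namely
\[
\mathcal{U}_i(\omega_p)=\bigl\{V\in\mathcal{APR}_p:\F(V,\mathcal{S}(\omega_p))\geq 1/i\bigr\}.
\]
The one substantive difference lies in how the closedness of $\mathcal{S}(\omega_p)$ (equivalently, the openness of $\mathcal{U}(\omega_p)$) is established. The paper simply \emph{quotes} an external compactness theorem for minimal hypersurfaces with bounded index (\cite{dey_compactness_2019}) to assert that $\mathcal{S}(\omega_p)$ is compact, whereas you give a self-contained, elementary argument that $(p+1)$-unstability is an $\F$-open condition: shrink the radius $\varepsilon$ to $\varepsilon/2$ and reuse the same diffeomorphism family $\{F_v\}$. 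Your route is shorter and avoids the deep geometric-measure-theoretic compactness input; the paper's route, on the other hand, confirms directly that the limit of index-$\leq p$ hypersurfaces still has index $\leq p$, a fact of independent interest elsewhere in the paper. Either way, once $\mathcal{S}(\omega_p)$ is known to be closed in the compact space $\mathcal{APR}_p$, the $\sigma$-compactness of its complement is immediate, and both arguments finish identically. (One cosmetic point: your formulation tacitly uses the convention $\F(\Sigma,\emptyset)=+\infty$ when $\mathcal{S}(\omega_p)=\emptyset$; the paper treats that degenerate case separately.)
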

    \begin{proof}
        By \cite[Proposition~3.1]{li_existence_2019}, $\mathcal{APR}_p$ is a compact set. By the compactness result in \cite{dey_compactness_2019}, we also know that $\mathcal{S}(\omega_p)$is compact. 
    
        If $\mathcal{S}(\omega_p) = \emptyset$, then define $\mathcal{U}_i(\omega_p) := \mathcal{U}(\omega_p)$ for all $i$, which is compact. 
    
        If $\mathcal{S}(\omega_p) \neq \emptyset$, we can define 
        \begin{equation}
            \mathcal{U}_i(\omega_p) := \set{V \in \mathcal{U}(\omega_p)| \F(V, \mathcal{S}(\omega_p)) \geq \frac{1}{i}}\,.
        \end{equation}
        Since the $\mathbf{F}$-topology is equivalent to the varifold topology on $\mathcal{APR}_p$, $\mathcal{U}_i(\omega_p)$ is a compact set and $\bigcup \mathcal{U}_i(\omega_p)$ is exactly the complement of $\mathcal{S}(\omega_p)$.

        In summary, we have
        \begin{equation}
            \mathcal{U}(\omega_p) = \bigcup^\infty_{i = 1} \mathcal{U}_i(\omega_p)\,,
        \end{equation}
        which leads to the conclusion.
    \end{proof}

\section{Deformation Theorem and Morse Index Upper Bound}

    Firstly, let's introduce some notations. Fix a sequence of varifolds $\set{\Sigma_k}^\infty_{k=1} \subset \mathcal{U}(\omega_p)$, and then by the definition of $(p + 1)$-unstability, each one is associated with a quadruple $(\varepsilon_k, c_{0,k}, \set{F^k_v}, m_k)$. 

    For a nonempty set $K\subset \mathbb{N}^+$, we define
    \begin{equation}
        \mathbf{B}^\F_K := \mathbf{B}^\F_K(\set{\Sigma_k}, \set{\varepsilon_k}) = \bigcap_{k \in K} \mathbf{B}^\F_{\varepsilon_k}(\Sigma_k)\,,
    \end{equation}
    and
    \begin{equation}
        \bar{k}(K) := \min\set{k \in K}\,.
    \end{equation}

    For $\lambda \geq 0$, we denote by $\mathbf{B}^\F_{\lambda,K}$ the set $\bigcap_{k \in K} \mathbf{B}^\F_{(2-2^{-\lambda})\varepsilon_k}(\Sigma_k)$ . In particular, we have for any $0 \leq \lambda_1 \leq \lambda_2$,
    \begin{equation}
        \mathbf{B}^\F_K = \mathbf{B}^\F_{0, K} \subset \mathbf{B}^\F_{\lambda_1, K} \subset \mathbf{B}^\F_{\lambda_2, K} \subset \bigcap_{k \in K} \mathbf{B}^\F_{2\varepsilon_k}(\Sigma_k)\,.
    \end{equation}

    For a nonempty finite subset $K \subset \mathbb{N}$, $\lambda \geq 0$ (assume $\mathbf{B}^\F_{\lambda,K} \neq \emptyset$), for each $V \in \overline{\mathbf{B}}^\F_{\lambda,K} := \bigcap_{\lambda' > \lambda} (\mathbf{B}^\F_{\lambda',K})$, we define the associated gradient flow $\set{\phi^V_{\lambda, K}(\cdot, t)}_{t\geq 0} \subset \mathrm{Diff}(\overline{B}^{p+1})$ generated by the vector field
        \begin{equation}
            u \mapsto - \F\left((F^{\bar{k}(K)}_{u})_{\#}(V), (\mathbf{B}^\F_{\lambda + 1, K})^c\right)\nabla A^V_{\bar k(K)}(u)\,,
        \end{equation} 
    where $A_{\bar k(K)}^V(u) = \|(F^{\bar k(K)}_{u})_{\#}(V)\|(M)$ for any $u \in \overline{B}^{p+1}$.

    \begin{lemma}\label{lem:mod}
        Given any $\lambda > 0$, a nonempty finite set $K \subset \mathbb{N}^+$ and small $\eta > 0$, there exist $c(\lambda, K) > 0$ and $T = T(\eta, K, \lambda) > 0$ satisfying the following property.

        For any $V \in \overline{\mathbf{B}}^\F_{\lambda,K}$ and any $v \in \overline{B}^{p+1}$ with $|v - m_{\bar{k}(K)}(V)|\geq \eta$ and
        \begin{equation}
             (F^{\bar k(K)}_{v})_\#(V) \in \overline{\mathbf{B}}^\F_{\lambda + 0.5, K},
        \end{equation}
        then we have
        \begin{equation}
            A^V_{\bar k(K)}(\phi^V_{\lambda, K}(v, T)) < A^V_{\bar k(K)}(\phi^V_{\lambda, K}(v, 0)) - c(\lambda, K).
        \end{equation}
    \end{lemma}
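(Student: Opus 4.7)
My plan is to combine the quadratic area-drop estimate of Lemma~\ref{lem:grad}, the modulus-of-continuity estimate of Lemma~\ref{lem:dist_F}, and a Gronwall-type escape argument. Writing $x(t) := \phi^V_{\lambda, K}(v, t)$, $m := m_{\bar k(K)}(V)$, $c_0 := c_{0, \bar k(K)}$, and $g(t) := (F^{\bar k(K)}_{x(t)})_\#(V)$, the goal is to produce a finite exit time $\tau \leq T(\eta, K, \lambda)$ at which $g$ has crossed an intermediate $\F$-annulus of width depending only on $(\lambda, K)$; this will give a uniform lower bound on $|x(\tau) - v|$, which Lemma~\ref{lem:grad} then converts into the required drop of $A^V_{\bar k(K)}$.

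To implement this I would interpose a radius $\lambda + 3/4$ between the given $\lambda + 1/2$ and the cutoff $\lambda + 1$, and set
\begin{equation*}
    D_0' := (2^{-\lambda - 3/4} - 2^{-\lambda - 1})\min_{k \in K}\varepsilon_k, \qquad D_0'' := (2^{-\lambda - 1/2} - 2^{-\lambda - 3/4})\min_{k \in K}\varepsilon_k.
\end{equation*}
A triangle inequality in $\F$ shows that whenever $g(t) \in \overline{\mathbf{B}}^{\F}_{\lambda + 3/4, K}$, the multiplier satisfies $f(t) = \F(g(t), (\mathbf{B}^{\F}_{\lambda+1, K})^c) \geq D_0'$. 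Using the Hessian bound from the $(p+1)$-unstability definition, which gives $\langle \nabla A^V_{\bar k(K)}(u), u - m\rangle \leq -c_0 |u - m|^2$ exactly as in the proof of Lemma~\ref{lem:grad}, I compute
\begin{equation*}
    \frac{d}{dt}|x(t) - m|^2 = -2 f(t) \langle \nabla A^V_{\bar k(K)}(x(t)), x(t) - m\rangle \geq 2 c_0 f(t) |x(t) - m|^2,
\end{equation*}
and Gronwall yields $|x(t) - m|^2 \geq \eta^2 \exp(2 c_0 D_0' t)$ for as long as $g([0,t])\subset \overline{\mathbf{B}}^{\F}_{\lambda + 3/4, K}$. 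Since $|x(t) - m| \leq 1 + c_0/\sqrt{10} < 2$, this forces $g$ to exit $\overline{\mathbf{B}}^{\F}_{\lambda + 3/4, K}$ at some finite $\tau$ with an explicit logarithmic-in-$\eta$ upper bound, which I take as $T(\eta, K, \lambda)$.

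At the exit time $\tau$ some $k \in K$ satisfies $\F(g(\tau), \Sigma_k) = (2 - 2^{-\lambda - 3/4})\varepsilon_k$, while the hypothesis gives $\F(g(0), \Sigma_k) \leq (2 - 2^{-\lambda - 1/2})\varepsilon_k$, so a second triangle inequality forces $\F(g(\tau), g(0)) \geq D_0''$. Lemma~\ref{lem:dist_F} applied to $\Sigma_{\bar k(K)}$ then produces $|x(\tau) - v| \geq h_{\Sigma_{\bar k(K)}}(D_0'') =: \delta(\lambda, K) > 0$. Because $|x(t) - m| \geq \eta > 0$ on $[0, \tau]$, the integral curve is disjoint from $m$, so Lemma~\ref{lem:grad} yields $A^V_{\bar k(K)}(x(\tau)) \leq A^V_{\bar k(K)}(v) - \tfrac{c_0}{2}\delta(\lambda, K)^2 =: A^V_{\bar k(K)}(v) - c(\lambda, K)$, and the monotonicity of $A^V_{\bar k(K)}$ along the flow preserves this inequality for every $T \geq \tau$. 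The main obstacle is the Gronwall step, which requires $f$ to be bounded below by a constant depending only on $(\lambda, K)$ throughout the escape phase; this is precisely what the interposed radius $\lambda + 3/4$ buys, and the rest of the argument is bookkeeping of triangle inequalities in the $\F$-metric plus the two preceding lemmas.
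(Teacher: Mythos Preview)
Your proof is correct but follows a genuinely different route from the paper's.  The paper argues by three successive compactness/contradiction steps: first it shows that for each $V$ the preimages $D_{small}(V)\subset D_{large}(V)\subset\overline{B}^{p+1}$ are separated by a positive Euclidean distance $d(V)$ (using Lemma~\ref{lem:tech}); then it upgrades this to a uniform gap $\tilde d(\lambda,K)$ by compactness of $\overline{\mathbf{B}}^{\F}_{\lambda,K}$; since the cut-off flow asymptotically reaches $\partial D_{large}(V)$, Lemma~\ref{lem:grad} gives an asymptotic area drop of at least $\tfrac{c_0}{2}\tilde d^2=2c(\lambda,K)$; finally the finite time $T$ is extracted by a further contradiction/compactness argument.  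You replace all three soft steps by explicit estimates: the Hessian bound feeds directly into a Gronwall inequality on $|x(t)-m|^2$, yielding an \emph{explicit} escape time $T(\eta,K,\lambda)=O\!\bigl(\tfrac{1}{c_0 D_0'}\log(1/\eta)\bigr)$ from the interposed ball $\overline{\mathbf{B}}^{\F}_{\lambda+3/4,K}$; the resulting $\F$-gap $D_0''$ at the exit is then converted into a Euclidean gap via Lemma~\ref{lem:dist_F}, after which Lemma~\ref{lem:grad} concludes exactly as before.  What your approach buys is a quantitative $T$ and the elimination of the compactness contradictions; what it costs is an appeal to Lemma~\ref{lem:dist_F}, which the paper does not need here and reserves for Step~3 of Theorem~\ref{Thm:deform}.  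One cosmetic point: Lemma~\ref{lem:grad} gives only $A^V(x(\tau))\leq A^V(v)-c(\lambda,K)$, whereas the statement demands a strict inequality; this is trivially fixed by halving your $c(\lambda,K)$.
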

    \begin{proof}
        If $\overline{\mathbf{B}}^\F_{\lambda,K} = \emptyset$, then the conclusion holds trivially, so in the following, we assume that $\overline{\mathbf{B}}^\F_{\lambda,K} \neq \emptyset$.

        For each $V \in \overline{\mathbf{B}}^\F_{\lambda,K}$, we define
        \begin{equation}
            D_{small}(V) := \set{v \in \overline{B}^{p + 1}| (F^{\bar k(K)}_{v})_{\#} (V) \in \overline{\mathbf{B}}^\F_{\lambda + 0.5, K}}\,,
        \end{equation}
        and
        \begin{equation}
            D_{large}(V) := \set{v \in \overline{B}^{p + 1}| (F^{\bar k(K)}_{v})_{\#} (V) \in \overline{\mathbf{B}}^\F_{\lambda + 1, K}}\,.
        \end{equation}
        We define $d(V)$ to be the Euclidean distance between $D_{small}(V)$ and $\partial D_{large}(V)$,
        \begin{equation}
            d(V) := \mathrm{dist}(D_{small}(V), \partial D_{large}(V)).
        \end{equation}

        \textbf{Claim 1:} $d(V) > 0$.

        Indeed, by the continuity of $(F^{\bar k(K)}_\cdot)_\#(V)$, if $w \in \partial D_{large}(V)$, then either $w \in S^p$, or $(F^{\bar k(K)}_{v})_{\#} (V) \in \partial \overline{\mathbf{B}}^\F_{\lambda + 1, K}$. However, the former case is impossible due to our technical assumption (Lemma \ref{lem:tech}). In the latter case, by the continuity of $(F^{\bar k(K)}_v)_\#(V)$ again and the fact that $\partial \overline{\mathbf{B}}^\F_{\lambda + 1, K} \cap \overline{\mathbf{B}}^\F_{\lambda + 0.5, K} = \emptyset$, we can conclude that $w \notin D_{small}(V)$. In other words, $\partial D_{large}(V) \cap D_{small}(V) = \emptyset$, and thus $d(V) > 0$ since these two sets are compact. $\hfill\blacksquare$\\

        \textbf{Claim 2:} $d(\cdot)$ has a uniform positive lower bound $\tilde d = \tilde d(\lambda, K) > 0$ on $\overline{\mathbf{B}}^\F_{\lambda,K}$. 

        Suppose not, and then there exists a sequence ${V_i} \subset \overline{\mathbf{B}}^\F_{\lambda,K}$, such that $d(V_i) \rightarrow 0$. $\overline{\mathbf{B}}^\F_{\lambda,K}$ is a compact set, so we may assume that $V_i \rightarrow V \in \overline{\mathbf{B}}^\F_{\lambda,K}$. Since $\partial D_{large}(V_i)$ and $D_{small}(V_i)$ are compact, there exist  $v^1_i \in D_{small}(V_i)$ and $v^2_i \in \partial D_{larget}(V_i)$ such that
        \begin{equation}
            d(V_i) = \mathrm{dist}(v^1_i, v^2_i).
        \end{equation}
        By the compactness of $\overline{B}^{p+1}$ and $d(V_i) \rightarrow 0$, up to subsequence, we may assume that there is $v \in \overline{B}^{p+1}$, $v^1_i \rightarrow v$ and $v^2_i \rightarrow v$. By the closedness of $\overline{\mathbf{B}}^\F_{\lambda + 0.5, K}$ and $\partial \overline{\mathbf{B}}^\F_{\lambda + 1, K}$, one can easily verify that $v \in D_{small}(V) \cap \partial D_{large}(V)$, which contradicts to Claim 1. Thus, there exists a positive lower bound. $\hfill\blacksquare$\\

        Now, let's consider
        \begin{equation}
            P := \set{(V, v) \in \overline{\mathbf{B}}^\F_{\lambda, K} \times \overline{B}^{p+1} | v \neq m_{\bar{k}(K)}(V), v \in D_{small}(V)}.
        \end{equation}
        For any $(V, v) \in P$, $A^V_{\bar k(K)}(\phi^V_{\lambda, K}(v, t))$ is strictly decreasing with respect to $t > 0$ since $v \neq m_{\bar{k}(K)}(V)$. Therefore, we can define
        \begin{equation}
            \delta A_{\lambda, K}(V, v) := \lim_{t \rightarrow \infty} (A^V_{\bar k}(\phi^V_{\lambda, K}(v, 0)) - A^V_{\bar k}(\phi^V_{\lambda, K}(v, t))) > 0
        \end{equation}
        
        \textbf{Claim 3:} $\delta A_{\lambda, K}(V, v)$ has a uniform positive lower bound $2c(\lambda, K) > 0$ on $P$.

        Indeed, by the definition of $\phi^V_{\lambda, K}(\cdot, t)$, we know that,
        \begin{equation}
             \mathbf{F}(\phi^V_{\lambda, K}(v, t), \partial D_{large}(V)) \rightarrow 0,
        \end{equation}
        as $t \rightarrow \infty$. By Lemma \ref{lem:grad} and Claim $2$, we obtain for $\delta A_{\lambda, K}$ a uniform lower bound
        \begin{equation}
             2c(\lambda, K) := \frac{c_{0, \bar k(K)}}{2} \tilde d(\lambda, K)^2\,.
        \end{equation}
        $\hfill\blacksquare$\\

        Finally, to conclude, it suffices to find $T = T(\eta, K, \lambda) > 0$ such that for all $(V, v)\in P$ and $|v -m_{\bar{k}(K)}(V)| \geq \eta$,
        \begin{equation}
            A^V_{\bar k(K)}(\phi^V_{\lambda, K}(v, T)) < A^V_{\bar k(K)}(\phi^V_{\lambda, K}(v, 0)) - c(\lambda, K)\,.
        \end{equation}
        
        Following the proof of \cite[Lemma~4.5]{marques_morse_2016}, we argue by contradiction. Suppose there exists a sequence $(V_i, v_i) \in P$ and $|v_i - m_{\bar{k}(K)}(V_i)| \geq \eta$, such that
        \begin{equation}
            A^{V_i}_{\bar k(K)}(\phi^{V_i}_{\lambda, K}(v_i, t)) \geq A^{V_i}_{\bar k(K)}(\phi^{V_i}_{\lambda, K}(v_i, 0)) - c(\lambda, K).
        \end{equation}
        for any $t \in [0, i]$. By compactness, we may assume that $V_i \rightarrow V$ and $v_i \rightarrow v$ with $(V, v) \in P$ and $|v - m_{\bar{k}(K)}(V)| \geq \eta$. Hence, for any $t \geq 0$, 
        \begin{equation}
            A^V_{\bar k(K)}(\phi^V_{\lambda, K}(v, t)) \geq A^V_{\bar k(K)}(\phi^V_{\lambda, K}(v, 0)) - c(\lambda, K),
        \end{equation}
        contradicting to Claim 3.
    \end{proof}

    \begin{lemma}\label{lem:homotopy}
        Given a constant $\lambda > 0$, a nonempty finite subset $K \subset \mathbb{N}^+$ (assume that $\mathbf{B}^\F_{\lambda, K} \neq \emptyset$) and $c(\lambda, K)$ the constant in Lemma \ref{lem:mod}, for any $\delta \in (0, c(\lambda, K) / 4)$, any $l \in \set{0, 1, \cdots, p}$ and any continuous map,
        \begin{equation}
            \Theta: X^{l} \rightarrow \mathbf{B}^\F_{\lambda, K},
        \end{equation}
        where $X^l$ is an $l$-dimensional finite simplicial complex, there exists a homotopy map
        \begin{equation}
            H^{\Theta, \delta}_{\lambda,K} : X^l \times [0, 1] \rightarrow \mathbf{B}^\F_{\lambda+1, K},
        \end{equation}
        such that
        \begin{enumerate}
            \item $H^{\Theta, \delta}_{\lambda,K}(\cdot, 0) = \Theta(\cdot)$;
            \item $\forall x \in X^l, \|H^{\Theta, \delta}_{\lambda,K}(x, 1)\|(M) \leq \|\Theta(x)\|(M) - \tilde c(\lambda, K)$;
            \item $\forall x \in X^l, t\in [0,1], \|H^{\Theta, \delta}_{\lambda,K}(x, t)\|(M) - \|\Theta(x)\|(M) \leq \delta$.
        \end{enumerate}
        Here, $\tilde c(\lambda, K) = c(\lambda, K)/2$. More precisely,
        \begin{itemize}
            \item For $t \in [0, 1/2]$, $\F(H^{\Theta, \delta}_{\lambda,K}(x, t), \Theta(x)) \leq \delta$;
            \item For $t \in [1/2, 1]$, there exists $v(x) \in \overline{B}^{p+1}$, such that
                    \begin{equation}
                        H^{\Theta, \delta}_{\lambda, K}(x, t) = \phi^{\Theta(x)}_{\lambda, K}(v(x), (2t - 1)T)\,.
                    \end{equation}
        \end{itemize}
        
    \end{lemma}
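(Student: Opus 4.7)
The plan is to build $H^{\Theta,\delta}_{\lambda,K}$ as a concatenation of two stages. The first stage, on $t \in [0, 1/2]$, is a small deformation of the form $(x,t) \mapsto (F^{\bar k(K)}_{2t \cdot v(x)})_\#(\Theta(x))$ that moves $\Theta(x)$ to $(F^{\bar k(K)}_{v(x)})_\#(\Theta(x))$ via a continuous selection $v : X^l \to \overline{B}^{p+1}$ of very small magnitude. The second stage, on $t \in [1/2, 1]$, is the gradient flow $\phi^{\Theta(x)}_{\lambda,K}(v(x), \cdot)$ of Lemma~\ref{lem:mod}, which produces a quantitative drop in mass provided $v(x)$ stays a definite distance from the maximum $m_{\bar k(K)}(\Theta(x))$.

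The heart of the construction is the choice of $v$. Since $\Theta$ and $m_{\bar k(K)}$ are continuous and $X^l$ is a finite $l$-dimensional simplicial complex with $l \leq p$, the image $(m_{\bar k(K)} \circ \Theta)(X^l) \subset B^{p+1}_{c_0/\sqrt{10}}(0)$ has Hausdorff dimension at most $l \leq p$, hence Lebesgue measure zero in $\R^{p+1}$. For any $\sigma > 0$ I can therefore pick a point $c \in \overline{B}^{p+1}_\sigma(0) \setminus (m_{\bar k(K)} \circ \Theta)(X^l)$ and set $v(x) \equiv c$; compactness of $X^l$ then yields a uniform lower bound $|v(x) - m_{\bar k(K)}(\Theta(x))| \geq \eta > 0$.

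Next I would choose $\sigma = \sigma(\delta, \lambda, K)$ small enough that for all $s \in [0,1]$ and all $V \in \overline{\mathbf{B}}^\F_{\lambda, K}$ the diffeomorphism $F^{\bar k(K)}_{sc}$ moves $V$ within $\F$-distance $\delta$ of itself, changes mass by at most $\delta$, and lands the endpoint $(F^{\bar k(K)}_c)_\#(V)$ inside $\overline{\mathbf{B}}^\F_{\lambda + 0.5, K}$. This is possible because $F^{\bar k(K)}$ is smooth with $F^{\bar k(K)}_0 = \mathrm{Id}$, because $\overline{\mathbf{B}}^\F_{\lambda, K}$ is compact, and because the finite intersection defining $\overline{\mathbf{B}}^\F_{\lambda+0.5, K}$ has a uniform positive gap of $2^{-\lambda}(1 - 2^{-1/2}) \min_{k \in K} \varepsilon_k$ beyond $\overline{\mathbf{B}}^\F_{\lambda, K}$. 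Then set
\begin{equation*}
H^{\Theta,\delta}_{\lambda, K}(x, t) = \begin{cases} (F^{\bar k(K)}_{2t \cdot v(x)})_\#(\Theta(x)), & t \in [0, 1/2],\\ (F^{\bar k(K)}_{\phi^{\Theta(x)}_{\lambda, K}(v(x), (2t-1)T)})_\#(\Theta(x)), & t \in [1/2, 1], \end{cases}
\end{equation*}
where $T = T(\eta, K, \lambda)$ is furnished by Lemma~\ref{lem:mod}; continuity at $t = 1/2$ is immediate from $\phi^{\Theta(x)}_{\lambda, K}(v(x), 0) = v(x)$. Property~(1) is tautological. Property~(3) holds in the first half by the choice of $\sigma$ and in the second half because $A^{\Theta(x)}_{\bar k(K)}$ is monotone decreasing along its own negative gradient flow. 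For property~(2), Lemma~\ref{lem:mod} applied with $V = \Theta(x)$ gives a drop of at least $c(\lambda, K)$ during $t \in [1/2,1]$; combined with the at-most-$\delta$ increase in the first half and the assumption $\delta < c(\lambda, K)/4$, the net decrease is at least $\tilde c(\lambda, K) = c(\lambda, K)/2$. The image stays in $\mathbf{B}^\F_{\lambda+1, K}$: in the first half by the $\F$-closeness of size $\delta$, and in the second half because the driving vector field of $\phi^{\Theta(x)}_{\lambda, K}$ contains the factor $\F((F^{\bar k(K)}_\cdot)_\#(\Theta(x)), (\mathbf{B}^\F_{\lambda+1, K})^c)$, which vanishes at the boundary and keeps the flow trapped.

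The main subtlety, and the only place where the dimensional constraint $l \leq p$ is used, is the construction of $v$: we need $|v(x)|$ small enough that the first stage is a $\delta$-controlled deformation, yet $v(x)$ must stay uniformly away from $m_{\bar k(K)}(\Theta(x))$ so that the gradient flow produces a definite mass drop. The measure-zero argument above delivers such a $v$ precisely when $l \leq p < p+1$, and this is exactly the dimensional obstruction that will eventually force the Morse index bound $p$ in the application to the Deformation Theorem.
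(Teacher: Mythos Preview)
Your two-stage structure---a small $\F$-controlled push by $F^{\bar k(K)}_{v(x)}$ on $[0,1/2]$, followed by the modified gradient flow of Lemma~\ref{lem:mod} on $[1/2,1]$---matches the paper's proof exactly, and your verification of properties (1)--(3) and of the containment in $\mathbf{B}^{\F}_{\lambda+1,K}$ is correct.

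There is, however, a genuine gap in your construction of $v$. You assert that $(m_{\bar k(K)}\circ\Theta)(X^l)$ has Hausdorff dimension at most $l$, but continuous maps can raise Hausdorff dimension arbitrarily (think of Peano curves). The lemma allows $\Theta$ to be merely $\F$-continuous, and although $m_{\bar k(K)}$ is Lipschitz in $\F$ (from the uniform Hessian bounds), the composite $m_{\bar k(K)}\circ\Theta$ is still only continuous; nothing prevents it from being surjective onto a small ball in $\mathbb{R}^{p+1}$, in which case no \emph{constant} $c$ avoids the image. So the constant-selection argument fails as written.

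The repair is cheap and leaves the rest of your argument untouched: let $v$ depend on $x$. Approximate $f:=m_{\bar k(K)}\circ\Theta$ uniformly by a simplicial map $\tilde f$ on a fine subdivision of $X^l$ with $\|f-\tilde f\|_\infty<\sigma/2$; then $\tilde f(X^l)$ is a finite union of affine $l$-simplices in $\mathbb{R}^{p+1}$ and hence has Lebesgue measure zero. Pick $c\in B^{p+1}_{\sigma/2}\setminus\tilde f(X^l)$ and set $v(x):=c+f(x)-\tilde f(x)$. Then $|v(x)|<\sigma$, $v(x)-f(x)=c-\tilde f(x)\neq 0$, and compactness of $X^l$ gives the uniform lower bound $\eta>0$. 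The first stage becomes $(F^{\bar k(K)}_{2t\,v(x)})_\#\Theta(x)$, which still satisfies the $\F$-closeness bound since $|2t\,v(x)|<\sigma$. This is precisely what the paper does by invoking the construction of $\hat H^i$ from \cite[Theorem~5.1]{marques_morse_2016}, which likewise produces an $x$-dependent endpoint $\hat H(x,1)$ rather than a constant.
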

    \begin{proof}
        Firstly, since $\F(\overline{\mathbf{B}}^\F_{\lambda, K}, \partial \overline{\mathbf{B}}^\F_{\lambda+0.5, K}) > 0$, there exists $\delta_0 \in (0, \delta)$ such that for any $V \in \overline{\mathbf{B}}^\F_{\lambda, K}$, if a varifold $W$ satisfies that $\F(V, W)\leq \delta_0$, then 
        \begin{equation}
            \begin{aligned}
                \|W\|(M) &\leq \|V\|(M) + \delta\,,\\
                W &\in \overline{\mathbf{B}}^\F_{\lambda+0.5, K}\,.
            \end{aligned}
        \end{equation}
        By the compactness of $\overline{\mathbf{B}}^\F_{\lambda,K}$ and $\overline{B}^{p+1}$, We can also choose $\delta_1 > 0$ such that 
        \begin{equation}
            \F((F^{\bar k(K)}_{v})_\#(V), V) \leq \delta_0 < \delta,
        \end{equation}
        for any $V \in \overline{\mathbf{B}}^\F_{\lambda,K}, v \in \overline{B}^{p+1}(\delta_1)$.
    
        Since $\Sigma_{\bar k(K)}$ is $(p + 1)$-unstable, and $\forall x \in X^l, \Theta(x) \in \overline{\mathbf{B}}^\F_{\lambda+0.5, K} \subset \overline{\mathbf{B}}^\F_{2\varepsilon_{\bar k(K)}}(\Sigma_{\bar{k}(K)})$, we can follow the construction of $\hat H^i$ in the proof of \cite[Theorem~5.1]{marques_morse_2016} and obtain a homotopy map
        \begin{equation}
            \hat H : X^l \times [0,1] \rightarrow \overline{B}^{p+1}(\delta_1),
        \end{equation}
        such that $\hat H(\cdot, 0) = 0$ and
        \begin{equation}
            \inf_{x\in X^l}|m_{\bar k(K)}(\Theta(x)) - \hat H(x,1)| \geq \eta > 0\,,
        \end{equation}
        for some $\eta = \eta(\delta, K, \Theta, \lambda) > 0$. 
        
        Finally, applying Lemma \ref{lem:mod} with $v = \hat H(x, 1)$ for $\Theta(x)$, we can obtain
        \begin{equation}
            H^{\Theta, \delta}_{\lambda, K}(x, t) = \begin{cases}
                (F^{\bar k}_{\hat H(x, 2t)})_{\#} \Theta(x) & t \in [0, 1/2]\\
                \phi^{\Theta(x)}_{\lambda, K}(\hat H(x, 1), (2t - 1)T) & t \in (1/2, 1]\,,
            \end{cases}
        \end{equation}
        which fulfills all the conditions.
    \end{proof}

    \begin{theorem}[Deformation Theorem A]\label{Thm:deform}
        Given a min-max sequence $\set{\Phi_i}_{i \in \mathbb{N}}$ for $\omega_p$, where $X_i = \mathrm{dmn}(\Phi_i)$ has dimension no greater than $p$, then up to a subsequence, there exists another sequence $\set{\Psi_i}$ such that
        \begin{enumerate}
            \item $\Psi_i$ is homotopic to $\Phi_i$ in the $\F$-topology;
            \item $\L(\set{\Psi_i}_{i\in \mathbb{N}}) = \omega_p$;
            \item There exists a positive function $\bar \varepsilon: \mathcal{U}(\omega_p) \rightarrow \mathbb{R}^+$ satisfying the following property. For any $\Sigma \in \mathcal{U}(\omega_p)$, there exists $i_0\in \mathbb{N}$ such that for all $i \geq i_0$,
            \begin{equation}
                |\Psi_i|(X_i) \cap \mathbf{B}^\F_{\bar \varepsilon(\Sigma)}(\Sigma) = \emptyset\,.
            \end{equation}
            Hence, $\mathbf{C}(\set{\Psi_i})\cap \mathcal{APR}_p\subset \mathcal{S}(\omega_p)$.
        \end{enumerate}
    \end{theorem}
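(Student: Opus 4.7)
The plan is to construct $\Psi_i$ by inducting on the skeleton of $\mathrm{dmn}(\Phi_i)$ (the ``hierarchical'' aspect), applying Lemma \ref{lem:homotopy} cell-by-cell, while using the exhaustion $\mathcal{U}(\omega_p) = \bigcup_j \mathcal{U}_j(\omega_p)$ of Lemma \ref{lem:union_compact} to handle more and more unstable varifolds as $i \to \infty$. First I would enumerate the $(p+1)$-unstable varifolds as a sequence $\set{\Sigma_k}_{k\geq 1}$ with associated quadruples $(\varepsilon_k, c_{0,k}, \set{F^k_v}, m_k)$, chosen so that every compact piece $\mathcal{U}_j(\omega_p)$ is covered by $\bigcup_{k\leq k_j}\mathbf{B}^\F_{\varepsilon_k/2}(\Sigma_k)$. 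For each index $i$, fix $N_i\to\infty$ slowly and $\delta_i\to 0$ summably. Then Almgren-interpolate $\Phi_i$ to a sweepout $\Phi_i'$ on a fine cubical subcomplex $X_i'$, fine enough that any $p$-cell whose image meets $\mathbf{B}^\F_{\varepsilon_k}(\Sigma_k)$ for some $k\leq N_i$ is entirely contained in $\mathbf{B}^\F_{0,\set{k}}$ for that $k$.

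The core step is a hierarchical induction on skeleton dimension $l=0,1,\dots,p$. At level $l$, I inspect every $l$-cell $\sigma$ of $X_i'$ whose current image lies entirely in some $\mathbf{B}^\F_{l,K}$ with $K \subset \set{1,\dots,N_i}$ maximal, and apply Lemma \ref{lem:homotopy} with $\lambda=l$, that $K$, and $\delta=\delta_i$: this produces a homotopy supported on $\sigma$, staying inside $\mathbf{B}^\F_{l+1,K}$, dropping the final mass by at least $\tilde c(l,K)$ and adding at most $\delta_i$ during the deformation. The nesting $\mathbf{B}^\F_{l,K} \subset \mathbf{B}^\F_{l+1,K} \subset \bigcap_k \mathbf{B}^\F_{2\varepsilon_k}(\Sigma_k)$ is what makes the induction consistent: after level $l-1$, the boundary $\partial\sigma$ has been pushed out of $\mathbf{B}^\F_{l-1,K}$ but may still lie in $\mathbf{B}^\F_{l,K}$, so the level-$l$ deformation of the interior of $\sigma$ does not conflict with what happened on $\partial\sigma$. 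Summability of $\delta_i$ over cells, and the uniform bound $l\leq p$ on levels, keep the total mass cost arbitrarily small.

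The main obstacle is the continuity of the cell-by-cell deformation across all of $X_i'$: when $x$ moves between cells, the index set $K(x)$ of ``active balls'' containing the current image is only upper semicontinuous, so the chosen diffeomorphism family $F^{\bar k(K)}_\cdot$ and target ball $\mathbf{B}^\F_{l+1,K}$ change discontinuously in principle. My strategy is to run each level via a globally defined choice $x\mapsto \hat H(x,\cdot)$ into $\overline B^{p+1}(\delta_1)$ (the first half of Lemma \ref{lem:homotopy}) followed by the cell-dependent gradient flow $\phi^{\Phi_i'(x)}_{l,K(x)}$; the latter's vector field depends continuously on the base varifold and decays to zero at $\partial \mathbf{B}^\F_{l+1,K}$ through the factor $\F((F_u)_\#(V),(\mathbf{B}^\F_{\lambda+1,K})^c)$ in its very definition. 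Across cells where $K$ shrinks to a subset $K'\subsetneq K$, the image of $\sigma$ also lies in $\mathbf{B}^\F_{l,K'}$, so I can interpolate continuously in $x$ by a bump function turning off the components of $\hat H$ corresponding to indices in $K\setminus K'$. The extra radius budget contained in the jump from $\lambda$ to $\lambda+1$ is what leaves enough slack to absorb these interpolations without violating the target-ball constraint.

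Concatenating the level homotopies yields a homotopy from $\Phi_i$ to $\Psi_i$ with $\sup_x\M(\Psi_i(x)) \leq \sup_x\M(\Phi_i(x)) + O(\delta_i)$, which gives (1) and (2) as $\delta_i\to 0$. For (3), fix any $\Sigma_k\in\mathcal{U}(\omega_p)$: once $i\geq i_0$ with $k\leq N_i$, every cell whose image ever entered $\mathbf{B}^\F_{\varepsilon_k}(\Sigma_k)$ had its final mass dropped by at least $\tilde c(l,\set{k})$ at some level $l\leq p$, and subsequent levels add at most $O(\delta_i)$. Since $\|\Sigma_k\|(M)=\omega_p$ and mass is $\F$-continuous, this mass deficit forces a definite $\F$-separation $\bar\varepsilon(\Sigma_k)>0$ of $|\Psi_i(x)|$ from $\Sigma_k$, uniform in $i\geq i_0$. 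Hence $|\Psi_i|(X_i)\cap \mathbf{B}^\F_{\bar\varepsilon(\Sigma_k)}(\Sigma_k) = \emptyset$ for $i\geq i_0$, yielding (3) and $\mathbf{C}(\set{\Psi_i})\cap\mathcal{APR}_p\subset\mathcal{S}(\omega_p)$.
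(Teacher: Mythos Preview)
Your overall strategy—covering $\mathcal{U}(\omega_p)$ by a locally finite family of balls indexed by $\{\Sigma_k\}$, refining the complex, and inducting on the skeleton dimension while invoking Lemma~\ref{lem:homotopy}—matches the paper's architecture. The gap is in your argument for property~(3), and it is exactly what the paper calls the \emph{restartability} problem.

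Your claim is that any point whose image enters $\mathbf{B}^\F_{\varepsilon_k}(\Sigma_k)$ has its \emph{final} mass dropped by a definite amount, and hence is $\F$-separated from $\Sigma_k$. This is correct for points that are carried to the \emph{end} of some Lemma~\ref{lem:homotopy} homotopy. But when you glue the level-$l$ homotopy on a cell $\sigma$ to the already-constructed map on $\partial\sigma$, the resulting slice $\Psi_i|_\sigma$ does not consist only of endpoints: the seam between the region you deform (call it $Y$) and the region you leave alone is realized, after reparametrization, by the \emph{entire trajectory} $t\mapsto H^{\Theta,\delta}_{l,K}(y,t)$ for $y\in\partial Y$, $t\in[0,1]$. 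Points $y\in\partial Y$ need not have been deformed at any earlier level—they can simply be points of the original $\Phi_i$ that happened to lie just outside the inner neighborhood—so their mass can be as large as $\omega_p+O(\delta_i)$. For such $y$ and intermediate $t$, the mass of $H^{\Theta,\delta}_{l,K}(y,t)$ is still near $\omega_p$, and your mass-deficit argument gives no $\F$-separation from $\Sigma_k$. The gradient-flow half of the homotopy decreases mass monotonically, but nothing prevents it from simultaneously pushing the varifold $\F$-\emph{closer} to some $\Sigma\in\mathcal{U}(\omega_p)$.

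This is precisely what Step~3 of the paper's proof is for. There one constructs, for each $\Sigma$, a finite increasing sequence $a_1(\Sigma)=2\le a_2(\Sigma)\le\cdots\le a_{p+1}(\Sigma)$ and thresholds $\varepsilon_q(\Sigma)>0$ so that: if a point starts outside $\mathbf{B}^\F_{\eta(\Sigma)/a_q(\Sigma)}(\Sigma)$ with mass at most $\omega_p+\varepsilon_q(\Sigma)$, then the \emph{whole trajectory} of the level homotopy stays outside the smaller ball $\mathbf{B}^\F_{\eta(\Sigma)/a_{q+1}(\Sigma)}(\Sigma)$. The proof uses Lemma~\ref{lem:grad} (the concavity estimate $A^V(x(L))-A^V(x(0))\le -\tfrac{c_0}{2}|x(L)-x(0)|^2$) together with Lemma~\ref{lem:dist_F} (comparing $\F$-distance of pushforwards to distance in the parameter ball): if the trajectory did re-enter the inner ball, it would have traversed a definite parameter distance, forcing the mass to drop below $\omega_p-\varepsilon_q(\Sigma)$, which is calibrated to be impossible near $\Sigma$. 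Your proposal invokes neither lemma for this purpose and has no substitute mechanism; without it the induction at level $l+1$ cannot be restarted, because you cannot certify that the new boundary data (which now includes these trajectory arcs) lies outside a definite neighborhood of $\mathcal{U}(\omega_p)$. Incidentally, your statement that ``after level $l-1$, the boundary $\partial\sigma$ has been pushed out of $\mathbf{B}^\F_{l-1,K}$'' is not what Lemma~\ref{lem:homotopy} provides: the lemma confines the homotopy \emph{inside} $\mathbf{B}^\F_{\lambda+1,K}$ and drops mass; exit from a neighborhood of $\mathcal{U}(\omega_p)$ is a separate consequence (property~(5) in the paper's Step~2) requiring the auxiliary scale function $\eta$, which your sketch also omits.
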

    
    \begin{proof}
        The proof consists of 4 steps.\\
    
        \paragraph*{\textbf{Step 1.}} We construct a (finite or countable) sequence of $(p + 1)$-unstable minimal hypersurfaces $\set{\Sigma_k} \subset \mathcal{U}(\omega_p)$ and their associated quadruples $\set{(\varepsilon_k, c_{0, k}, \set{F^k_v}, m_k)}$ such that the following conditions hold.
        \begin{itemize}
            \item $\forall k_1 \neq k_2, \mathbf{B}^\F_{\varepsilon_{k_1}}(\Sigma_{k_1}) \not\subset \mathbf{B}^\F_{\varepsilon_{k_2}}(\Sigma_{k_2})$;
            \item $\mathcal{U}(\omega_p) \subset \bigcup^\infty_{k=1} \mathbf{B}^\F_{\varepsilon_{k}}(\Sigma_k)$ and moreover, $\forall m \in \mathbb{N}^+, \exists k_m \in \mathbb{N}^+$,
            \begin{equation}
                \mathcal{U}_m(\omega_p) \subset \bigcup^{k_m}_{i=1} \mathbf{B}^\F_{\varepsilon_{i}}(\Sigma_{i})\,;
            \end{equation}
            \item For any fixed $k$, $\bar K(\Sigma_k) := \set{k' | \mathbf{B}^\F_{2\varepsilon_{k'}}(\Sigma_{k'}) \cap \mathbf{B}^\F_{2\varepsilon_{k}}(\Sigma_k) \neq \emptyset}$ has
            \begin{equation}
                \#\bar K(\Sigma_k)<\infty\,.
            \end{equation}
            
    %        \item $\lim_{k \rightarrow \infty} \mathrm{sup}_{v\in \bar{B}^{p+1}(k)}\|F^k_v - \mathrm{Id}\|_{C^1(M)} = 0$
        \end{itemize}
    
        For each $\Sigma \in \mathcal{U}(\omega_p)$, since it is $(p + 1)$-unstable, we can associate it with a quadruple $(\varepsilon_\Sigma, c_{0, \Sigma}, \set{F^\Sigma_v}, m_\Sigma)$ and then $\set{\mathbf{B}^\F_{\varepsilon_\Sigma}(\Sigma)}$ covers $\mathcal{U}(\omega_p)$. Note that we can always take a smaller $\varepsilon_\Sigma$ such that $\Sigma$ is still $(p + 1)$-unstable with the new quadruple.

        If $\mathcal{U}(\omega_p)$ itself is compact, we can find a finite cover $\set{\Sigma_k}$ with the quadruples above satisfying all the conditions.

        Otherwise, by Lemma \ref{lem:union_compact}, $\mathcal{U}(\omega_p) = \bigcup_m \mathcal{U}_m(\omega_p)$, where $\mathcal{U}_m(\omega_p)$ is compact. Let's construct the sequence inductively in $m$. For $m = 1$, for each $\Sigma \in \mathcal{U}_1(\omega_p)$, we take $\varepsilon_\Sigma$ to be smaller than $\frac{1}{2}$, and then take a finite cover $\set{\Sigma_k}^{k_1}_{k = 1}$ of $\mathcal{U}_1(\omega_p)$ with the smallest $k_1$. Moreover, we also have
        \begin{equation}
             \bigcup^{k_1}_{k=1} \mathbf{B}^\F_{2\varepsilon_k}(\Sigma_k) \subset \mathcal{U}_2(\omega_p)\,.
        \end{equation}
        Inductively, for $m > 1$, $\mathcal{U}_{m-1}(\omega_p)$ is covered by $\set{\Sigma_k}^{k_{m - 1}}_{k = 1}$, so we only need to consider the compact set $\mathcal{R}_m = \mathcal{U}_m(\omega_p)\backslash \bigcup^{k_{m - 1}}_{k = 1} \mathbf{B}^\F_{\varepsilon_k}(\Sigma_k)$. For each $\Sigma\in \mathcal{R}_m$, we take $\varepsilon_\Sigma$ to be smaller than $\frac{1}{m(m + 1)}$, and then take a finite cover $\set{\Sigma_k}^{k_m}_{k = 1 + k_{m - 1}}$ with the smallest $k_m$. Note that we still have
        \begin{equation}
             \bigcup^{k_m}_{k=1} \mathbf{B}^\F_{2\varepsilon_k}(\Sigma_k) \subset \mathcal{U}_{m + 1}(\omega_p)\,.
        \end{equation}
        It follows that this procedure generates a sequence $\set{\Sigma_k}_{k \in \mathbb{N}}$ fulfilling all the conditions.\\
    
        \paragraph*{\textbf{Step 2.}} We construct a function $\eta: \mathcal{U}(\omega_p) \rightarrow (0, 1]$, such that
        \begin{itemize}
            \item $\mathcal{N}_\eta := \bigcup_{\Sigma \in \mathcal{U}(\omega_p)} \mathbf{B}^\F_{2\eta(\Sigma)}(\Sigma) \subset \bigcup_{k} \mathbf{B}^\F_{\varepsilon_{k}}(\Sigma_k)$ and $\mathcal{N}^i_\eta := \bigcup_{\Sigma \in \mathcal{U}_m(\omega_p)} \mathbf{B}^\F_{2\eta(\Sigma)}(\Sigma) \subset \bigcup^{k_m}_{k = 1} \mathbf{B}^\F_{\varepsilon_{k}}(\Sigma_k)$.
            \item On each $\mathcal{U}_m(\omega_p)$, $\eta$ has a positive lower bound $\tilde \eta_m > 0$.
            \item For any $\lambda \in \set{0, 1, \cdots, p}, l \in \set{0, 1, \cdots, p} , K \subset \mathbb{N}, \delta \in (0, c(\lambda, K)/4)$ and any continuous map $\Theta: X^{l} \rightarrow \mathcal{N}_\eta \cap \mathbf{B}^\F_{\lambda, K} (\mathcal{N}_\eta \cap \mathbf{B}^\F_{\lambda, K} \neq \emptyset)$, there exists a homotopy map $H^{\Theta, \delta}_{\lambda, K}:~X^l \times [0, 1] \rightarrow \mathbf{B}^\F_{\lambda+1, K}$, such that
            \begin{enumerate}
                \item $H^{\Theta, \delta}_{\lambda, K}(\cdot, 0) = \Theta(\cdot)$.
                \item $\|H^{\Theta, \delta}_{\lambda, K}(x, t)\|(M) - \|\Theta(x)\|(M) \leq \delta, \forall t\in[0,1]$.
                \item For $t \in [0, 1/2]$, $\F(H^{\Theta, \delta}_{\lambda,K}(x, t), \Theta(x)) \leq \delta$.
                \item For $t \in [1/2, 1]$, there exists $v(x) \in \overline{B}^{p+1}$, such that
                    \begin{equation}
                        H^{\Theta, \delta}_{\lambda, K}(x, t) = \phi^{\Theta(x)}_{\lambda, K}(v(x), (2t - 1)T)\,.
                    \end{equation}
                \item $H^{\Theta, \delta}_{\lambda, K}(\cdot, 1) \not\in \mathcal{N}_{\eta}$.
            \end{enumerate}
        \end{itemize}
        
        \textbf{Claim 1.} There exists a \textit{continuous} positive function $\eta_1: \mathcal{U}(\omega_p)\rightarrow \mathbb{R}^+$ such that for any $i \in \mathbb{N}, \Sigma \in \mathcal{U}_m(\omega_p)$, 
        \begin{equation}
            \mathbf{B}^\F_{2\eta_1(\Sigma)}(\Sigma) \subset \bigcup^{k_m}_{k = 1} \mathbf{B}^\F_{\varepsilon_{k}}(\Sigma_k)\,.
        \end{equation}
    
        We can define $\eta_1$ inductively. 
        If $\Sigma \in \mathcal{U}_1(\omega_p)$, then 
            \begin{equation}
                \eta_1(\Sigma):=\max\set{r > 0 | \mathbf{B}^\F_{2r}(\Sigma) \subset \bigcup^{k_1}_{k = 1} \mathbf{B}^\F_{\varepsilon_{k}}(\Sigma_k)}\,.
            \end{equation}
        If $m \geq 2$ is the least number such that $\Sigma \in \mathcal{U}_m(\omega_p)\backslash \mathcal{U}_{m-1}(\omega_p)$, we define 
            \begin{equation}
                    \eta_1(\Sigma) := \min\left\{\begin{array}{ll}\max\left\{r > 0 | \mathbf{B}^\F_{2r}(\Sigma) \subset \bigcup^{k_m}_{k = 1} \mathbf{B}^\F_{\varepsilon_{k}}(\Sigma_k)\right\},\\
                    \min\left\{\eta_1(\Sigma') + \F(\Sigma, \Sigma')| \Sigma' \in \mathcal{U}_{m-1}(\omega_p)\right\}\end{array}\right\} > 0\,.
            \end{equation}
        Note that $\eta_1$ is continuous and thus has a positive lower bound on each compact set $\mathcal{U}_m(\omega_p)$. $\hfill\blacksquare$
        
        We define $\eta(\Sigma)$ to be the largest number in $(0, \min(\eta_1(\Sigma), 1)]$ such that 
        \begin{equation}
            \forall V \in \mathbf{B}^\F_{\eta(\Sigma)}(\Sigma),\lambda \in \set{0, 1, \cdots, p}, |\omega_p - \|V\|(M)| \leq \tilde{c}(\lambda, \tilde{K})/3, 
        \end{equation} 
        for any nonempty subset $\tilde K \subset \set{k|\mathbf{B}^\F_{\eta_1(\Sigma)}(\Sigma) \cap \mathbf{B}^\F_{2\varepsilon_k}(\Sigma_k) \neq \emptyset}$.
    
        To show that $\eta$ has a positive lower bound $\tilde \eta_m$ on each $\mathcal{U}_m(\omega_p)$, we observe that for any $\Sigma \in \mathcal{U}_m(\omega_p)$, $\set{k|\mathbf{B}^\F_{\eta_1(\Sigma)}(\Sigma) \cap \mathbf{B}^\F_{2\varepsilon_k}(\Sigma_k) \neq \emptyset}$ is a subset of 
        \begin{equation}
             K_m = \bigcup^{k_m}_{k = 1}\bar K(\Sigma_k)\,,
        \end{equation}
        since $\bigcup_{\Sigma\in \mathcal{U}_m(\omega_p)} \mathbf{B}^\F_{2\eta_1(\Sigma)}(\Sigma) \subset \bigcup^{k_m}_{k = 1} \mathbf{B}^\F_{\varepsilon_{k}}(\Sigma_k)$.
        By the third bullet in \textbf{Step 1}, $K_m$ is also a finite set. It follows from the compactness of $\mathcal{U}_m(\omega_p)$ that there exists a positive number $\tilde \eta_m$, such that for any $\Sigma \in \mathcal{U}_m(\omega_p)$ and any $V \in \mathbf{B}^\F_{\tilde \eta_m}(\Sigma)$,
        \begin{equation}
            |\omega_p - \|V\|(M)| \leq \min_{K \subset K_m, \lambda \in \set{0, 1, 2, \cdots, p}}\tilde{c}(\lambda, K)/3,
        \end{equation}
        and thus, $\eta(\Sigma) \geq \tilde \eta_m > 0$ on $\mathcal{U}_m(\omega_p)$.

        For the third bullet, we use Lemma \ref{lem:homotopy} to construct the homotopy map $H^{\Theta, \delta}_{\lambda,K}$. It follows immediately that conditions (1), (2), (3) and (4) are satisfied. Suppose that condition (5) doesn't hold, and then there exist some $x \in X^l$ and some $\Sigma \in \mathcal{U}(\omega_p)$, such that $H^{\Theta, \delta}_{\lambda,K}(x, 1) \in \mathbf{B}^\F_{\eta(\Sigma)}(\Sigma)\cap \mathbf{B}^\F_{\lambda+1, K}$. Hence,  by the definition of $\eta$, we have
        \begin{equation}
             \omega_p - \|H^{\Theta, \delta}_{\lambda,K}(x, 1)\|(M) < \tilde{c}(\lambda,K)/2\,.
        \end{equation}
        On the other hand, we also have $H^{\Theta, \delta}_{\lambda, K}(x, 0) = \Theta(x) \in \mathbf{B}^\F_{\eta(\Sigma)}(\Sigma)\cap \mathbf{B}^\F_{\lambda+1, K}$, and thus,
        \begin{equation}
            \|\Theta(x)\|(M) - \omega_p < \tilde{c}(\lambda, K)/2\,,
        \end{equation}
        so $\|\Theta(x)\|(M) - \|H^{\Theta, \delta}_{\lambda, K}(x, 1)\|(M) < \tilde{c}(\lambda, K)$, which gives a contradiction to (2) of Lemma \ref{lem:homotopy}.\\
    
        \paragraph*{\textbf{Step 3.}} We show that the homotopy map defined in \textbf{Step 2} would not push a varifold too close to $\mathcal{U}(\omega_p)$, provided that it is far away from $\mathcal{U}(\omega_p)$ initially.

        More precisely, we would like to show that for any $\Sigma \in \mathcal{U}(\omega_p)$, there exist two sequences $\set{\varepsilon_q(\Sigma)}^{p}_{q = 1} \subset \mathbb{R}^+$ and $\set{a_q(\Sigma)}^{p+1}_{q=1}$ with $a_1(\Sigma) = 2$ and $a_q(\Sigma) \geq 2^q$ satisfying the following property. In the third bullet of \textbf{Step 2}, for any $q \in \set{1,\dots, p}$, $\Sigma \in \mathcal{U}(\omega_p)$ with $\mathbf{B}^\F_{\lambda + 1, K}\cap \mathbf{B}^\F_{\eta(\Sigma)}(\Sigma) \neq \emptyset$, and $x \in X^l$ with $\Theta(x) \notin \mathbf{B}^\F_{\eta(\Sigma)/ a_q(\Sigma)}(\Sigma)$ as well as $\|\Theta(x)\|(M) - \omega_p \leq \varepsilon_q(\Sigma)$, we have for all $t \geq 0$ and $\delta \in (0, \min(\eta(\Sigma)/(4a_q(\Sigma)), \varepsilon_q(\Sigma)))$,
        \begin{equation}
            H^{\Theta, \delta}_{\lambda,K}(x, t) \notin \mathbf{B}^\F_{\eta(\Sigma)/a_{q+1}(\Sigma)}(\Sigma)\,.
        \end{equation}
        Moreover, in each $\mathcal{U}_m(\omega_p)$, $\varepsilon_q(\Sigma)$ has a positive lower bounds $\tilde \varepsilon^m_q$ and $a_q(\Sigma)$ has a uniform upper bounds $\tilde a^m_q$.
    
        Let's construct $a_q(\Sigma)$ and $\varepsilon_q(\Sigma)$ inductively. Note that a priori, we have $a_1(\Sigma) = 2$.
        
        For $\Sigma \in \mathcal{U}(\omega_p)$, suppose that we have defined $a_q(\Sigma)$ and, if $q > 1$, $\varepsilon_{q-1}(\Sigma)$. For any natural number $k$ with $\overline{\mathbf{B}}^\F_{p + 1, \set{k}}\cap \mathbf{B}^\F_{\eta(\Sigma)}(\Sigma) \neq \emptyset$, and $V\in \overline{\mathbf{B}}^\F_{p + 1, \set{k}}$, if $\left((F^{k}_{\cdot})_{\#}(V)\right)^{-1}\overline{\mathbf{B}}^\F_{\eta(\Sigma)/(2a_{q}(\Sigma))}(\Sigma) \neq \emptyset$ and $\left((F^{k}_{\cdot})_{\#}(V)\right)^{-1}\partial\mathbf{B}^\F_{3\eta(\Sigma)/(4a_{q}(\Sigma))}(\Sigma)\neq \emptyset$, we define
        \begin{equation}
            \begin{aligned}
                d_{q, \Sigma, k}(V) := \mathrm{dist}&\left( \left((F^{k}_{\cdot})_{\#}(V)\right)^{-1}\partial \mathbf{B}^\F_{3\eta(\Sigma)/(4a_q(\Sigma))}(\Sigma),\right. \\ &\;\;\left.\left((F^{k}_{\cdot})_{\#}(V)\right)^{-1}\overline{\mathbf{B}}^\F_{\eta(\Sigma)/(2a_{q}(\Sigma))}(\Sigma)\right)\,.
            \end{aligned}
        \end{equation}
        Otherwise, $d_{q, \Sigma, k}(V) := +\infty$. By Lemma \ref{lem:dist_F}, we have
        \begin{equation}
            d_{q, \Sigma, k}(V) \geq h_{\Sigma_k}(\eta(\Sigma)/(4a_q(\Sigma))).
        \end{equation}
        Let's fix a $k'$ such that $\overline{\mathbf{B}}^\F_{p + 1, \set{k'}}\cap \mathbf{B}^\F_{\eta(\Sigma)}(\Sigma) \neq \emptyset$ and observe any $k$ with $\overline{\mathbf{B}}^\F_{p + 1, \set{k}}\cap \mathbf{B}^\F_{\eta(\Sigma)}(\Sigma) \neq \emptyset$ is inside $\bar K(\Sigma_{k'})$ in the third bullet of \textbf{Step 1} and $\#\bar K(\Sigma_{k'}) < \infty$. Hence, we can define
        \begin{equation}
            \varepsilon_q(\Sigma):= \min_{\substack{k, \\ \overline{\mathbf{B}}^\F_{p + 1, \set{k}}\cap \mathbf{B}^\F_{\eta(\Sigma)}(\Sigma) \neq \emptyset}} \frac{c_{0, k}}{16} \left(h_{\Sigma_k}(\eta(\Sigma)/(4a_q(\Sigma)))\right)^2\,.
        \end{equation}
        
        \textbf{Claim 2.} There exists $\tilde \varepsilon^m_q > 0$, such that $\inf_{\Sigma \in \mathcal{U}_m(\omega_p)}\varepsilon_q(\Sigma)\geq \tilde\varepsilon^m_q$.

        For any fixed $\mathcal{U}_m(\omega_p)$, by the first bullet of \textbf{Step 2}, we know that
        \begin{equation}
            K'_m = \set{k| \exists \Sigma \in \mathcal{U}_m(\omega_p), \overline{\mathbf{B}}^\F_{p + 1, \set{k}} \cap \mathbf{B}^\F_{\eta(\Sigma)}(\Sigma) \neq \emptyset} \subset \bigcup^{k_m}_{k = 1} \bar K(\Sigma_k)\,,
        \end{equation}
        which implies that $\# K'_m < \infty$.

        By our assumption and the definition of $\eta(\Sigma)$, $\eta(\Sigma)/a(\Sigma)$ has a positive lower bound, say $c'_m > 0$ among all $\Sigma \in \mathcal{U}_m(\omega_p)$. Hence,
        \begin{equation}
            \begin{aligned}
                \inf_{\Sigma \in \mathcal{U}_m(\omega_p)} \varepsilon_q(\Sigma) &= \inf_{\Sigma \in \mathcal{U}_m(\omega_p)}\min_{\substack{k, \\ \overline{\mathbf{B}}^\F_{p + 1, \set{k}} \cap \mathbf{B}^\F_{\eta(\Sigma)}(\Sigma) \neq \emptyset}} \frac{c_{0, k}}{16} \left(h_{\Sigma_k}(\eta(\Sigma)/(4a_q(\Sigma)))\right)^2\\
                &= \min_{k \in K'_m} \inf_{\substack{\Sigma \in \mathcal{U}_m(\omega_p), \\ \overline{\mathbf{B}}^\F_{p + 1, \set{k}} \cap \mathbf{B}^\F_{\eta(\Sigma)}(\Sigma) \neq \emptyset}} \frac{c_{0, k}}{16} \left(h_{\Sigma_k}(\eta(\Sigma)/(4a_q(\Sigma)))\right)^2\\
                &\geq \min_{k \in K'_m} \frac{c_{0, k}}{16} h_{\Sigma_k}(c'_m)^2 > 0\,.
            \end{aligned}
        \end{equation}
        $\hfill\blacksquare$

        Let's take $a_{q + 1}(\Sigma) \geq 2 a_q(\Sigma) + 1$ to be the smallest number such that for any $V \in \overline{\mathbf{B}}^\F_{\eta(\Sigma)/a_{q + 1}(\Sigma)}(\Sigma)$, 
        \begin{equation}
            \|V\|(M) \geq \omega_p - \varepsilon_q(\Sigma)\,.
        \end{equation}

        \textbf{Claim 3.} There exists $\tilde a^m_{q + 1} > 0$, such that $\inf_{\Sigma \in \mathcal{U}_m(\omega_p)}a_{q + 1}(\Sigma)\geq \tilde a^m_{q + 1}$.

        Since $\eta(\Sigma) \leq 1$, it suffices to there exists a constant $c'_m > 0$ such that for any $\Sigma \in \mathcal{U}_m(\omega_p)$ and $V \in \overline{\mathbf{B}}^\F_{c'_m}(\Sigma)$, 
        \begin{equation}
            \|V\|(M) \geq \omega_p - \tilde \varepsilon^m_q\,.
        \end{equation}
        Then, we have $a_{q + 1}(\Sigma) \leq 1/ c'_m$ for any $\Sigma \in \mathcal{U}_m(\omega_p)$.

        Let's argue by contradiction. Suppose not and there exists a sequence $\set{\Sigma_i} \in \mathcal{U}_m(\omega_p)$ and $V_i \in \overline{\mathbf{B}}^\F_{1/i}(\Sigma)$ such that
        \begin{equation}
            \|V_i\|(M) < \omega_p - \tilde \varepsilon^m_q\,.
        \end{equation}
        By the compactness of $\mathcal{U}_m(\omega_p)$, up to a subsequence, we may assume that $\Sigma_i \rightarrow \Sigma$ and $V_i \rightarrow \Sigma$. However, $\|\Sigma\|(M) = \omega(p)$ and thus
        \begin{equation}
            \omega_p < \omega_p - \tilde \varepsilon^m_q\,,
        \end{equation}
        giving a contradiction. $\hfill \blacksquare$

        \textbf{Claim 4.} $H^{\Theta, \delta}_{\lambda,K}(x, t) \notin \mathbf{B}^\F_{\eta(\Sigma)/a_{q+1}(\Sigma)}(\Sigma)$ provided that $\Theta(x) \notin \mathbf{B}^\F_{\eta(\Sigma)/ a_q(\Sigma)}(\Sigma)$ and $\|\Theta(x)\|(M) - \omega_p \leq \varepsilon_q(\Sigma)$.

        Indeed, since $\delta < \eta(\Sigma) / (4a_q(\Sigma))$ and $\Theta(x) \notin \mathbf{B}^\F_{\eta(\Sigma)/a_q}(\Sigma)$, for $t\in[0, 1/2]$, we have $\F(H^{\Theta,\delta}_{\lambda, K}(x,t), \Sigma) \geq 3\eta(\Sigma)/(4\cdot a_k) \geq \eta(\Sigma)/a_{q+1}(\Sigma)$.

        To show that $\F(H^{\Theta, \delta}_{\lambda, K}(x,t), \Sigma) \geq \eta(\Sigma)/a_{q+1}(\Sigma)$ for $t \in (1/2, 1]$, we first note that $\delta < \varepsilon_q(\Sigma)$ and thus, $\|H^{\Theta,\delta}_{\lambda, K}(x,1/2)\|(M) \leq \omega_p + 2 \varepsilon_q(\Sigma)$. Let's argue by contradiction and suppose there exists a $t_1 \in (1/2, 1]$ such that $H^{\Theta, \delta}_{\lambda, K}(x,t_1) \in \overline{\mathbf{B}}^\F_{\eta(\Sigma)/a_{q+1}(\Sigma)}(\Sigma)$. In this case, let $k = \bar k(K)$ and
        \begin{equation}
            \begin{aligned}
                &\mathrm{dist}\left(\left((F^{k}_{\cdot})_{\#}(\Theta(x))\right)^{-1}(H^{\Theta, \delta}_{\lambda, K}(x,t_1)), \left((F^{k}_{\cdot})_{\#}(\Theta(x))\right)^{-1}(H^{\Theta, \delta}_{\lambda, K}(x,1/2))\right)\\
                \geq & \mathrm{dist}\left(\left((F^{k}_{\cdot})_{\#}(\Theta(x))\right)^{-1}(\overline{\mathbf{B}}^\F_{\eta(\Sigma)/a_{q+1}(\Sigma)}(\Sigma)), \left((F^{k}_{\cdot})_{\#}(\Theta(x))\right)^{-1}(\partial{\mathbf{B}^\F_{3\eta(\Sigma)/(4a_{q})(\Sigma)}(\Sigma)})\right)\\
                \geq & d_{q, \Sigma, k}(\Theta(x)) \geq \tilde d_{q, \Sigma, k} > 0\,.
            \end{aligned}
        \end{equation}
        It follows from Lemma \ref{lem:grad} that
        \begin{equation}
            \begin{aligned}
                \|H^{\Theta, \delta}_{\lambda, K}(x,t_1)\|(M) &\leq \|H^{\Theta, \delta}_{\lambda, K}(x,1/2)\|(M) -\frac{c_{0, k}}{2}(\tilde d_{q, \Sigma, k})^2 \\
                    &\leq \omega_p + 2\varepsilon_q(\Sigma) - 8 \varepsilon_q(\Sigma)\\
                    &= \omega_p - 6 \varepsilon_q(\Sigma)\,,
            \end{aligned}
        \end{equation}
        contradicting to the definition of $a_{q + 1}(\Sigma)$. $\hfill\blacksquare$
    
        Before the end of this step, we define 
        \begin{equation}
            \varepsilon_m := \min_{q \in \set{1, 2, \cdots, p}}\inf_{\Sigma \in \mathcal{U}_m(\omega_p)}\min(\eta(\Sigma)/(4a_q(\Sigma)), \varepsilon_q(\Sigma)) > 0\,.
        \end{equation}\\

        \paragraph*{\textbf{Step 4 [Hierarchical Deformations]}} Up to a subsequence, we construct the homotopy map $H_i: X \times [0, 1] \rightarrow \Z_n(M; \F; \Zn_2)$, such that $H_i(\cdot, 0) = \Phi_i$ and $H_i(\cdot, 1) = \Psi_i$ with all the required properties.
    
        We choose a subsequence $\set{\Phi_{j_i}}$ such that $\sup\set{\M(\Phi_{j_i}(x))} \leq \omega_p + \varepsilon_i / 2$ where $\varepsilon_i$ is defined in the end of \textbf{Step 3}. For simplicity, we still denote the sequence by $\set{\Phi_i}$.
    
        For a fixed $\Phi_i: X_i \rightarrow \Z_n(M; \F; \Zn_2)$ and a positive function $\psi: \mathcal{U}_i(\omega_p) \rightarrow \mathbb{R}^+$, we define 
        \begin{equation}
            \mathcal{N}^i_{\psi} := \bigcup_{\Sigma \in \mathcal{U}_i(\omega_p)} \mathbf{B}^\F_{2\psi(\Sigma)}(\Sigma)\,,
        \end{equation}
        and then it suffices to deform $\Phi_i$ to $\Psi_i$ such that $|\Psi_i| \cap \mathcal{N}^i_{\eta/(2a_{p+1})} = \emptyset$.
    
        To do so, we start with a finer subdivision on $X_i$ such that for each (closed) face $F$ of $X_i$,
        \begin{enumerate}
            \item If $|\Phi_i|(F) \cap \mathcal{N}^i_{\eta/2} \neq \emptyset$, then $|\Phi_i|(F) \subset \mathcal{N}^i_{\eta}$.  This is possible, since $\eta$ has a uniform lower  bound on $\mathcal{U}_i(\omega_p)$, which implies that $\F(\partial \mathcal{N}^i_{\eta}, \mathcal{N}^i_{\eta / 2}) > 0$. The set consisting of all such faces will be denoted by $\mathcal{B}$.
            \item For $F \in \mathcal{B}$, we can assign a nonempty finite set $K(F)\subset \mathbb{N}$, such that $|\Phi_i|(F)\subset \mathbf{B}^\F_{K(F)}$ and if $F \subset F'$both in $\mathcal{B}$, we have $K(F)\supset K(F')$. This can be defined inductively on the decreasing dimensions as long as the subdivision is fine enough.
        \end{enumerate}
    
        Now, we would like to construct the homotopy map $H_i$ inductively on the $k$-skeleton of $X_i$, i.e., $X^{(k)}_i$. 
        
        For $k = 0$, we apply \textbf{Step 2} with $\lambda = 0$ and $\delta < \varepsilon_i/4$ to all the $0$-cells in $\mathcal{B}$. For all the $0$-cells outside $\mathcal{B}$, we simply construct a constant homotopy map. Thus, we obtain a map $H^{(0)}_i$ defined on $X^{(0)}_i \times [0, 1]$. Moreover, $|H^{(0)}_i|(X^{(0)}_i \times 1) \cap \mathcal{N}^i_{\eta / a_1} = \emptyset$ and $|H^{(0)}_i|(x, [0, 1]) \subset \mathbf{B}^\F_{1, K(x)}$ for any $x \in \mathcal{B}^{(0)}$.

        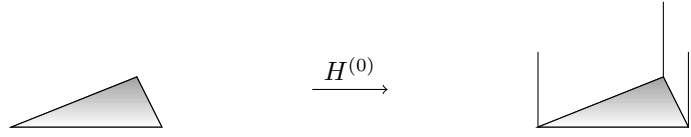
\begin{figure}[!h]
            \centering
            \begin{tikzpicture}[]
                \coordinate (A1) at (1,0,0);
                \coordinate (A2) at (-1,0,0);
                \coordinate (A3) at (0,0,-{sqrt(3)});

                \draw (A1) -- (A2) -- (A3) -- cycle;
                \draw [fill opacity=0.7, shade] (A2) -- (A3) -- (A1) -- cycle;

                % arrow
                \coordinate (a1) at (3, 0.5, 0);
                \coordinate (a2) at (4, 0.5, 0);
                \draw[->] (a1) -- node [midway, above] {$H^{(0)}$} (a2);

                \coordinate (A1') at (8,0,0);
                \coordinate (A2') at (6,0,0);
                \coordinate (A3') at (7,0,-{sqrt(3)});
                \coordinate (B1') at (8,1,0);
                \coordinate (B2') at (6,1,0);
                \coordinate (B3') at (7,1,-{sqrt(3)});

                \draw (A1') -- (B1');
                \draw (A2') -- (B2');
                \draw (A3') -- (B3');
                \draw (A1') -- (A2') -- (A3') -- cycle;
                \draw [fill opacity=0.7, shade] (A2') -- (A3') -- (A1') -- cycle;
%                \draw [fill opacity=0.7,fill=orange!80!black] (A3) -- (A4) -- (B1) -- cycle;
%                \draw [fill opacity=0.7,fill=green!30!black] (A2) -- (A3) -- (C1) -- cycle;
%                \draw [fill opacity=0.7,fill=purple!70!black] (A3) -- (A4) -- (C1) -- cycle;
            \end{tikzpicture}
            \caption{Deformation $H^{(0)}$}
        \end{figure}
            
        Inductively, suppose that we have constructed $H^{(k-1)}_i$ on $X^{(k-1)}_i$ ($k \geq 1$), and now we consider the $k$-cells in $X_i$. Fixing $F_k \in X^{(k)}_i\backslash X^{(k-1)}_i$, then $\partial F_k \in X^{(k-1)}_i$. Note that $F'_k := F_k \cup \left(\partial F_k \times [0, 1]\right)$ is homeomorphic to $F_k$ ($\cong D^k$), so we can define $\Theta$ on $F'_k$ by concatenating $H^{(k-1)}_i$ on $\partial F_k \times [0, 1]$ and $\Phi_i$ on $F_k$. Now, we would like to construct a homotopy map $\tilde H^{(k)}_i$ with initial data $\Theta$.
        
        If $F_k \notin \mathcal{B}$, then according to the definition of $\mathcal{B}$, no cell in $\partial F_k$ belongs to $\mathcal{B}$ either. Thus, the homotopy map $\tilde H^{(k)}_i$ on $F'_k$ in this case can be defined as a constant homotopy. It's worthy to note that 
        \begin{equation}
            |\tilde H^{(k)}_i(F'_k \times 1 \cup \partial F'_k \times [0, 1])| \cap \mathcal{N}^i_{\eta / a_{k + 1}} = \emptyset\,.
        \end{equation}

        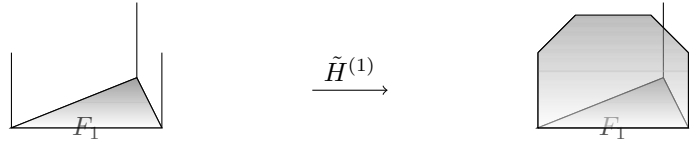
\begin{figure}[!h]
            \centering
            \begin{tikzpicture}[]
                \coordinate (A1) at (1,0,0);
                \coordinate (A2) at (-1,0,0);
                \coordinate (A3) at (0,0,-{sqrt(3)});
                \coordinate (B1) at (1,1,0);
                \coordinate (B2) at (-1,1,0);
                \coordinate (B3) at (0,1,-{sqrt(3)});
                \draw (A1) -- (B1);
                \draw (A2) -- (B2);
                \draw (A3) -- (B3);
                \draw (A1) -- (A2) -- (A3) -- cycle;
                \draw [fill opacity=0.7, shade] (A2) -- (A3) -- (A1) -- node [midway] {$F_1$} cycle;

                % arrow
                \coordinate (a1) at (3, 0.5, 0);
                \coordinate (a2) at (4, 0.5, 0);
                \draw[->] (a1) -- node [midway, above] {$\tilde H^{(1)}$} (a2);

                \coordinate (A1') at (8,0,0);
                \coordinate (A2') at (6,0,0);
                \coordinate (A3') at (7,0,-{sqrt(3)});
                \coordinate (B1') at (8,1,0);
                \coordinate (B2') at (6,1,0);
                \coordinate (B3') at (7,1,-{sqrt(3)});
                \coordinate (C1') at (7.5, 1.5, 0);
                \coordinate (C2') at (6.5, 1.5, 0);

                \draw (A1') -- (B1');
                \draw (A2') -- (B2');
                \draw (A3') -- (B3');
                \draw (B1') -- (C1');
                \draw (B2') -- (C2');
                \draw (A1') -- (A2') -- (A3') -- cycle;
                \draw [fill opacity=0.7, shade] (A2') -- (A3') -- (A1') -- node [midway] {$F_1$} cycle;
                \draw [fill opacity=0.4, shade] (A1') -- (A2') -- (B2') -- (C2') -- (C1') -- (B1') -- cycle;
%                \draw [fill opacity=0.7,fill=orange!80!black] (A3) -- (A4) -- (B1) -- cycle;
%                \draw [fill opacity=0.7,fill=green!30!black] (A2) -- (A3) -- (C1) -- cycle;
%                \draw [fill opacity=0.7,fill=purple!70!black] (A3) -- (A4) -- (C1) -- cycle;
            \end{tikzpicture}
            \caption{Deformation $\tilde H^{(1)}$}
        \end{figure}

        If $F_k \in \mathcal{B}$, similar to the refinement mentioned at the beginning, by the second condition on the subdivision, we know that $|\Theta|(F'_k) \subset \mathbf{B}^\F_{k, K(F_k)}$. We can make a subdivision on $F'_k$ such that, each (closed) $k$-dimensional face $\tilde F$ with $|\Theta|(\tilde F)\cap \mathcal{N}^i_{\eta/ a_k} \neq \emptyset$ must have $|\Theta|(\tilde F) \subset \mathcal{N}^i_{\eta}$. The union of all such $k$-dimensional faces now will be denoted by $Y$. Note that $|\Theta|(\partial Y) \cap \mathcal{N}^i_{\eta / a_k} = \emptyset$, since $|\Theta|(\partial F'_k) \cap \mathcal{N}^i_{\eta / a_k} = \emptyset$ by induction. With this new subdivision, we can define a map 
        \begin{equation}
            \hat H: F'_k \times [0, 1] \cup Y \times [1,2] \rightarrow \Z_n(M; \F; \Zn_2) 
        \end{equation}
        such that $\hat H(\cdot, t) := \Theta(\cdot)$ for $t \in [0,1]$, and $\hat H(\cdot, t+1) = H^{|\Theta|, \delta}_{k, K(F_k)}(\cdot, t)$ for $t \in [0,1]$ in \textbf{Step 2} with $\delta < \varepsilon_i$. By the construction of $Y$, we have
        \begin{equation}
            |\hat H((F'_k - Y) \times 1 \cup \partial F'_k \times [0, 1])| \cap \mathcal{N}^i_{\eta/a_{k+1}} = \emptyset\,.
        \end{equation}
        By (5) in the third bullet of \textbf{Step 2},
        \begin{equation}
            |\hat H(Y \times 2)| \cap \mathcal{N}^i_{\eta/a_{k+1}} = \emptyset\,.
        \end{equation}
        By \textbf{Step 3},
        \begin{equation}
             |\hat H(\partial Y \times [1, 2])| \cap \mathcal{N}^i_{\eta/a_{k+1}} = \emptyset\,.
        \end{equation} 
        We can derive $\tilde H^{(k)}_i: F'_k \times [0, 1] \rightarrow \Z_n(M; \F; \Zn_2)$ from $\hat H$ induced by the homeomorphism 
        \begin{equation}
            \begin{aligned}
                (F'_k \times [0,1], &\partial F'_k \times [0,1], F'_k \times 1) \cong 
                 (F'_k \times [0, 1] \cup Y \times [1,2], \\
                 & \partial F'_k \times [0,1], (F'_k\backslash Y)\times 1) \cup \left(Y \times 2\right) \cup \left(\partial Y \times [1,2]\right))\,,
            \end{aligned}
        \end{equation}
        By the property of $\hat H$, we also have
        \begin{equation}
            |\tilde H^{(k)}_i(F'_k \times 1 \cup \partial F'_k \times [0, 1])| \cap \mathcal{N}^i_{\eta / a_{k + 1}} = \emptyset\,.
        \end{equation}

        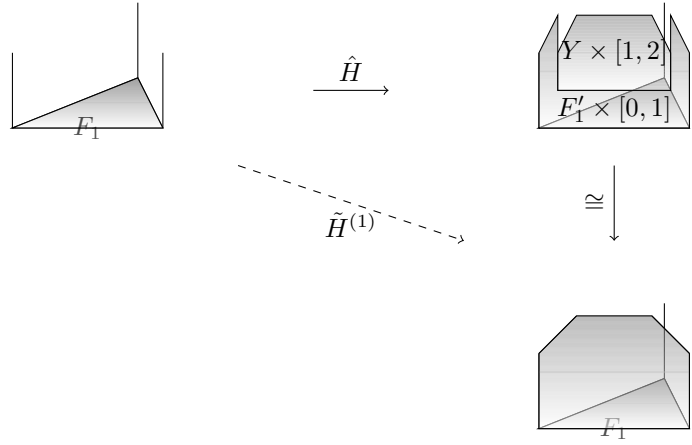
\begin{figure}[!h]
            \centering
            \begin{tikzpicture}[]
                \coordinate (A1) at (1,0,0);
                \coordinate (A2) at (-1,0,0);
                \coordinate (A3) at (0,0,-{sqrt(3)});
                \coordinate (B1) at (1,1,0);
                \coordinate (B2) at (-1,1,0);
                \coordinate (B3) at (0,1,-{sqrt(3)});
                \draw (A1) -- (B1);
                \draw (A2) -- (B2);
                \draw (A3) -- (B3);
                \draw (A1) -- (A2) -- (A3) -- cycle;
                \draw [fill opacity=0.7, shade] (A2) -- (A3) -- (A1) -- node [midway] {$F_1$} cycle;

                % arrow
                \coordinate (a1) at (3, 0.5, 0);
                \coordinate (a2) at (4, 0.5, 0);
                \draw[->] (a1) -- node [midway, above] {$\hat H$} (a2);

                \coordinate (A1') at (8,0,0);
                \coordinate (A2') at (6,0,0);
                \coordinate (A3') at (7,0,-{sqrt(3)});
                \coordinate (B1') at (8,1,0);
                \coordinate (B2') at (6,1,0);
                \coordinate (B3') at (7,1,-{sqrt(3)});
                \coordinate (C1') at (7.75, 1.5, 0);
                \coordinate (C2') at (6.25, 1.5, 0);
                \coordinate (C3') at (7.75, 0.5, 0);
                \coordinate (C4') at (6.25, 0.5, 0);
                \coordinate (C5') at (7.75, 1, 0);
                \coordinate (C6') at (6.25, 1, 0);
                \coordinate (C7') at (7.5, 1.5, 0);
                \coordinate (C8') at (6.5, 1.5, 0);

                \draw (A1') -- (B1');
                \draw (A2') -- (B2');
                \draw (A3') -- (B3');
                \draw (A1') -- (A2') -- (A3') -- cycle;
                \draw [fill opacity=0.7, shade] (A2') -- (A3') -- (A1') -- cycle;
                \draw [fill opacity=0.4, shade] (A1') -- (A2') -- (B2') -- (C2') -- (C4') -- (C3') -- (C1') -- (B1') -- cycle;
                \node at (7, 0.25, 0) {$F'_1\times [0,1]$}; 
                \draw [fill opacity=0.4, shade] (C5') -- (C3') -- (C4') -- (C6') -- (C8') -- (C7') -- cycle;
                \node at (7, 1, 0) {$Y\times [1,2]$}; 
%                \draw [fill opacity=0.7,fill=orange!80!black] (A3) -- (A4) -- (B1) -- cycle;
%                \draw [fill opacity=0.7,fill=green!30!black] (A2) -- (A3) -- (C1) -- cycle;
%                \draw [fill opacity=0.7,fill=purple!70!black] (A3) -- (A4) -- (C1) -- cycle;
                
                % arrow
                \coordinate (a3) at (7, -0.5, 0);
                \coordinate (a4) at (7, -1.5, 0);
                \draw[->] (a3) -- node [midway, left] {$\cong$} (a4);

                % arrow
                \coordinate (a5) at (2, -0.5, 0);
                \coordinate (a6) at (5, -1.5, 0);
                \draw[->, dashed] (a5) -- node [midway, left,below] {$\tilde H^{(1)}$} (a6);
                
                \coordinate (A1'') at (8,-4,0);
                \coordinate (A2'') at (6,-4,0);
                \coordinate (A3'') at (7,-4,-{sqrt(3)});
                \coordinate (B1'') at (8,-3,0);
                \coordinate (B2'') at (6,-3,0);
                \coordinate (B3'') at (7,-3,-{sqrt(3)});
                \coordinate (C1'') at (7.5, -2.5, 0);
                \coordinate (C2'') at (6.5, -2.5, 0);

                \draw (A1'') -- (B1'');
                \draw (A2'') -- (B2'');
                \draw (A3'') -- (B3'');
                \draw (B1'') -- (C1'');
                \draw (B2'') -- (C2'');
                \draw (A1'') -- (A2'') -- (A3'') -- cycle;
                \draw [fill opacity=0.7, shade] (A2'') -- (A3'') -- (A1'') -- node [midway] {$F_1$} cycle;
                \draw [fill opacity=0.4, shade] (A1'') -- (A2'') -- (B2'') -- (C2'') -- (C1'') -- (B1'') -- cycle;
            \end{tikzpicture}
            \caption{Deformations $\hat H$ and $\tilde H^{(1)}$}
        \end{figure}
        
        For both cases, we can derive $H^{(k)}_i: F_k \times [0, 1] \rightarrow \Z_n(M; \F; \Zn_2)$ from $\tilde{H}^{(k)}_i$ induced by the homeomorphism 
        \begin{equation}
            (F_k \times [0,1], F_k \times 1) \cong (F'_k \times [0,1], F'_k \times 1 \cup \partial F'_k \times [0,1]),
        \end{equation}
        satisfying that $H^{(k)}_i|_{\partial F_k \times [0,1]} = H^{(k-1)}_i|_{\partial F_k \times [0,1]}$ and $H^{(k)}_i|_{F_k}(\cdot, 0) = \Phi_i|_{F_k}$. Note that,
        \begin{equation}
            |H^{(k)}_i(F_k \times 1)| = |\tilde H^{(k)}_i(F'_k \times 1 \cup \partial F'_k \times [0, 1])|\,,    
        \end{equation}
        Therefore, we can concatenate all the $H^{(k)}_i$'s defined on $F_k \times [0, 1]$'s and obtain
        \begin{equation}
            H^{(k)}_i: X^{(k)}_i \rightarrow \Z_n(M; \F; \Zn_2).
        \end{equation}
        Moreover, we can conclude that
        \begin{equation}
            |H^{(k)}_i|(X^{(k)}_i \times 1) \cap \mathcal{N}^i_{\eta / a_{k+1}} = \emptyset\,.
        \end{equation}
        
        \begin{figure}[!h]
            \centering
            \begin{tikzpicture}[]
                \coordinate (A1) at (1,0,0);
                \coordinate (A2) at (-1,0,0);
                \coordinate (A3) at (0,0,-{sqrt(3)});
                \coordinate (B1) at (1,1,0);
                \coordinate (B2) at (-1,1,0);
                \coordinate (B3) at (0,1,-{sqrt(3)});
                \draw (A1) -- (B1);
                \draw (A2) -- (B2);
                \draw (A3) -- (B3);
                \draw (A1) -- (A2) -- (A3) -- cycle;
                \draw [fill opacity=0.7, shade] (A2) -- (A3) -- (A1) -- node [midway] {$F_1$} cycle;

                % arrow
                \coordinate (a1) at (3, 0.5, 0);
                \coordinate (a2) at (4, 0.5, 0);
                \draw[->] (a1) -- node [midway, above] {$\tilde H^{(1)}$} (a2);

                \coordinate (A1') at (8,0,0);
                \coordinate (A2') at (6,0,0);
                \coordinate (A3') at (7,0,-{sqrt(3)});
                \coordinate (B1') at (8,1,0);
                \coordinate (B2') at (6,1,0);
                \coordinate (B3') at (7,1,-{sqrt(3)});
                \coordinate (C1') at (7.5, 1.5, 0);
                \coordinate (C2') at (6.5, 1.5, 0);

                \draw (A1') -- (B1');
                \draw (A2') -- (B2');
                \draw (A3') -- (B3');
                \draw (B1') -- (C1');
                \draw (B2') -- (C2');
                \draw (A1') -- (A2') -- (A3') -- cycle;
                \draw [fill opacity=0.7, shade] (A2') -- (A3') -- (A1') -- node [midway] {$F_1$} cycle;
                \draw [fill opacity=0.4, shade] (A1') -- (A2') -- (B2') -- (C2') -- (C1') -- (B1') -- cycle;
%                \draw [fill opacity=0.7,fill=orange!80!black] (A3) -- (A4) -- (B1) -- cycle;
%                \draw [fill opacity=0.7,fill=green!30!black] (A2) -- (A3) -- (C1) -- cycle;
%                \draw [fill opacity=0.7,fill=purple!70!black] (A3) -- (A4) -- (C1) -- cycle;
                
                % arrow
                \coordinate (a3) at (7, -0.5, 0);
                \coordinate (a4) at (7, -1.5, 0);
                \draw[->] (a3) -- node [midway, left] {$\cong$} (a4);

                % arrow
                \coordinate (a5) at (2, -0.5, 0);
                \coordinate (a6) at (5, -1.5, 0);
                \draw[->, dashed] (a5) -- node [midway, left,below] {$H^{(1)}$} (a6);
                
                \coordinate (A1'') at (8,-4,0);
                \coordinate (A2'') at (6,-4,0);
                \coordinate (A3'') at (7,-4,-{sqrt(3)});
                \coordinate (B1'') at (8,-3,0);
                \coordinate (B2'') at (6,-3,0);
                \coordinate (B3'') at (7,-3,-{sqrt(3)});

                \draw (A1'') -- (B1'');
                \draw (A2'') -- (B2'');
                \draw (A3'') -- (B3'');
                \draw (A1'') -- (A2'') -- (A3'') -- cycle;
                \draw [fill opacity=0.7, shade] (A2'') -- (A3'') -- (A1'') -- node [midway] {$F_1$} cycle;
                \draw [fill opacity=0.4, shade] (A1'') -- (A2'') -- (B2'') -- (B1'') -- cycle;
            \end{tikzpicture}
            \caption{Deformations $\tilde H^{(1)}$ and $H^{(1)}$}
        \end{figure}
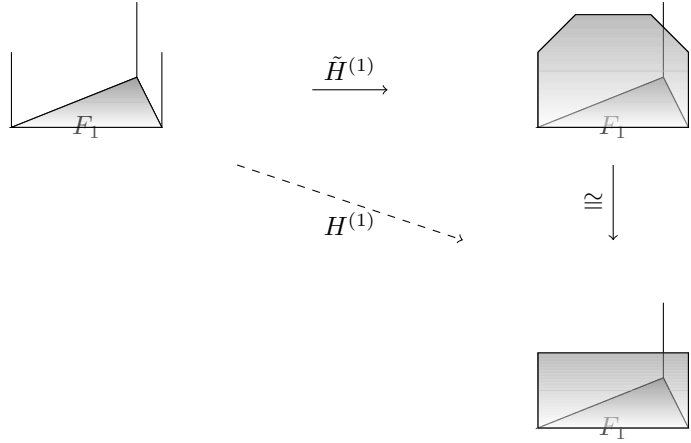
    
        In summary, we can take $\Psi_i := H^{(p)}_i(\cdot, 1)$ and $\bar\varepsilon(\Sigma):=\eta(\Sigma)/(2\cdot a_{p+1}(\Sigma))$. It follows immediately that all the conditions hold.
    \end{proof}
    
    Now, we are able to prove our first Morse index upper bound theorem.

    \begin{proof}[Proof of Theorem \ref{thm:main}]
        For any min-max sequence $\set{\Phi_i}$ for $p$-width, by Theorem \ref{Thm:deform}, there exists a new min-max sequence $\set{\Psi_i}$ such that $\mathbf{C}(\set{\Psi_i})\cap \mathcal{APR}_p \subset \mathcal{S}(\omega_p)$. Therefore, the Almgren-Pitts theory generates a minimal hypersurface with optimal regularity $V \in \mathbf{C}(\set{\Psi_i}) \cap \mathcal{APR}_p \subset \mathcal{S}(\omega_p)$, i.e., $\mathrm{index}(\mathrm{spt}(V)) \leq p$.
    \end{proof}

    In addition, combined with \cite[Theorem 4.7]{marques_morse_2018-1}, we can obtain a slightly stronger result.
    
    \begin{theorem}\label{thm:crit}
        Suppose that $(M^{n+1}, g)$ is a closed Riemannian manifold with ${n+1\geq3}$, then for any $p \in \mathbb{N}^+$, there exists a min-max sequence $\set{\Phi_i} \subset \mathcal{P}_p$ such that
        \begin{equation}
            \mathbf{C}\set{\Phi_i} \subset \mathcal{S}(\omega_p)\,.
        \end{equation}
    \end{theorem}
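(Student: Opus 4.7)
The plan is to combine the Deformation Theorem A (Theorem \ref{Thm:deform}) with the Marques--Neves meta-theorem \cite[Theorem 4.7]{marques_morse_2018-1}, which promotes an existence-type statement about a single critical element into a statement about the entire critical set of a min-max sequence.

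Specialized to the Almgren--Pitts setting, the meta-theorem asserts the following: if, for every min-max sequence $\set{\Phi_i} \subset \mathcal{P}_p$, one can construct a homotopic min-max sequence $\set{\Psi_i}$ and a positive function $\bar\varepsilon$ on a closed set $\mathcal{U} \subset \V_n(M)$ such that, for each $\Sigma \in \mathcal{U}$, eventually $|\Psi_i|(X_i) \cap \mathbf{B}^\F_{\bar\varepsilon(\Sigma)}(\Sigma) = \emptyset$, then there exists a min-max sequence $\set{\Phi^\star_i} \subset \mathcal{P}_p$ whose entire critical set lies in $\mathcal{APR}_p$ and is disjoint from $\mathcal{U}$. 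Setting $\mathcal{U} = \mathcal{U}(\omega_p)$, the hypothesis is precisely the conclusion of Theorem \ref{Thm:deform}. By Lemma \ref{lem:union_compact}, $\mathcal{U}(\omega_p)$ decomposes as a countable union of compacts $\mathcal{U}_m(\omega_p)$, and one checks from \textbf{Step 2} of the proof of Theorem \ref{Thm:deform} that $\bar\varepsilon$ has a positive layerwise lower bound on each $\mathcal{U}_m(\omega_p)$ (inherited from the bound $\tilde\eta_m$ on $\eta$); this is the uniformity required to run a diagonal argument over $m$ inside the meta-theorem. The conclusion then reads $\mathbf{C}(\set{\Phi^\star_i}) \subset \mathcal{APR}_p \setminus \mathcal{U}(\omega_p) = \mathcal{S}(\omega_p)$, which is Theorem \ref{thm:crit}.

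The main obstacle, which is internal to \cite[Theorem 4.7]{marques_morse_2018-1} and may reasonably be black-boxed here, is that forcing \emph{every} element of the critical set to lie in $\mathcal{APR}_p$ — as opposed to the usual existence of one such element provided by the standard min-max theorem — requires running the pull-tight, discretization, Almgren interpolation, and Pitts combinatorial steps uniformly across the sweepout while keeping the $\F$-perturbation at every stage strictly smaller than $\bar\varepsilon$ on the current layer $\mathcal{U}_m(\omega_p)$; otherwise, cycles could be pushed back into $\mathbf{B}^\F_{\bar\varepsilon(\Sigma)}(\Sigma)$ and the avoidance property from Theorem \ref{Thm:deform} would be destroyed. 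Reconciling the scale of the combinatorial perturbation with the scale of the avoidance radius on each layer is the technical core of the meta-theorem, and once that compatibility is in hand the rest is essentially bookkeeping.
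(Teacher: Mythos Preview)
Your proposal is correct and follows the same two-step strategy as the paper: apply Theorem~\ref{Thm:deform} to secure avoidance of $\mathcal{U}(\omega_p)$, then invoke \cite[Theorem~4.7]{marques_morse_2018-1} with a diagonal argument to force the critical set into $\mathcal{APR}_p$. The paper's version is marginally simpler in that it uses the containment $\mathbf{C}(\{\Psi_i\}) \subset \mathbf{C}(\{\Phi_i\})$ produced by Theorem~4.7 directly (diagonalizing over the parameters $R_j = \delta_j = \bar\delta_j = 4^{-j}$), so the layerwise lower bound on $\bar\varepsilon$ and the diagonal over $m$ you describe are not actually needed --- the critical-set containment already transports the disjointness from $\mathcal{U}(\omega_p)$, and there is no danger of cycles being ``pushed back'' at the level of $\mathbf{C}$.
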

    \begin{proof}
        By the Deformation Theorem A (Theorem \ref{Thm:deform}) above, it suffices to show that given any pulled-tight min-max sequence $\set{\Phi_i}$ for $p$-width, we can obtain a homotopic min-max sequence $\set{\Psi_i}$ such that $\L(\set{\Psi_i}) = \L(\set{\Phi_i})$, and $\mathbf{C}(\set{\Psi_i}) \subset \mathbf{C}(\set{\Phi_i}) \cap \mathcal{APR}_p$.
    
        In \cite[Theorem~4.7]{marques_morse_2018-1}, for any fixed $j$, we can choose $R, \delta, \overline\delta$ to be  $R_j = \delta_j = \overline \delta_j = 4^{-j}$, and then we could always find a sequence $\{\Psi^j_i\}$, such that the sequence $\set{\Psi^j_i}$ satisfies all the conclusions with $\mathcal{APR}_p$ in place of $\mathcal{W}_L$, since $\mathcal{APR}_p$ is compact. Applying the diagonal method, we can obtain a desired $\set{\Psi_i}$ from $\set{\Psi^j_i}$. 
    \end{proof}

    Analogously, we can obtain a Morse index upper bound for the $\mathcal{A}^c$ functional.

    \begin{theorem}\label{thm:CMC_index}
        Given $(M^{n + 1}, g)\,(n \geq 2)$ a Riemannian manifold, $c > 0$ and $\delta > 0$, if the $(X, Z)$-homotopy class $\Pi(\Phi_0)$ of $\Phi_0: X^k \rightarrow \C(M)$ satisfies
            \begin{equation}
                \L^c(\Pi(\Phi_0)) > \sup_{x \in Z} \A^c(\Phi_0(x))\,,
            \end{equation}
        then there exists a $\Omega \in \C(M)$ such that $\A^c(\Omega) = \L^c(\Pi(\Phi_0))$ and $\partial \Omega$ is a $c$-CMC hypersurface with optimal regularity and Morse index upper bound $k$.

        In addition, the same conclusion also holds for the restrictive $(X, Z)$-homotopy class $\Pi^\delta_c(\Phi_0)$ of $\Phi_0$ for $\A^c$ with an upper bound $\delta$, provided that
            \begin{equation}
                \L^c(\Pi^\delta_c(\Phi_0)) > \sup_{x \in Z} \A^c(\Phi_0(x))\,.
            \end{equation}
    \end{theorem}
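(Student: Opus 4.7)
The strategy is to run the same hierarchical-deformation argument that proved Theorem \ref{thm:main}, with the mass functional $\M$ replaced throughout by $\A^c$ and the space of cycles $\Z_n(M^{n+1};\F;\Zn_2)$ replaced by $(\C(M), \F)$. Since $k$-unstability has already been formulated for critical points $\Omega$ of $\A^c$ in a parallel fashion (a smooth $\set{F_v}_{v\in\overline{B}^k}$ on which $A^{\Omega'}_c(v)=\A^c(F_v(\Omega'))$ has a unique interior maximum and is uniformly concave with constant $c_0$), Lemmas \ref{lem:tech}, \ref{lem:grad} and \ref{lem:dist_F} transfer verbatim after substituting $A^V$ by $A^{\Omega'}_c$. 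This yields the $\A^c$-analogues of Lemmas \ref{lem:mod} and \ref{lem:homotopy}, producing a homotopy $H^{\Theta,\delta}_{\lambda,K}:X^l\times[0,1]\to\C(M)$ which decreases $\A^c$ by at least a uniform $\tilde c(\lambda,K)>0$ while raising it by at most $\delta$ at every intermediate time.

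With these building blocks in place, I would reproduce the four-step construction of Deformation Theorem A (Theorem \ref{Thm:deform}) applied to a pulled-tight min-max sequence $\set{\Phi_i}\subset\Pi(\Phi_0)$ for $\L^c(\Pi(\Phi_0))$. Step 1 exhausts the $(k+1)$-unstable part of the $\A^c$-critical set by countably many $\F$-balls $\mathbf{B}^\F_{\varepsilon_j}(\Omega_j)$ (using compactness of $c$-CMC hypersurfaces with bounded index and area); Step 2 builds the cut-off function $\eta$; Step 3 provides the quantitative "cannot re-enter" estimate via Lemma \ref{lem:grad}; Step 4 glues the local homotopies skeleton-by-skeleton over $X$, which terminates after $\dim X=k$ stages. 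The separation hypothesis $\L^c(\Pi(\Phi_0))>\sup_{x\in Z}\A^c(\Phi_0(x))$ is used to restrict every deformation to the open set $\set{x\in X:\A^c(\Phi_i(x))>\sup_Z\A^c(\Phi_0)+\eta_0}$ via a standard cut-off in the parameter space, which keeps the deformed sequence $\set{\Psi_i}$ inside $\Pi(\Phi_0)$ and preserves the asymptotic boundary condition on $Z$. Applying the min-max theorem for $\Pi(\Phi_0)$ to $\set{\Psi_i}$ produces an $\Omega\in\C(M)$ with $\A^c(\Omega)=\L^c(\Pi(\Phi_0))$ whose associated $c$-CMC hypersurface $\partial\Omega$ is not $(k+1)$-unstable, hence has Morse index at most $k$.

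For the restrictive class $\Pi^\delta_c(\Phi_0)$ the adaptation is essentially automatic, since the hierarchical deformation is engineered precisely to obey
\begin{equation}
    \A^c(H^{\Theta,\delta_i}_{\lambda,K}(x,t))\leq \A^c(\Theta(x))+\delta_i
\end{equation}
pointwise; choosing all auxiliary parameters $\delta_i$ below $\delta$ keeps $\set{\Psi_i}$ inside $\Pi^\delta_c(\Phi_0)$, so Theorem \ref{thm:rest_c} closes the argument. The main technical obstacle will be the bookkeeping of Step 3 of Theorem \ref{Thm:deform}: one must verify that the constants $\varepsilon_q(\Omega), a_q(\Omega)$ derived from the gradient-flow decay estimate of Lemma \ref{lem:grad} remain uniformly controlled on each compact stratum $\mathcal{U}_m$ of the critical set after $\M$ is traded for $\A^c$. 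This is routine because $\A^c=\M|_{\partial\cdot}-c\mathcal{H}^{n+1}(\cdot)$ differs from $\M$ on $\partial\Omega$ only by the lower-order volume term, which is Lipschitz in the $\F$-metric on $\C(M)$, so the compactness inputs to Claims 1--4 of Step 3 of Theorem \ref{Thm:deform} go through unchanged.
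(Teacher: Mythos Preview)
Your proposal is correct and matches the paper's approach: the paper does not give a separate proof of Theorem~\ref{thm:CMC_index} at all, but simply prefaces the statement with ``Analogously, we can obtain a Morse index upper bound for the $\mathcal{A}^c$ functional,'' indicating that the entire machinery of Lemmas~\ref{lem:tech}--\ref{lem:homotopy} and the four-step Deformation Theorem~A carries over after replacing $\M$ by $\A^c$, $\Z_n$ by $\C(M)$, and handling the relative boundary $Z$ via the gap hypothesis. You have spelled out exactly this transfer, including the observation that the restrictive class is automatic because the hierarchical deformations raise $\A^c$ by at most $\delta_i$.
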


\section{Construction of \texorpdfstring{$c$}{c}-CMC hypersurfaces and improved Morse Index Bound}
    
    \subsection{Approximation by \texorpdfstring{$c$}{c}-CMC hypersurfaces}

        It is well-known that a smooth minimal hypersurface with a positive Jacobi field must be stable. In fact, it is true even for a minimal hypersurface with optimal regularity.

        \begin{lemma}\label{lem:CMC_compact}
            Suppose that $\Sigma^n \subset (M^{n + 1}, g)$ is an oriented minimal hypersurface with optimal regularity, whose unit normal in the regular part is chosen as $\nu$. If there exists a smooth positive solution $u$ to the Jacobi equation defined on $\Sigma \backslash S$, where $S \supset \mathrm{Sing}(\Sigma)$ is a closed subset of codimension no less than $7$, i.e.,
            \begin{equation}
                L_{\Sigma^n} u := - \Delta u - \left(\mathrm{Ric}^M(\nu, \nu) + |A_\Sigma|^2\right) u = 0\quad \text{in }\Sigma \backslash S\,,
            \end{equation}
            then $\Sigma$ is stable.
        \end{lemma}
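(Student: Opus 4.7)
The plan is to adapt the Fischer-Colbrie-Schoen substitution $\varphi = u\psi$ to the present singular setting and then translate the resulting integral stability inequality into the absence of $1$-unstability. Specifically, I aim to prove
\[
    Q_\Sigma(\varphi) := \int_{\Sigma_{\mathrm{reg}}} \left(|\nabla \varphi|^2 - \left(\mathrm{Ric}^M(\nu,\nu) + |A_\Sigma|^2\right)\varphi^2\right) d\mathcal{H}^n \geq 0
\]
for every test $\varphi$ arising from a smooth vector field on $M$. For $\varphi \in C^1_c(\Sigma \setminus S)$, set $\psi := \varphi / u$, which is well-defined and compactly supported in $\Sigma \setminus S$ since $u > 0$ there. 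The expansion
\[
    |\nabla \varphi|^2 = u^2 |\nabla \psi|^2 + \nabla u \cdot \nabla(u\psi^2),
\]
followed by integration by parts (no boundary contribution by compact support away from $S$) and the Jacobi equation $L_\Sigma u = 0$, gives
\[
    \int_\Sigma \nabla u \cdot \nabla(u\psi^2)\, d\mathcal{H}^n = -\int_\Sigma u \psi^2 \Delta u\, d\mathcal{H}^n = \int_\Sigma \left(\mathrm{Ric}^M(\nu,\nu) + |A_\Sigma|^2\right)\varphi^2\, d\mathcal{H}^n,
\]
so $Q_\Sigma(\varphi) = \int u^2 |\nabla \psi|^2 \geq 0$ on $C^1_c(\Sigma \setminus S)$.

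To upgrade this to arbitrary $\varphi$, I would use a capacity cutoff. Because $\dim_{\mathcal{H}}(S) \leq n - 7 < n - 2$, the $W^{1,2}$-capacity of $S$ inside $\Sigma$ vanishes, so there exist Lipschitz cutoffs $\eta_\epsilon : \Sigma_{\mathrm{reg}} \to [0, 1]$ with $\eta_\epsilon = 0$ in a neighborhood of $S$, $\eta_\epsilon = 1$ outside an $\epsilon$-neighborhood of $S$, and $\int_\Sigma |\nabla \eta_\epsilon|^2 \to 0$. Applying the inequality to $\eta_\epsilon \varphi$ yields
\[
    \int \left(\mathrm{Ric}^M(\nu,\nu) + |A_\Sigma|^2\right)(\eta_\epsilon \varphi)^2\, d\mathcal{H}^n \leq \int |\nabla(\eta_\epsilon \varphi)|^2\, d\mathcal{H}^n.
\]
The right side converges to $\int |\nabla \varphi|^2$ by Cauchy-Schwarz, using that $\varphi$ is bounded and $\int |\nabla \eta_\epsilon|^2 \to 0$. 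Since $\mathrm{Ric}^M$ is bounded on $M$, monotone convergence on the nonnegative $|A_\Sigma|^2 \varphi^2$ simultaneously establishes integrability $\int |A_\Sigma|^2 \varphi^2 < \infty$ and yields $Q_\Sigma(\varphi) \geq 0$ in the limit.

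Finally, suppose for contradiction that $\Sigma$ is $1$-unstable with data $\set{F_v}_{v \in \overline{B}^1}$. Setting $X := \frac{d}{dv} F_v|_{v = 0}$ and $\varphi := X \cdot \nu$, the classical second variation formula on $\Sigma_{\mathrm{reg}}$ identifies
\[
    \left. \frac{d^2}{dv^2}\right|_{v=0} \|(F_v)_\# \Sigma\|(M) = Q_\Sigma(\varphi),
\]
tangential components of $X$ contributing nothing by minimality. The stability inequality just proved gives $D^2 A^\Sigma(0) \geq 0$, contradicting the requirement $D^2 A^\Sigma(0) \leq -c_0\, \mathrm{Id}$ in the definition of $1$-unstability; thus $\Sigma$ is stable.

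The main obstacle in this plan is the clean handling of the singular set: the Fischer-Colbrie identity must be carried out on functions compactly supported in $\Sigma \setminus S$, since otherwise the integration by parts picks up uncontrolled terms along $S$ and $|A_\Sigma|^2$ may not be a priori locally integrable. Both difficulties are resolved by performing the substitution first on $C^1_c(\Sigma \setminus S)$ and only afterwards passing to the limit through the capacity cutoff, reading the $L^2$-integrability of $|A_\Sigma|^2\varphi^2$ out of the inequality itself — precisely what the codimension bound $\geq 7 > 2$ on $S$ permits.
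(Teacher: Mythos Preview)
Your proof is correct and follows essentially the same route as the paper: both use the Fischer--Colbrie--Schoen substitution $\varphi = u\psi$ to obtain $Q_\Sigma(\varphi) = \int u^2|\nabla\psi|^2 \geq 0$ for test functions compactly supported away from $S$, combined with the fact that $\mathrm{codim}\,S \geq 7$ makes this class sufficient. The paper packages the argument by contradiction (instability would yield a destabilizing $f \in C^\infty_c(\Sigma\setminus S)$, immediately contradicted by the substitution), whereas you prove the inequality directly and then pass through the capacity cutoff and the link to $1$-unstability explicitly; these are equivalent organizations of the same idea.
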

        \begin{remark}
            Although $u$ is smooth in $\Sigma \backslash S$, $u$ might be unbounded near $S$.
        \end{remark}
        \begin{proof}
            Suppose not and $\Sigma$ is unstable, since $\dim S \leq n - 7$, there exists a smooth function $f$ compactly supported in $\Sigma \backslash S$ satisfying the following unstability inequality,
            \begin{equation}\label{ineq:stab}
                \int_\Sigma |\nabla f|^2 - \left(\mathrm{Ric}^M(\nu, \nu) + |A_\Sigma|^2\right) f^2 < 0\,.
            \end{equation}
            
            Note that $u$ is smooth, bounded and positive in $\mathrm{spt}\,f$, we may write $f$ as $g \cdot u$, where $g$ is compactly supported in $\Sigma \backslash S$ and thus obviously non-constant. Therefore, the left hand side is 
            \begin{equation}
                \begin{aligned}
                    &\int_\Sigma |\nabla (gu)|^2 - \left(\mathrm{Ric}^M(\nu, \nu) + |A_\Sigma|^2\right) (gu)^2\\
                    =& \int_\Sigma - \Delta(gu)\cdot gu - \left(\mathrm{Ric}^M(\nu, \nu) + |A_\Sigma|^2\right) (gu)^2\\
                    =& \int_\Sigma -\Delta g\cdot gu^2 - 2 \nabla g \cdot \nabla u \cdot gu - gu^2\left(-\Delta u - \left(\mathrm{Ric}^M(\nu, \nu) + |A_\Sigma|^2\right) u\right)\\
                    =& \int_\Sigma |\nabla g|^2 \cdot u^2 > 0\,,
                \end{aligned}
            \end{equation}
            which gives a contradiction to the inequality (\ref{ineq:stab}) above.
        \end{proof}

        In X. Zhou's multiplicity one theorem for sweepouts of boundaries (\cite[Theo\-rem~4.1]{zhou_multiplicity_2019}), he used two-sided PMC hypersurfaces to approximate minimal hypersurfaces to exclude higher-multiplicity components. Here, we will approximate minimal hypersurfaces by $c$-CMC hypersurfaces instead.

        \begin{proposition}\label{prop:compact}
            Given a sequence of two-sided, multiplicity one $c_i$-CMC hypersurfaces $V^i$ as the boundary of some Cacciopoli set with optimal regularity and Morse index upper bound $k$ in $(M, g)$, if $c_i \searrow 0$ and $\|V^i\|(M) \rightarrow A \in (0, \infty)$, then there exists a minimal hypersurface $V$ with optimal regulariy, whose support is a disjoint union of connected minimal hypersurfaces $\set{\Sigma_j}_{j=1,\cdots, l_k}$ with multiplicities $\set{m_j}$, such that, up to a subsequence,
            \begin{equation}
                V^i \rightharpoonup V\,,
            \end{equation}
            and thus $\|V\|(M) = A$.

            Moreover, every component of $V$ with multiplicity greater than $2$ is stable and
            \begin{equation}
                \sum_{m_j \leq 2} \mathrm{index}(\Sigma_j) \leq k\,.
            \end{equation}
        \end{proposition}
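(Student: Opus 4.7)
The plan is to first extract a subsequential varifold limit via a compactness result for CMC hypersurfaces with bounded area and bounded Morse index (in the spirit of Sharp's compactness \cite{sharp_compactness_2015}, adapted to the CMC setting in the style of \cite{dey_compactness_2019}). Because $c_i \searrow 0$ and $\|V^i\|(M) \to A$, the first-variation bound $\|\delta V^i\| \leq c_i \|V^i\|$ forces any subsequential varifold limit $V$ to be stationary. Bounded Morse index plus Schoen--Simon-type curvature estimates away from a finite blow-up set, together with the constancy theorem applied on each connected component of the regular part, then yield the decomposition $V = \sum_j m_j[\Sigma_j]$ with each $\Sigma_j$ a connected minimal hypersurface of optimal regularity and $m_j \in \mathbb{N}^+$.

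Next fix a component $\Sigma_j$ with multiplicity $m_j \geq 2$. Off the finite blow-up set and the codimension-$7$ singular set, $V^i$ decomposes locally into $m_j$ ordered sheets $u^i_1 < \cdots < u^i_{m_j}$ in Fermi coordinates over $\Sigma_j$. Because $V^i = \partial[\Omega^i]$ is the oriented boundary of a Caccioppoli set with mean curvature $c_i$ pointing out of $\Omega^i$, adjacent sheets carry opposite unit normals, so expansion of the CMC equation to first order reads
\begin{equation*}
L_{\Sigma_j} u^i_k = \varepsilon_k c_i + o_i(1),
\end{equation*}
where $\varepsilon_k \in \{+1,-1\}$ alternates in $k$. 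For $m_j \geq 3$ the gaps $v^i_k := u^i_{k+1} - u^i_k > 0$ satisfy $L_{\Sigma_j} v^i_k = \pm 2 c_i + o_i(1)$ with alternating signs, hence $w^i := v^i_1 + v^i_2 > 0$ satisfies $L_{\Sigma_j} w^i = o_i(1)$. Normalising $w^i$ by its supremum on a compact exhausting subset of $\Sigma_j \setminus \mathrm{sing}(\Sigma_j)$ and passing to a subsequential limit via standard elliptic estimates yields a non-negative, non-trivial smooth $w$ on the regular part with $L_{\Sigma_j} w = 0$. The strong maximum principle upgrades $w > 0$, and Lemma \ref{lem:CMC_compact} then forces $\Sigma_j$ to be stable. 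The argument fails at $m_j = 2$ because the single gap $v^i_1$ obeys $L_{\Sigma_j} v^i_1 = \pm 2 c_i + o_i(1)$ with a source term that cannot be neutralised, which is precisely why the proposition restricts stability to $m_j \geq 3$.

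Finally, for the Morse index inequality, I would pick pairwise disjoint small $\F$-neighbourhoods of the components $\Sigma_j$ with $m_j \leq 2$ and lift destabilising normal variations on each $\Sigma_j$ to $V^i$. For $m_j = 1$ a variation $\phi \nu_{\Sigma_j}$ transfers directly to the single nearby sheet. For $m_j = 2$ the ambient vector field $\phi \nu_{\Sigma_j}$ acts as $\epsilon_k \phi$ relative to each sheet's own orientation, giving a total second-variation contribution $2\,II_{\Sigma_j}(\phi,\phi) + o_i(1)$, still strictly negative for large $i$ when $II_{\Sigma_j}(\phi,\phi) < 0$. Because the neighbourhoods are pairwise disjoint, these lifted variations span a subspace of dimension $\sum_{m_j \leq 2} \mathrm{index}(\Sigma_j)$ on which the second variation of $\A^{c_i}$ at $V^i$ is negative, and combining with $\mathrm{index}(V^i) \leq k$ yields the inequality. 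The main technical obstacle is performing the sheet decomposition and lifting the test functions cleanly across $\mathrm{sing}(\Sigma_j)$ and the finite curvature-blow-up set; this is handled by cutoff arguments exploiting the vanishing $\mathcal{H}^{n-1}$-capacity of these negligible sets, in the same spirit as the proof of Lemma \ref{lem:CMC_compact}.
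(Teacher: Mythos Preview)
Your approach is essentially the same as the paper's: the key function $w^i = v^i_1 + v^i_2 = u^i_3 - u^i_1$ is exactly the height difference the paper uses (sheets $1$ and $3$ carry the same outward normal, so the $c_i$ contributions cancel), and the conclusion via a positive limiting Jacobi field and Lemma~\ref{lem:CMC_compact} is identical.

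Two points to tighten. First, your remainder $o_i(1)$ must in fact be $o(w^i)$, i.e.\ small \emph{relative to the height difference}, not merely tending to zero absolutely; otherwise dividing by $\sup_K w^i \to 0$ in the normalisation step does not yield $L_{\Sigma_j}\tilde w^i \to 0$. The paper records this correctly as $L_{\Sigma_j}(u^3_i - u^1_i) + o(u^3_i - u^1_i) = 0$, which follows because the linearisation error for the difference of two graphs is controlled by the product of $\|u^3_i - u^1_i\|_{C^1}$ and $\|u^\cdot_i\|_{C^1}$. Second, you do not treat the case where $\Sigma_j$ is one-sided; the paper handles this by passing to the two-sided double cover $\tilde\Sigma_j$ and running the same argument there.

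For the index inequality the paper takes a shortcut you may prefer: the cited CMC compactness results (\cite{bellettiniCurvatureEstimatesSheeting2019}, \cite{sharp_compactness_2015}) already yield $\mathrm{index}(\mathrm{spt}\,V) = \sum_j \mathrm{index}(\Sigma_j) \leq k$, from which $\sum_{m_j \leq 2} \mathrm{index}(\Sigma_j) \leq k$ is immediate. Your explicit lifting of destabilising variations is essentially what underlies those compactness proofs, so it is correct but can be replaced by a citation.
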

        \begin{proof}
            By the compactness of CMC hypersurfaces (See \cite{bellettiniCurvatureEstimatesSheeting2019}, \cite{sharp_compactness_2015}), we know that the limit is a minimal hypersurface $V$ with optimal regularity, and the support has Morse index upper bound $k$.

            It suffices to show that if $m_j \geq 3$, then the corresponding component $\Sigma_j$ is stable. For such $\Sigma_j$, we will consider two cases, i.e., either $\mathrm{reg}(\Sigma_j)$ is $2$-sided or $1$-sided.

            \textbf{Case 1}: If $\Sigma_j$ is $2$-sided, by the sheeting theorem in \cite{bellettiniCurvatureEstimatesSheeting2019} and \cite{sharp_compactness_2015}, we know that outside at most $k$ points and the singular set of $\Sigma_j$, there exists an exhaustion by compact domains $\set{U_i \subset \Sigma_j}$ and small neighborhoods $\tilde U_i$ of $U_i$ in the image of the exponential map of its normal bundle with a fixed unit normal, such that, up to a subsequence, the inverse image of $\mathrm{spt}(V^i) \cap \tilde U_i$ as a smooth multi-graph graphically converges to $\Sigma_j$. Note that the number of the graphs is determined by the multiplicity $m_j$. Therefore, $\mathrm{spt}(V^i) \cap \tilde U_i$ can be written as a set of $m_j$-normal graphs $\set{u^1_i, u^2_i, \cdots, u^{m_j}_i: u^\cdot_i \in C^\infty(U_i)}$, and
            \begin{equation}
                u^1_i \leq u^2_i \leq \cdots \leq u^{m_j}_i,
            \end{equation}
            where $u^\cdot_i \rightarrow 0$ in the smooth topology as $i \rightarrow \infty$.

            Since $V_i$ is the boundary of some Cacciopoli set, it follows immediately from the Constancy theorem that these graphs have alternate unit outer normal. In particular, $m_j \geq 3$ implies that there exists two graphs $u^1_i$ and $u^3_i$ whose unit outer normal both pointing upwards or downwards w.r.t. the unit normal over $U_i$.

            Following the proof of \cite[Theorem~4.1]{zhou_multiplicity_2019}, we can obtain a similar equation as $(4.3)$ therein, i.e.,
            \begin{equation}
                L_{\Sigma_j}(u^3_k - u^1_k) + o(u^3_k - u^1_k) = 0.
            \end{equation}
            
            Fix a point $p \in U_1$ and the Strong Maximum Principle \cite[Lemma~3.12]{zhou_multiplicity_2019} indicates that $u^3_k - u^1_k > 0$. Thus, if we define $h^k(x) := (u^3_k(x) - u^1_k(x))/(u^3_k(p) - u^1_k(p))$, a standard Harnack inequality will lead to the limit $h(x) > 0$ defined over $\Sigma_j$ outside at most $k$ points and the singular set of $\Sigma_j$, where the convergence is taken in the smooth topology. 

            In particular, $h(x)$ is a positive ``Jacobi field'' as in Lemma \ref{lem:CMC_compact}. It follows immediately that $\Sigma_j$ is stable.

            \textbf{Case 2}: If $\Sigma_j$ is $1$-sided, we can follow part 8 in the proof of \cite[Theorem~4.1]{zhou_multiplicity_2019} by considering a $2$-sided double cover $\tilde \Sigma_j$ of $\Sigma_j$. The proof in \textbf{Case 1} applied to $\tilde \Sigma_j$ implies that $\tilde \Sigma_j$ is stable and therefore $\Sigma_j$ is stable by simple lifting of vector fields.
        \end{proof}

    \subsection{Approximating \texorpdfstring{$c$}{c}-CMC hypersurfaces}
        
        For each fixed $p \in \mathbb{N}^+$, let's fix a pulled-tight min-max sequence $S = \set{\Phi_i}\subset \mathcal{P}_p$, satisfying that
        \begin{equation}
            \lim_{i\rightarrow \infty} \sup_x\set{\M(\Phi_i(x))} = \omega_p,
        \end{equation}
        and $\sup_x\set{\M(\Phi_i(x))} \leq \omega_p + 1/i$.

        Without loss of generality, we can always assume that $X_i := \mathrm{dmn}(\Phi_i)$ is of dimension $p$, since otherwise we can simply restrict $\Phi_i$ to its $p$-skeleton. Moreover, by Theorem \ref{thm:crit}, we can also assume that any $V \in \mathbf{C}(S)$ is a minimal hypersurface with optimal regularity.

        \subsubsection{Construction of Restrictive \texorpdfstring{$\F$}{F}-homotopy Families}$\,$\\

        For each fixed $i \in \mathbb{N}^+, l \in \set{0, 1, \cdots, p}$, let's define $X^{(l)}_i$ to be the $l$-skeleton of $X_i$ and  $\Phi^{(l)}_i := \Phi_i|_{X^{(l)}_i}$. The corresponding \textbf{restrictive $\F$-homotopy family} $\Pi^{(l)}_i$ for $\Phi^{(l)}_i$ is defined as
        \begin{equation}
            \begin{aligned}
                \Pi^{(l)}_{i} = \set{\Psi:X^{(l)}_i \rightarrow \Z_n(M;\F;\Zn_2)| \exists &H: X^{(l)}_i \times [0,1] \rightarrow \Z_n(M;\F;\Zn_2)\,, \\
                    & H(\cdot, 0) = \Phi^{(l)}_i\,, H(\cdot, 1) = \Psi\,,\\
                    & \sup_{x,t}\set{\M(H(x,t))} \leq \sup_x \M(\Phi_i(x)) + \frac{l}{i}}\,.
            \end{aligned}
        \end{equation}
        Thus, we can still define a min-max value $\L^{(l)}_i$ for each $\Pi^{(l)}_i$ by
        \begin{equation}
            \L^{(l)}_{i} := \inf_{\Psi \in \Pi^{(l)}_i} \sup_x \M(\Psi(x)) \leq \sup_x\M(\Phi_i(x)).
        \end{equation}

        Since $\L^{(l)}_{i}$ is well-defined for each $i$ and $l$, one can also define
        \begin{equation}
            \L^{(l)} = \liminf_{i} \L^{(l)}_{i},
        \end{equation}
        and it is easy to see that for each $l$, $\L^{(l)} \leq \omega_p$ and $\L^{(p)} = \omega_p$.

        \begin{lemma}\label{lem:L0}
            $\L^{(0)} < \omega_p$, provided that any minimal hypersurface with optimal regularity in $\mathcal{APR}_p$ is $1$-unstable.
        \end{lemma}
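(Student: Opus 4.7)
I will argue by contradiction: assume $\L^{(0)} = \omega_p$, and build for each large $i$ an element $\Psi_i \in \Pi^{(0)}_i$ with $\sup_x \M(\Psi_i(x)) \le \omega_p - c$ for a fixed $c > 0$, which directly contradicts $\L^{(0)} = \omega_p$. Note that the $l = 0$ case imposes the \emph{strict} constraint $\sup_{x,t}\M(H(x,t)) \le \sup_x\M(\Phi_i(x))$ (there is no additive $l/i$ slack), so the deformation cannot raise mass at any single vertex at any intermediate time.

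By the setup (invoking Theorem \ref{thm:crit}) I may assume $\mathbf{C}(S) \subset \mathcal{S}(\omega_p) \subset \mathcal{APR}_p$. Under the hypothesis, every element of $\mathcal{APR}_p$, in particular every $\Sigma \in \mathcal{S}(\omega_p)$, is $1$-unstable with some quadruple $(\varepsilon_\Sigma, c_{0,\Sigma}, \set{F^\Sigma_v}_{v \in \overline{B}^1}, m_\Sigma)$. Since $\mathcal{S}(\omega_p)$ is compact I extract a finite subcover $\set{\mathbf{B}^\F_{\varepsilon_k}(\Sigma_k)}_{k=1}^N$ and set $c_0 := \min_k c_{0,k}$. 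A diagonal contradiction then yields $\eta > 0$ and $i_0 \in \mathbb{N}^+$ such that for $i \ge i_0$, every vertex $x \in X^{(0)}_i$ with $\M(\Phi_i(x)) \ge \omega_p - \eta$ lies in some $\mathbf{B}^\F_{\varepsilon_k}(\Sigma_k)$; otherwise a subsequential limit would lie in $\mathbf{C}(S) \setminus \bigcup_k \mathbf{B}^\F_{\varepsilon_k}(\Sigma_k) = \emptyset$.

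Next I build $H_i$ vertex by vertex (continuity on the discrete $X^{(0)}_i$ is automatic, so choices at different vertices need not be consistent). On low-mass vertices $H_i(x,\cdot) \equiv \Phi_i(x)$. On a high-mass vertex $x$ with $\Phi_i(x) \in \mathbf{B}^\F_{\varepsilon_k}(\Sigma_k)$ I set $H_i(x, t) := (F^k_{t v_0(x)})_\# \Phi_i(x)$, where $v_0(x) \in \set{-1, +1}$ is chosen opposite to $m_k(\Phi_i(x))$ (either sign if $m_k(\Phi_i(x)) = 0$). Using the strict concavity of the one-variable function $A^{\Phi_i(x)}_k : \overline{B}^1 \to [0,\infty)$, with Hessian $\le -c_{0,k}$ and unique maximum at $m_k(\Phi_i(x))$, a short Taylor computation shows that $t \mapsto A^{\Phi_i(x)}_k(tv_0(x))$ is strictly decreasing on $[0, 1]$ with total drop at least $c_{0,k}/2$. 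Hence the pointwise mass of $H_i$ never increases (so $H_i(\cdot, 1) \in \Pi^{(0)}_i$), and $\sup_x \M(H_i(x, 1)) \le \max\set{\omega_p - \eta,\, \sup_x\M(\Phi_i(x)) - c_0/2} \le \omega_p - \min(\eta, c_0/4)$ for $i$ large, which closes the contradiction.

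The main technical obstacle is precisely the absence of slack for $l = 0$: the small mass-increasing first step of Lemma \ref{lem:homotopy} (which moves $v$ from the origin to a point $\eta$-far from $m_k$) is no longer allowed. The one-dimensional parameter space $\overline{B}^1 = [-1, +1]$ supplies a clean workaround: its two endpoints lie on opposite sides of the interior maximum $m_k(V)$, so one endpoint is always reachable from $0$ along a strictly monotonically decreasing path of $A^V_k$, without passage through or near the maximum.
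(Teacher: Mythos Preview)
Your proposal is correct and follows essentially the same approach as the paper: cover the compact critical set by finitely many $1$-unstability balls, split vertices into low-mass and high-mass, and use the concave function $A^V_k$ on $\overline{B}^1$ to strictly decrease mass at high-mass vertices. Two minor remarks: the contradiction framing is superfluous (you never use the assumption $\L^{(0)}=\omega_p$; you simply prove $\L^{(0)}\le \omega_p - c$ directly, as the paper does), and the constraint for $l=0$ is on the \emph{supremum} $\sup_{x,t}\M(H(x,t))$ rather than pointwise, so strictly speaking the homotopy may raise mass at an individual vertex provided the global sup is not exceeded---but your pointwise-nonincreasing construction is of course sufficient and cleaner. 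Your explicit observation that in the one-dimensional ball $\overline{B}^1=[-1,1]$ one may always flow from $0$ toward the endpoint opposite $m_k(V)$, so that $t\mapsto A^V_k(tv_0)$ is monotone without any preliminary $\delta$-perturbation, is a genuinely useful detail that the paper's terse ``we can decrease $\M(\Phi_i(x))$ by $c_{0,k}$'' leaves implicit.
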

        \begin{proof}[Proof of Lemma \ref{lem:L0}]
            Since $S = \set{\Phi_i}$ is pulled-tight and the compact set $\mathbf{C}(S)$ consists of $1$-unstable minimal hypersurfaces with optimal regularity, we can find a finite cover $\set{\mathbf{B}^\F_{2 \varepsilon_k}(\Sigma_k)}$ from the cover $\set{\mathbf{B}^\F_{2 \varepsilon_\Sigma}(\Sigma)}_{\Sigma \in \mathbf{C}(S)}$.

            Hence, there exists a constant $\varepsilon > 0$ such that for $i$ large enough and any $x \in X^{(0)}_{i}$, either $\M(\Phi_i(x)) < \omega_p - \varepsilon$, or $\Phi_i(x)$ is in $\mathbf{B}^\F_{2 \varepsilon_k}(\Sigma_k)$ for some $k$. In the latter case, we can decrease $\M(\Phi_i(x))$ by $c_{0, k}>0$. 

            In summary, $\L^{(0)} \leq \omega_p - \min(\varepsilon, \min_k(c_{0, k}))$. 
        \end{proof}
        
        \subsubsection{Construction of Restrictive \texorpdfstring{$(X, Z)$}{(X, Z)}-homotopy Classes}$\,$\\

        In the following, we shall assume that every stable minimal hypersurface in $\mathcal{APR}_p$ is $1$-unstable, and let $l < p$ be the largest number such that $\L^{(l)} < \omega_p$. Under these assumptions, we are going to construct $c$-CMC hypersurfaces approximating a $p$-width minimal hypersurface.

        In fact, it follows immediately from the assumptions that we can take a subsequence $\set{\L^{(l)}_{i_j}}$ such that $\lim_{j \rightarrow \infty} \L^{(l)}_{i_j} = \omega_p - c_1$ for some positive $c_1$ and in addition, $\L^{(l + 1)}_{i_j} \geq \omega_p - 1/j$. For $i_j$ large enough, we have $\L^{(l)}_{i_j} < \omega_p - \frac{1}{2}c_1$, i.e., there exists a $\Psi_{i_j} \in \Pi^{(l)}_{i_j}$ with homotopy map $H_{i_j}$, such that $\sup_x \Psi_{i_j}(x) < \omega_p - \frac{1}{2}c_1$.

        Now, for each $i_j$ with $j \geq 100/ c_1$, we are going to modify $\Phi^{(l+1)}_{i_j}$ such that we can construct a restrictive $(X, Z)$-homotopy class from it. The modified map $\Psi^{(l+1)}_{i_j}$ of $\Phi^{(l+1)}_{i_j}$ is defined as follows. 

        Firstly, on the $l$-skeleton, $\Psi^{(l+1)}_{i_j}|_{X^{(l)}_{i_j}} := \Psi_{i_j}$. Then, for each $(l+1)$-cell which is a $(l+1)$-dimensional cube and thus could be parametrized by $[-1, 1]^{l+1}$. In the interior, $\Psi^{(l+1)}_{i_j}(\frac{x}{2}) := \Phi^{(l+1)}_{i_j}(x)$ for any $x \in [-1, 1]^{l+1}$. Near the boundary, $\Psi^{(l+1)}_{i_j}(\frac{t+1}{2}x) := H_{i_j}(x,t)$ for any $x \in \partial [-1, 1]^{l+1}, t \in [0,1]$. One could easily check that $\Psi^{(l+1)}_{i_j} \in \Pi^{(l+1)}_{i_j}$ and moreover, we have
        \begin{equation}
            \omega_p - 1/j \leq \sup_x \M(\Psi^{(l+1)}_{i_j}(x)) \leq \sup_x\M(\Phi_{i_j}(x))+ \frac{l}{i_j}.
        \end{equation}

        Then, since $(\Psi^{(l+1)}_{i_j})^*(\bar \lambda) = (\Phi^{(l+1)}_{i_j})^*(\bar \lambda) \neq 0$, we lift the sweepout $\Psi_{i_j}$ to the double cover $\partial: \C(M) \rightarrow \Z(M, \mathbb{Z}_2)$. We obtain a double cover $\pi: \tilde X^{(l)}_{i_j} \rightarrow X^{(l)}_{i_j}$ and the lifting map:
        \begin{equation}
            \tilde \Psi^{(l+1)}_{i_j}: \tilde X^{(l)}_{i_j} \rightarrow (\C(M),\F)\,,
        \end{equation}
        satisfying $\partial \tilde \Psi^{(l + 1)}_{i_j} = \Psi^{(l+1)}_{i_j}\comp \pi$.

        Since $\L^{(l+1)}_{i_j} \geq \omega_p - 1/j \geq \sup \M(\Psi_{i_j})|_{X^{(l)}_{i_j}} + c_1/3$ and $X^{(l+1)}_{i_j}\backslash X^{(l)}_{i_j}$ is a union of disjoint $(l+1)$-cells, there exists a $(l + 1)$-cell $\tilde C^{(l+1)}_{i_j}$ in $\tilde X^{(l+1)}_{i_j}$ satisfying the following property. Let $\Pi^\delta_c( \tilde \Psi^{(l+1)}_{i_j}|_{\tilde C^{(l+1)}_{i_j}})$ be the associated restrictive $(\tilde C^{(l+1)}_{i_j}, \partial \tilde C^{(l+1)}_{i_j})$-homotopy class, then we have
        \begin{equation}
            \begin{aligned}
                \L^{(l+1)}_{i_j} \geq \L^c(\Pi^\delta_c( \tilde \Psi^{(l+1)}_{i_j}|_{\tilde C^{(l+1)}_{i_j}})) &\geq \L^{(l+1)}_{i_j} - c\cdot \mathrm{Vol}(M)\,\\
                    &> \sup \A^c(\tilde \Psi^{(l+1)}_{i_j})|_{\partial \tilde C^{(l+1)}_{i_j}}\,,
            \end{aligned}
        \end{equation}
        provided that $c < \frac{c_1}{10000\cdot i_j \cdot \mathrm{Vol}(M)}$ and $\delta < \frac{1}{i_j}$. Indeed, if this not true, one can construct a new map $\Psi' \in \Pi^{(l + 1)}_{i_j}$ such that $\sup \mathbf{M}(\Psi) < \L^{(l+1)}_{i_j}$, which gives a contradiction to the definition of $\L^{(l+1)}_{i_j}$.

        \subsubsection{Construction of \texorpdfstring{$c$}{c}-CMC hypersurfaces}$\,$\\

        For each $i_j$ with $j \geq 100/ c_1$, applying Theorem \ref{thm:CMC_index} on $\Pi^\delta_c( \tilde \Psi^{(l+1)}_{i_j}|_{\tilde C^{(l+1)}_{i_j}})$, we obtain a $c$-CMC minimal hypersurface $V_{i_j}$ with Morse index no greater than $p$ with $c = \frac{c_1}{100000\cdot i_j \cdot \mathrm{Vol}(M)}$ and $\delta = \frac{1}{10\cdot i_j}$. Moreover,
        \begin{equation}
            \A^c(V_{i_j}) \leq \|V_i\|(M) \leq \A^c(V_{i_j}) + c \cdot \mathrm{Vol}(M)\,.
        \end{equation}
        and thus, by the assumption of $c$, we have
        \begin{equation}
            \L^{(l+1)}_{i_j} -  \frac{c_1}{100000\cdot i_j} \leq \|V_i\|(M) \leq \L^{(l+1)}_{i_j} + \frac{c_1}{100000\cdot i_j}\,.
        \end{equation}

    \subsection{Proof of improved Morse Index Bound}\quad

        If there exists a stable minimal hypersurface with optimal regularity in $\mathcal{APR}_p$, then the conclusion holds apparently.

        Otherwise, by the construciton in the previous subsection, we know that there exists a sequence of $c_i$-CMC hypersurfaces $V_i$ with optimal regularity and Morse index upper bound $p$ satisfying that
        \begin{align}
            \|V_i\|(M) &\rightarrow \omega_p\,,\\
            c_i &\searrow 0\,.
        \end{align}
        It follows immediately from Proposition \ref{prop:compact} that there exists a $p$-width minimal hypersurface $V$ with optimal regularity, whose support is a disjoint union of connected minimal hypersurfaces $\set{\Sigma_j}_{j=1,\cdots, l_k}$ with multiplicities $\set{m_j}$ such that every component of $V$ with multiplicity greater than $2$ is stable and
            \begin{equation}
                \sum_{m_j \leq 2} \mathrm{index}(\Sigma_j) \leq p\,.
            \end{equation}

\bibliography{reference}

\begin{thebibliography}{{Ram}19}

\bibitem[Alm62]{almgren_homotopy_1962}
F.~J. Almgren.
\newblock {\em The Homotopy Groups of the Integral Cycle Groups.}
\newblock PhD thesis, 1962.
\newblock OCLC: 22016723.

\bibitem[Alm65]{almgren_theory_1965}
F.~J. Almgren.
\newblock {\em The Theory of Varifolds: A Variational Calculus in the Large for
  the k-Dimensional Area Integrand}.
\newblock {Princeton; Institute for Advanced Study}, 1965.
\newblock OCLC: 6031650.

\bibitem[BCW19]{bellettiniCurvatureEstimatesSheeting2019}
C.~Bellettini, O.~Chodosh, and N.~Wickramasekera.
\newblock Curvature estimates and sheeting theorems for weakly stable {{CMC}}
  hypersurfaces.
\newblock {\em Advances in Mathematics}, 352:133--157, August 2019.

\bibitem[Bel20]{bellettiniMultiplicity1MinmaxMinimal2020}
C.~Bellettini.
\newblock Multiplicity-1 minmax minimal hypersurfaces in manifolds with
  positive {{Ricci}} curvature.
\newblock {\em arXiv:2004.10112 [math]}, May 2020.

\bibitem[Dey19a]{dey_compactness_2019}
A.~Dey.
\newblock Compactness of certain class of singular minimal hypersurfaces.
\newblock {\em arXiv:1901.05840 [math]}, January 2019.

\bibitem[Dey19b]{deyExistenceMultipleClosed2019}
A.~Dey.
\newblock Existence of multiple closed {{CMC}} hypersurfaces with small mean
  curvature.
\newblock {\em arXiv:1910.00989 [math]}, October 2019.

\bibitem[Gas17]{gasparSecondInnerVariation2017}
P.~Gaspar.
\newblock The second inner variation of energy and the {{Morse}} index of limit
  interfaces.
\newblock {\em arXiv:1710.04719 [math]}, October 2017.

\bibitem[Gro03]{gromov_isoperimetry_2003}
M.~Gromov.
\newblock Isoperimetry of waists and concentration of maps.
\newblock {\em Geometric \& Functional Analysis GAFA}, 13(1):178--215, February
  2003.

\bibitem[Gut09]{guth_minimax_2009}
L.~Guth.
\newblock Minimax {{Problems Related}} to {{Cup Powers}} and {{Steenrod
  Squares}}.
\newblock {\em Geometric and Functional Analysis}, 18(6):1917--1987, March
  2009.

\bibitem[IMN18]{irie_density_2018}
K.~Irie, F.~C. Marques, and A.~Neves.
\newblock Density of minimal hypersurfaces for generic metrics.
\newblock {\em Annals of Mathematics}, 187(3):963--972, May 2018.

\bibitem[Li19]{li_existence_2019}
Y.~Li.
\newblock Existence of {{Infinitely Many Minimal Hypersurfaces}} in
  higher-dimensional closed manifolds with {{Generic Metrics}}.
\newblock {\em arXiv:1901.08440 [math]}, January 2019.

\bibitem[LMN18]{liokumovich_weyl_2018}
Y.~Liokumovich, F.~C. Marques, and A.~Neves.
\newblock Weyl law for the volume spectrum.
\newblock {\em Annals of Mathematics}, 187(3):933--961, May 2018.

\bibitem[MMN20]{marquesMorseInequalitiesArea2020}
F.~C. Marques, R.~Montezuma, and A.~Neves.
\newblock Morse inequalities for the area functional.
\newblock {\em arXiv:2003.01301 [math]}, March 2020.

\bibitem[MN14]{marques_min-max_2014}
F.~C. Marques and A.~Neves.
\newblock The {{Min}}-{{Max}} theory and the {{Willmore}} conjecture.
\newblock {\em Annals of Mathematics}, 179(2):683--782, March 2014.

\bibitem[MN16a]{marques_morse_2016}
F.~C. Marques and A.~Neves.
\newblock Morse index and multiplicity of min-max minimal hypersurfaces.
\newblock {\em Cambridge Journal of Mathematics}, 4(4):463--511, 2016.

\bibitem[MN16b]{marques_topology_2016}
F.~C. Marques and A.~Neves.
\newblock Topology of the space of cycles and existence of minimal varieties.
\newblock {\em Surveys in Differential Geometry}, 21(1):165--177, 2016.

\bibitem[MN17]{marques_existence_2017}
F.~C. Marques and A.~Neves.
\newblock Existence of infinitely many minimal hypersurfaces in positive
  {{Ricci}} curvature.
\newblock {\em Inventiones mathematicae}, 209(2):577--616, August 2017.

\bibitem[MN18]{marques_morse_2018-1}
F.~C. Marques and A.~Neves.
\newblock Morse index of multiplicity one min-max minimal hypersurfaces.
\newblock {\em arXiv:1803.04273 [math]}, March 2018.

\bibitem[MNS17]{marques_equidistribution_2017}
F.~C. Marques, A.~Neves, and A.~Song.
\newblock Equidistribution of minimal hypersurfaces for generic metrics.
\newblock {\em arXiv:1712.06238 [math]}, December 2017.

\bibitem[Pit81]{pitts_existence_1981}
J.~T. Pitts.
\newblock {\em Existence and {{Regularity}} of {{Minimal Surfaces}} on
  {{Riemannian Manifolds}}}.
\newblock {Princeton University Press}, 1981.

\bibitem[{Ram}19]{ramirez-lunaOrientabilityMinmaxHypersurfaces2019}
A.~{Ram{\'i}rez-Luna}.
\newblock Orientability of min-max hypersurfaces in manifolds of positive
  {{Ricci}} curvature.
\newblock {\em arXiv:1907.12519 [math]}, July 2019.

\bibitem[Sha15]{sharp_compactness_2015}
B.~Sharp.
\newblock Compactness of minimal hypersurfaces with bounded index.
\newblock {\em arXiv:1501.02703 [math]}, January 2015.

\bibitem[Sim84]{simon_lectures_1984}
L.~Simon.
\newblock {\em Lectures on Geometric Measure Theory}.
\newblock Number~3 in Proceedings of the {{Centre}} for {{Mathematical
  Analysis}} / {{Australian National University}}. {Centre for Mathematical
  Analysis, Australian National University}, Canberra, 1984.
\newblock OCLC: 12264914.

\bibitem[Son18]{song_existence_2018}
A.~Song.
\newblock Existence of infinitely many minimal hypersurfaces in closed
  manifolds.
\newblock {\em arXiv:1806.08816 [math]}, June 2018.

\bibitem[Son19]{song_dichotomy_2019}
A.~Song.
\newblock A dichotomy for minimal hypersurfaces in manifolds thick at infinity.
\newblock {\em arXiv:1902.06767 [math]}, February 2019.

\bibitem[SS81]{schoen_regularity_1981}
R.~Schoen and L.~Simon.
\newblock Regularity of stable minimal hypersurfaces.
\newblock {\em Communications on Pure and Applied Mathematics}, 34(6):741--797,
  November 1981.

\bibitem[Whi91]{white_space_1991}
B.~White.
\newblock The {{Space}} of {{Minimal Submanifolds}} for {{Varying Riemannian
  Metrics}}.
\newblock {\em Indiana University Mathematics Journal}, 40(1):161--200, 1991.

\bibitem[Zho19]{zhou_multiplicity_2019}
X.~Zhou.
\newblock On the {{Multiplicity One Conjecture}} in {{Min}}-max theory.
\newblock {\em arXiv:1901.01173 [math]}, January 2019.

\bibitem[ZZ17]{zhouMinmaxTheoryConstant2017}
X.~Zhou and J.~J. Zhu.
\newblock Min-max theory for constant mean curvature hypersurfaces.
\newblock {\em arXiv:1707.08012 [math]}, July 2017.

\bibitem[ZZ18]{zhou_existence_2018}
X.~Zhou and J.~J. Zhu.
\newblock Existence of hypersurfaces with prescribed mean curvature {{I}} -
  {{Generic}} min-max.
\newblock {\em arXiv:1808.03527 [math]}, August 2018.

\end{thebibliography}
\end{document}